\documentclass[a4paper]{amsart}

%%%%%%%%%%%%%%%%PACKAGES%%%%%%%%%%%%%%%%
\usepackage{amsmath,amsfonts,amssymb,bm}

\usepackage{cancel}
\usepackage{tikz}
\usetikzlibrary{calc}
\usetikzlibrary{positioning}
\usetikzlibrary{shapes.geometric}
\usepackage{float}
\usepackage[all]{xy}
\def\lcf{\lbrack\! \lbrack}
\def\rcf{\rbrack\! \rbrack}
\usepackage[latin1]{inputenc}
\usepackage{pstricks,pst-node}

\usepackage[numbers,sort&compress]{natbib}
\usepackage{graphicx}

%\usepackage[a4paper,bindingoffset=0.2in,%
   %         left=3cm,right=2.5cm,top=4.cm,bottom=4.cm,%
    %        footskip=.25in]{geometry}
%\usepackage[monochrome]{color}            

\usepackage{hyperref}
\hypersetup{
   colorlinks   =  true,
    linkcolor    = cyan,
    citecolor    = red,
     urlcolor	=magenta,     
%	linkbordercolor = {1 0 0},
%	citebordercolor = {1 0 0},
%	urlbordercolor = {1 1 0}
}

\usepackage{amsthm}

\newtheorem{definition}{Definition}[section]

\newtheorem{theorem}[definition]{Theorem}
\newtheorem{proposition}[definition]{Proposition}

\newtheorem{remark}[definition]{Remark}
\newtheorem{example}[definition]{Example}

           {\nolinebreak \hfill $\Box$ \end{trivlist}}

%%%%%%%%%%%%%%%%NEW COMMANDS%%%%%%%%%%%%%%%%
%\newcommand{\red}[1]{{\color{red}#1}}
%\newcommand{\blue}[1]{{\color{blue}#1}}

\def \R{\mathbb{R}}

%%%%%%%%%%%%%%%%%%%%%%%%%%%%%%%%%%%%%%%%%%%%%%%%%%%%%%%
%             STANDARD SIZE ADJUSTMENTS               %
%%%%%%%%%%%%%%%%%%%%%%%%%%%%%%%%%%%%%%%%%%%%%%%%%%%%%%%

\parskip=1ex
\oddsidemargin= -0.45cm
\evensidemargin= -0.45cm
\textheight=23.0cm

\parindent=1.5em
\textwidth=17.5cm
\topmargin=-1.5cm

%%%%%%%%%%%%%%%%%%%%%%%%%%%%%%%%%%%%%%%%%%%%%%%%%%%%
%                 DEFINITIONS                      %
%%%%%%%%%%%%%%%%%%%%%%%%%%%%%%%%%%%%%%%%%%%%%%%%%%%%

\title[Kirillov structures and reduction of Hamiltonian systems by scaling and standard symmetries]{Kirillov structures and reduction of Hamiltonian systems by scaling and standard symmetries}
\author{A. Bravetti $^1$, S. Grillo$^2$, J. C. Marrero$^3$, E. Padr\'on$^3$}
%\date{}
\thanks{AMS Mathematics Subject Classification (2020): Primary 70G45, 70G65, 70H33; Secundary 53D05, 53D10, 53D17.}
\thanks{Keywords: Hamiltonian systems, contact structures, Kirillov structures, scaling symmetry, standard symmetry, reduction process, reconstruction process. }

\begin{document}

\maketitle

\vspace{-20pt}
\begin{center}
{\small\it  $\;^1$ Instituto de Investigaciones en Matem\'aticas Aplicadas y en Sistemas, Universidad Nacional Aut\'onoma de M\'exico, A.~P.~70543, M\'exico, DF 04510, M\'exico}

{\small\it e-mail:   alessandro.bravetti@iimas.unam.mx}

{\small\it  $^2$ Instituto Balseiro, Universidad Nacional de Cuyo and CONICET
Av. Bustillo 9500, San Carlos de Bariloche
R8402AGP, Rep\'ublica Argentina}

{\small\it e-mail: sergiog@cab.cnea.gov.ar}

{\small\it $\;^3$ULL-CSIC Geometr\'{\i}a Diferencial y Mec\'anica Geom\'etrica, Departamento de Matem\'aticas, Estad\'{\i}stica e Investigaci\'on Operativa
and Instituto de Matem\'aticas y Aplicaciones (IMAULL)}\\{\small\it University of La Laguna, Spain}
\\[5pt]
{\small\it e-mail: jcmarrer@ull.edu.es, mepadron@ull.edu.es}
\end{center}

\begin{abstract} 
In this paper, we discuss the reduction of symplectic Hamiltonian systems by scaling and standard symmetries which commute. We prove that such a reduction process produces a so-called Kirillov Hamiltonian system. Moreover, we show that if we reduce first by the scaling symmetries and then by the standard ones or in the opposite order, we obtain equivalent Kirillov Hamiltonian systems. In the particular case when the configuration space of the symplectic Hamiltonian system is a Lie group $G$, which coincides with the symmetry  group, the reduced structure is an interesting Kirillov version of the Lie-Poisson structure on the dual space of the Lie algebra of $G$. We also discuss a reconstruction process for symplectic Hamiltonian systems which admit a scaling symmetry. All the previous results are illustrated in detail with some interesting examples.

\end{abstract}

\tableofcontents
%%%%%%%%%%%%%%%%%%%%%%%%%%%%%%%%%%%%%%%%%%%%%%%%%%%

\section{Introduction}\label{sec:intro}

\subsection{Physical motivation}
The analysis of symmetries is one of the most important tools in theoretical physics. 
Usually, the formulation of a physical theory is given in terms of a variational principle and its associated symplectic Hamiltonian description.
In this context, one typically looks for ``standard symmetries'', that is, symmetries which preserve the symplectic form and the Hamiltonian function.
Among other things, this approach leads to Noether's theorem and its generalization and the Marsden-Weinstein theory of reduction of the system by the action of a symmetry
group (see the classical books and monographs by Marsden and collaborators ~\cite{AM,MMR}, Libermann and Marle  ~\cite{LiMa} or Olver \cite{Olver}).

Recently, there has been a growing interest in the physical literature in considering ``non-standard symmetries'', that is, symmetries of the physical system 
that do not necessarily preserve the symplectic structure. This is motivated mainly by the so-called scaling symmetries and by a well-known philosophical argument 
according to which any minimal description of the universe should avoid introducing a global scale into the picture, that is, 
it should be scale-invariant~\cite{poincare,GrybSloan2021}.
In this context, the theory of ``shape dynamics'' aims to rephrase our best description of the universe (general relativity) in a completely scale-invariant 
fashion~\cite{Barbour,Mercati}.
This has led already to remarkable results that defy the way we understand the (classical) dynamics of the universe. For instance, 
the scale-reduced cosmological and black hole systems can be continued in some cases through the corresponding singularities~\cite{KoslowskiSloan, sloan2019, MercatiSloan}.
Moreover, it has been further argued that the apparent dissipative nature  of the scale-reduced systems may have important consequences for topics
such as the origin of the arrow of time and the formulation of quantum mechanics through unitary operators~\cite{BKM,GrybSloan2021,sloan2021}.

Interestingly, the reduction of a symplectic Hamiltonian system by a scaling symmetry produces a contact Hamiltonian system,
which have been the subject of intensive study recently for their use in the description of e.g.~dissipative, thermostatted and thermodynamic 
systems (see e.g.~\cite{BravettiMechanics,BravettiThermostatted,BravettiThermodynamics,deLeonSardon,deLeonCHS,Gaset1,Gaset2,simoes,Ghosh,Esen1,van} and
the references therein).
This intuition was first put forward in~\cite{sloan2018} and then formalized more precisely in the recent work~\cite{BJS},
%This interest has also motivated recent work in the mathematics literature. 
%In particular, in~\cite{BJS} 
where a thorough mathematical investigation 
of the role of scaling symmetries
in symplectic Hamiltonian systems has been performed. %and its relation with the contact geometry of the reduced manifold has been analyzed. 
Moreover, the relationship with the geometry of the blow-ups used in celestial mechanics has also been highlighted, together with the connection with
other geometric structures~\cite{Braddell,Miranda}.

However, so far the study of the joint reduction by scaling and standard symmetries has not been considered in depth, at least from the mathematical perspective.
Moreover, the case in which the reduced manifold is non-orientable, which seems to be the important case 
for the resolution of singularities in general relativity~\cite{KoslowskiSloan, sloan2019, MercatiSloan}, 
has been elusive of a fully-fledged mathematical description (although, see~\cite{BGG,GG,Ma}). 
Finally, from the point of view of comparing the resulting physical theories, 
it is also crucial to highlight how to reconstruct the ``original'' symplectic system from the reduced one.

In this work we perform a detailed mathematical analysis of all the above points.
To give a feeling of the objects involved in our constructions, 
in the remainder of this introduction we provide a high-level description of the most important tools and results.

\subsection{Standard Lie symmetries for Kirillov Hamiltonian systems}
A Kirillov structure on a real line bundle is a Lie algebra structure $[\cdot,\cdot]$ on the space of sections of the dual bundle such that, if we fix a section $h$ on this line bundle, the operator $[\cdot,h]$ is a derivation. Thus, every section of the dual line bundle defines a vector field on the base manifold which is called the Hamiltonian vector field associated with the section. So,  
\emph{a Kirillov Hamiltonian system} is a Kirillov structure on a real line bundle plus a section of the dual bundle (the Hamiltonian section). 

Examples of Kirillov structures may be produced from symplectic, Poisson and Jacobi structures, contact $1$-forms and contact structures (that is, distributions of corank $1$ which are maximally non-integrable). Apart from the last case, in the other previous examples the real line bundle is trivial and the sections of the dual bundle are just $C^\infty$ functions on the base manifold. Anyway, as we show in this paper, there exist interesting examples of Kirillov structures for which the real line bundle is not trivial. In particular, those in which the base space of the line bundle is the projective bundle associated with a vector bundle (for more details on Kirillov structures, see for instance, \cite{G, GMPU, GL, K, Ma}). 

On the other hand, it is well-known that dynamical systems (in particular, mechanical systems), which are invariant under the action of  a symmetry Lie group, have received a lot of attention from researchers in ma\-the\-ma\-tics and physics. For this reason, in this paper we introduce the notion of \emph{a standard Lie symmetry for a Kirillov Hamiltonian system}. It is a principal representation of a Lie group on the line bundle such that the dual representation preserves the Kirillov structure and  the Hamiltonian section is equivariant. A Lie group of symplectic (resp.~Poisson, contact or Jacobi) Hamiltonian symmetries is a particular example of a standard Lie symmetry for the corresponding Kirillov Hamiltonian system. Moreover, for a standard Lie symmetry on a Kirillov Hamiltonian system, the space of orbits of the action on the line bundle is again a line bundle. In fact, in the particular case when the Kirillov structure is Poisson (or Jacobi), we have a reduced Poisson (or Jacobi) structure. This is well-known in the theory of Poisson (or Jacobi) reduction (see, for instance, \cite{MR,Nunes}). 

\subsection{Scaling symmetries for Poisson Hamiltonian systems}

In \cite{BJS} the authors introduce the notion of a scaling symmetry for a symplectic Hamiltonian system and they exhibit several examples where such a symmetry is present
(see also~\cite{sloan2018,BravettiChung,Azuaje2023}).

The previous notion may be extended for the more general class of Poisson Hamiltonian systems as follows. 
It is a principal action $\Phi:\R^{\times}\times P\to P$ of the Lie group $\R^\times$ (with $\R^\times=\R^+$ or $\R^\times =\R-\{0\}$) on the Poisson manifold $(P,\Pi)$ such that 
$$\wedge^2T\Phi\circ \Pi=s \Pi\circ \Phi,\qquad H\circ \Phi=sH,$$ 
for all $s\in \R^\times,$ where $H:P\to \R$ is the Hamiltonian function. In the particular case when $P$ is a symplectic manifold $S$, 
it is proved in \cite{BGG,GG} that the space of orbits $C=S/\R^{\times}$ admits a contact structure. 
In addition, the homogeneous function $H$ on $S$ induces a section of the dual bundle over $C$  
to the Kirillov line bundle in such a way that we have a reduced contact Hamiltonian system (see \cite{GG,Ma}). 

\subsection{Our motivation}
As we mentioned before, many symplectic Hamiltonian systems admit scaling symmetries. 
However, they do not only admit such symmetries, typically they also have standard Lie symmetries. 
In addition, the scaling and the standard Lie symmetries usually commute. 
So, one may reduce the dynamics by both types of symmetries, and some natural questions arise: 
\begin{itemize}
\item What is the nature of the reduced system?
\item If we reduce first by the scaling symmetries and then by the standard ones, is it the same as doing it the other way around?
\item Is it possible to obtain the dynamics of the original symplectic Hamiltonian system from the dynamics of the reduced system via a suitable reconstruction process? 
\end{itemize}
In this paper, we will provide answers to these questions. 

\subsection{The results of the paper}
For a symplectic Hamiltonian system with compatible scaling and standard Lie symmetries (that is, they commute), we will develop two reduction processes: 
\begin{itemize}

\item In the first reduction process, we start with the standard symmetry and then we apply the scaling symmetry. In this case,  the first reduced system is a Poisson Hamiltonian system endowed with a scaling symmetry. The reduction of such a system by this scaling symmetry produces a Kirillov Hamiltonian system (see Theorem  \ref{2reduccion}).

\item In the second reduction process, we use the scaling symmetry and then the standard symmetry.  In this case, the first reduced system is a contact Hamiltonian system endowed with a standard Lie symmetry. The reduction of the latter  by this standard symmetry produces again a final Kirillov Hamiltonian system (see Theorem \ref{p2}). In fact, the reduction of a general Kirillov Hamiltonian system by a standard  Lie symmetry  is again a Kirillov Hamiltonian system (see Theorem \ref{Kirillov}). 

\item We also prove that the final reduced Kirillov Hamiltonian systems obtained in both processes are Kirillov equivalent (see Theorem \ref{equiv}).

\vspace{10pt}

The following diagram summarizes both reduction processes

$$\xymatrix{
*+[F]{\mbox{ \begin{tabular}{c}Symplectic \\ Hamiltonian Systems\end{tabular}}}
\ar[dd]_{\mbox{\tiny{\begin{tabular}{c}Symplectic \\ standard symmetry\end{tabular}}}}
\ar[rrr]^{\mbox{\tiny{\begin{tabular}{c}Symplectic \\ scaling symmetry\end{tabular}}}}
&&& *+[F]{\mbox{ \begin{tabular}{c}Contact \\ Hamiltonian Systems\end{tabular}}} \ar[dd]^{\mbox{\tiny{\begin{tabular}{c}Kirillov \\ standard symmetry\end{tabular}}}}  \\\\
*+[F]{\mbox{ \begin{tabular}{c}Poisson \\ Hamiltonian Systems\end{tabular}}} \ar[rrr]^{\mbox{\tiny{\begin{tabular}{c}Poisson \\ scaling symmetry\end{tabular}}}}
 &&& *+[F]{\mbox{ \begin{tabular}{c}Kirillov \\ Hamiltonian Systems\end{tabular}}}}$$

\item Using more general ideas on reconstruction processes for dynamical systems in the presence of a symmetry Lie group, we present the reconstruction of the symplectic 
(resp.~Poisson) dynamics, for a system which admits a scaling symmetry,  from the reduced contact and (resp.~Kirillov) Hamiltonian dynamics (see Section~\ref{sec:reconstruction}). 

\item All the previous constructions are applied to two examples of symplectic Hamiltonian systems which are  interesting from the physical and mathematical point of view: The 2d harmonic oscillator and standard fiberwise-linear Hamiltonian systems on cotangent bundles induced by vector fields in the configuration space.  For  this last class of examples,  when the cotangent bundle is that of a Lie group $G$, after the two reduction processes,  we obtain an interesting Kirillov structure on the projective space associated with  the dual space ${\mathfrak g}^*$ of the Lie algebra of $G.$ This Kirillov structure may be considered as the Kirillov version of the Lie-Poisson structure on ${\mathfrak g}^*.$ For this reason, it will be called the {\it Lie-Kirillov structure} (see the last part of Subsection \ref{SectionExamplesred}). The geometric nature of this structure and its applications to Hamiltonian dynamics will be discussed in a next paper in progress. We remark that a holomorphic version of the Lie-Kirillov structure has been discussed in \cite{ViWa} (see Examples 54 in \cite{ViWa}). 

\end{itemize}

\subsection{Structure of the paper}
%^{\mbox{\tiny{symplectic scaling symmetry}}}
The paper is structured as follows. In Section \ref{sec:intro}, we review some notions and properties  of contact, Poisson, Jacobi and Kirillov manifolds. At the end of the section, a diagram  illustrates the relations between these kinds of structures. 
In Section \ref{ssp}, we show the scaling reduction process of a symplectic (Poisson) Hamiltonian system. This procedure is applied to two examples: The 2d harmonic oscillator and the standard fiberwise-linear Hamiltonian systems on cotangent bundles. 
In Section \ref{section3}, we will discuss the reduction of symplectic Hamiltonian systems which are invariant under the action of a Lie group and, in addition, admit a scaling symmetry which is compatible with the standard symmetry. The reduction process starts by using first the standard symmetry and then the scaling symmetry. The  process in the other direction (the first reduction is obtained by a scaling symmetry and the second one is done using the standard symmetry) is given in Section \ref{Section4}. Moreover, in this section we present a reduction process for general Kirillov Hamiltonian systems in the presence of a standard symmetry. In Sections \ref{section3} and \ref{Section4} both processes are illustrated with the examples mentioned above. The equivalence between the reductions in both directions is proved in Section \ref{sec:equiv}. 
Finally, in Section \ref{sec:reconstruction} we study the reconstruction process by focusing our attention on the case of symplectic Hamiltonian systems with scaling symmetries.

 \section{Contact and Kirillov Hamiltonian systems}
\label{sec:intro}
%\subsection{Contact and Kirillov structures}

In this section we recall some  notions and properties of  contact, Jacobi and Kirillov manifolds (for more details see, for instance, \cite{Ar, BGG, DLM, G, GMPU, GL, K, LiMa, Lic, Ma}).

 {\it A contact $1$-form} on a $(2n+1)$-dimensional manifold $C$ is a $1$-form $\eta$ such that $\eta\wedge (d\eta)^n$ defines a volume $1$-form on $C.$  
 We remark that a manifold with a contact $1$-form is orientable and has a distinguished vector field ${\mathcal R}\in {\mathfrak X}(C)$, {\it the Reeb vector field},  
 characterized by the conditions
$$ i_ {\mathcal R}d\eta=0\qquad \mbox{ and } \qquad i_{\mathcal R}\eta=1.$$

The Reeb dynamics can be seen as the one induced by a Hamiltonian vector field on $C$. In fact,  if $H:C\to \R$ is a smooth function on $C$,  
{\it the Hamiltonian vector field $X_H^\eta\in {\mathfrak X}(C)$  of $H$} is characterized by these two conditions 
\begin{equation}\label{v-h}i_{X_H^\eta}d\eta=dH-{\mathcal R}(H)\eta\qquad \mbox{ and }\qquad \eta(X_H^\eta)=H.\end{equation}

The Reeb vector field is just the Hamiltonian vector field for the constant function $H=1.$ 

In the following example we show a  manifold endowed with a contact $1$-form obtained by a reduction process. 

\begin{example}[{\bf The spherical cotangent bundle of a Riemannian manifold}]\label{ex:STQ}\rm
Let $(Q,g)$ be an $n$-dimensional Riemannian manifold and $0_Q$ the zero section of the cotangent  bundle $\tau_Q^*:T^*Q\to Q$. 
On the open subset $T^*Q-0_Q$ of $T^*Q$, we consider the action of the multiplicative group  of the positive real numbers $\R^+$ given by 
\begin{equation}\label{action2}
\phi: \R^+\times (T^*Q-0_Q)\to  (T^*Q-0_Q), \;\;\; \phi(s,\alpha)=s\alpha,
\end{equation}
which defines a principal bundle ${\bf p}:(T^*Q-0_Q)\to (T^*Q-0_Q)/\R^+.$ 
The canonical symplectic structure 
$\omega_Q$ on $T^*Q-0_Q$ is homogeneous with respect to this action, i.e.
 \begin{equation}\label{proy2}
 \phi_s^*(\omega_Q)=s\omega_Q,\,\;\;\; \mbox{ for all } s\in \R^+,
 \end{equation}
or equivalently, 
 $${\mathcal L}_{\Delta_Q}\omega_Q=\omega_Q,$$ where $\Delta_Q$ is the infinitesimal generator of the action $\phi$, that is, $\Delta_Q$ is the Liouville vector field on $T^*Q$. 
 
 The  quotient manifold  $(T^*Q-\{0_Q\})/\R^+$ is diffeomorphic to the spherical cotangent bundle
 $${\Bbb S}(T^*Q)=\{\alpha\in T^*Q/\|\alpha\|=\sqrt{g(\alpha,\alpha)}=1\},$$
 where $g$ denotes here the corresponding metric on $T^*Q.$

 In the particular case when $Q$ is $\R^{n+1},$ with the flat Riemannian metric, we have that the spherical cotangent bundle is 
 	 \begin{equation}\label{eq:ST*Rn1}
	 {\Bbb S}(T ^*\R^{n+1})\cong \R^{n+1}\times S^n\,,
	 \end{equation}
 with $S^n$ the $n$-sphere in $\R^{n+1}.$ 
 
 If $\lambda_Q$ is the Liouville $1$-form on $T^*Q$, i.e. 
 $$\lambda_Q(\alpha)(v)=\alpha({T_\alpha}\tau^*_Q(v)), \mbox{ for all } \alpha\in T^*Q, \; v\in {T_\alpha}(T^*Q),$$
 and $i:{\Bbb S}(T^*Q)\to T^*Q$ is the inclusion map, then $\eta_Q=-i^*\lambda_Q$ is a contact $1$-form on ${\Bbb S}(T^*Q)$ (see, for instance, \cite{B,R,S}). 
 
We remark that the regular and singular Marsden-Weinstein reduction of the spherical cotangent bundle have been discussed some years ago~\cite{DrOrRa,DrRaRo}. 
In fact, this reduction process is a particular case of the more general Marsden-Weinstein contact reduction which has been intensively discussed by several authors \cite{Al,GrGr,GrOr,DrOr,Wi}.
\hfill$\blacklozenge$
\end{example}

A contact $1$-form is a particular case of a Jacobi structure. {\it A Jacobi manifold} $M$ (\cite{K,Lic}) is endowed with a pair $(\Pi,E)\in {\mathcal V}^2(M)\times {\mathfrak X}(M),$ where $\Pi$  is  a $2$-vector field and $E$ is  a vector field on $M$ such that
$$\lcf \Pi,\Pi\rcf=2E\wedge \Pi,\;\;\;\; \lcf E,\Pi\rcf=0,$$
$\lcf\cdot,\cdot\rcf$ being the Schouten-Nijenhuis bracket on $M$. Associated with a Jacobi manifold $(M,(\Pi,E))$ we have a {\it Jacobi bracket}, given by 
\begin{equation}\label{jacobi}
\{f_1,f_2\}_M=\Pi(df_1,df_2) + f_1 E(f_2) -f_2 E(f_1), \mbox{ for } f_1,f_2\in C^\infty(M), \end{equation}
 which is a Lie bracket on the space of functions on $M$ such that  $$\{ff_1,f_2\}_M=f\{f_1,f_2\}_M+f_1\{f,f_2\}_M - f_1f\{1,f_2\}_M,$$
 for $f,f_1,f_2\in C^\infty(M).$ In fact, a Jacobi bracket on the space of functions $C^\infty(M)$ defines a Jacobi structure $(\Pi,E)$ satisfying (\ref{jacobi}). 
  
 Note that we have a vector field  $X_{f_2}^{\{\cdot,\cdot\}_M}$ on $M$, \emph{the Hamiltonian vector field  associated with $f_2$,} such that 
 \begin{equation}\label{JK}
 \{ff_1,f_2\}_M=f\{f_1,f_2\}_M+ X_{f_2}^{\{\cdot,\cdot\}_M}(f) f_1.
 \end{equation} 
  In terms of the Jacobi estructure, this vector field is given by 
 	\begin{equation}\label{eq:XHJacobi}
	X_{f_2}^{\{\cdot,\cdot\}_M}=\Pi(\cdot,df_2)-f_2E\,.
	\end{equation}

 If $E=0$ we recover the notion of a Poisson bracket on the space of functions on $M$  and $(M,\Pi)$ is a Poisson manifold. 
 
 For a manifold $C$ with a contact $1$-form $\eta$,  the Jacobi structure is 
$$
\Pi _\eta (\alpha ,\beta )=d\eta (\flat _\eta ^{-1} (\alpha ), \flat _\eta ^{-1} (\beta )), \qquad \;\;\; E_\eta =-\mathcal{R}$$
for all $\alpha ,\beta \in \Omega^1(C)$, where $\mathcal{R}$ is the Reeb vector field associated with $\eta$ and 
 $\flat _\eta \colon {\mathfrak X}(C)\to \Omega^1(C)$ is the isomorphism of $C^\infty(C)$-modules given by 
$$\flat _\eta(X)=i_Xd\eta +\langle \eta,X\rangle \eta,\;\;\; \mbox{ with }X\in {\mathfrak X}(C).$$ Moreover,  the Hamiltonian vector field defined in (\ref{v-h})
is just the corresponding Hamiltonian vector field $X_{f}^{\{\cdot,\cdot\}_M}$ associated with the Jacobi structure ${(\Pi_\eta,E_\eta})$ (see \cite{Lic}). 

\begin{example}[{\bf continuing Example \ref{ex:STQ}}]{\rm 
 In the case of the spherical cotangent bundle of a Riemannian manifold $(Q,g)$, we consider the differentiable function $\kappa _g:T^*Q-0_Q\to \R$  defined by 
$$\kappa _g(\alpha)=\frac{1}{2} \|\alpha \|^2,\mbox{ for $\alpha\in T^*Q.$}$$

If  $X^{\omega_Q}_{\kappa_{g}}\in {\mathfrak X}(T^*Q-0_Q)$ is the Hamiltonian vector field with respect to  $\omega_Q$ of the function $\kappa_g$, that is, the vector field characterized by 
$$i_{X^{\omega_Q}_{\kappa_{g}}}\omega_Q=d\kappa_g,$$  then 
the Jacobi structure $(\Pi_{\eta_Q}, E_{\eta_Q})$ on $({\Bbb S}(T^*Q),\eta_Q)$   is just the restriction to ${\Bbb S}(T^*Q)$ of the Jacobi structure $(\Pi,E)$  on $T^*Q$ given by  
$$\Pi=\Pi_{\omega_Q} - \Delta_{Q}\wedge X^{\omega_Q}_{\kappa_{g}},\qquad 
E=X^{\omega_Q}_{\kappa_{g}},$$
 where   $\Pi_{\omega_Q}$  is the Poisson structure induced by the symplectic structure $\omega_Q$ on $T^*Q.$ }
 \hfill$\blacklozenge$
\end{example} 

\medskip

On the other hand,  contact $1$-forms are also a particular kind of more general structures which are not, in general, Jacobi structures.

{\it A contact structure} on a $(2n+1)$-dimensional smooth manifold $C$ is a distribution ${\mathcal D}$ on $C$ of codimension~$1$ 
which is maximally non-integrable, i.e. for all $x\in C,$ there is an open neighborhood $U$ of $x$ such that the distribution ${\mathcal D}$ on $U$ 
is given by the annihilator $<\eta_U>^o$  of the vector subbundle of $T^*C$ generated by a contact $1$-form $\eta_U$  on $U$, 
that is  $${\mathcal D}_U=\left\langle \eta_U\right\rangle^o=\{X\in TU/\eta_U(X)=0\}.$$
 In this case, the pair $(C,{\mathcal D})$ is {\it a contact manifold}. 

It is clear that if $C$ has a global contact $1$-form,  the pair $(C,{\mathcal D}=<\eta>^o)$ defines a contact manifold. But in general, a contact structure on $C$ may be not defined by a global contact $1$-form on $C$ as the following example proves.

 \begin{example}[{\bf The projective cotangent bundle of a manifold}]\label{ex:PTQ}\rm
Let $Q$ be an $n$-dimensional manifold and $0_Q$ the zero section of the cotangent  bundle $\tau_Q^*: T^*Q\to Q$. 
On the open subset $T^*Q-0_Q$ of $T^*Q,$ we consider the action of the multiplicative group $\R-\{0\}$ given by 
\begin{equation}\label{action1}
\phi:(\R-\{0\})\times (T^*Q-0_Q)\to  (T^*Q-0_Q), \qquad \phi(s,\alpha)=s\alpha.
\end{equation}

Its infinitesimal generator $\Delta_Q$ is the Liouville vector field on $T^*Q-0_Q$ 
 and the reduced space $(T^*Q-{0_Q})/(\R-\{0\})$ is just the projective cotangent  bundle ${\Bbb P}(T^*Q)$ of $Q.$
 
 \begin{remark}\label{projPV}
The notion of projective bundle ${\Bbb P}(V)$ may be defined for an arbitrary  vector bundle $\tau:V\to Q$ as the quotient bundle induced by 
the action on $V-0_Q$ 
$$(\R-\{0\})\times (V-0_Q)\to (V-0_Q),\qquad (s,v)\to sv, $$
 where $0_Q$ is the zero section of $\tau:V\to Q.$

A  particular case is when $Q$ is a point and $V$ is  the dual of a Lie algebra ${\mathfrak g}.$ In this case, 
the base space of the projective bundle $p:{\mathfrak g}^*-\{0\}\to {\Bbb P}{\mathfrak g}^*$ is just the projective space ${\Bbb P}{\mathfrak g}^*.$  \end{remark}

 If $\lambda_Q$ is the Liouville $1$-form on $T^*Q$ and ${\bf p}:(T^*Q-0_Q)\to {\Bbb P}(T^*Q)$ is the quotient projection, using (\ref{proy2}), 
 one can prove that the distribution of co-rank $1$ 
  $$\widetilde{\mathcal D}=\left\langle \lambda_Q\right\rangle^o$$ 
 is ${\bf p}$-projectable. If  ${\mathcal D}$ denotes  its projection, then $({\Bbb P}(T^*Q),\mathcal{D})$ is a contact manifold. 
 
 A simple example of this kind of contact manifolds is when $Q$ is a Lie group $G$. In this case, the cotangent bundle $T^*G$ 
 may be left trivialized to the trivial vector bundle $G\times {\mathfrak g}^*\to G$, where ${\mathfrak g}$ is the Lie algebra of $G$. 
 Under this identification,  the action $\phi$ is just 
 $$\phi: (\R-\{0\})\times (G\times ({\mathfrak g}^*-\{0\}))\to G\times ({\mathfrak g}^*-\{0\}),\qquad (s,(g,\mu))\to (g,s\mu).$$
 Then, the quotient bundle is ${\bf p}=Id_G\times p:G\times ({\mathfrak g}^*-\{0\})\to G\times {\Bbb P}{\mathfrak g}^*$ and 
 the contact structure is the distribution on $G\times {\Bbb P}{\mathfrak g}^*$ given by 
$${\mathcal D}_{(g,p(\mu))}=\left\langle(T_gL_{g^{-1}})^*(\mu)\right\rangle^o\times T_{p(\mu)}({\Bbb P}{\mathfrak g}^*)$$
for all $g\in G$ and $\mu\in {\mathfrak g}^*-\{0\}.$ Here $L:G\times G\to G$ denotes the left action of the Lie group $G$ on itself. 
 
 \medskip
 
In the particular case when  $G=\R^{n+1}$, the projective cotangent bundle ${\Bbb P}(T^*\R^{n+1})$ can be identified with 
the cartesian product $\R^{n+1}\times {\Bbb P}^n(\R)$, where ${\Bbb P}^n(\R)$ is the real projective space of dimension $n.$ 
 This space is non-orientable when $n$ is even and therefore, ${\Bbb P}(T^*\R^{n+1})$  does not admit a {global} contact $1$-form. 
 \hfill$\blacklozenge$
\end{example}

Contact and Jacobi structures are special examples of more general structures: {\it  Kirillov structures} (see \cite{K}, and also \cite{BGG,G,GG}).

\begin{definition}\rm {\it A Kirillov structure }on a manifold $K$ is a real line bundle $\pi_L:L\to K$ endowed with a Lie bracket $[\cdot,\cdot]_{L^*}:\Gamma(L^*)\times \Gamma(L^*) \to \Gamma(L^*)$ on the space $\Gamma(L^*)$ of sections of the dual line bundle  $\pi_{L^*}: L^*\to K$  such that
$[\cdot,h_2]_{L^*}:\Gamma(L^*)\to \Gamma(L^*)$ is a derivation for all  $h_2\in \Gamma(L^*),$ that is,
\begin{equation}\label{fh}
[fh_1,h_2]_{L^*}=f[h_1,h_2]_{L^*} + X^{[\cdot,\cdot]_{L^*}}_{h_2}(f)h_1, \mbox{ for all }h_1\in \Gamma(L^*)\mbox{ and }f\in C^\infty(K),
\end{equation}
 with $X^{[\cdot,\cdot]_{L^*}}_{h_2}$ a vector field on $K$. The vector field $X^{[\cdot,\cdot]_{L^*}}_{h_2}\in {\mathfrak X}(K)$  is called {\it the symbol of $[\cdot, h_2]_{L^*}$.}
\end{definition}

 The line bundle $(L^*,\pi_{L^*},K)$ with the bracket $[\cdot,\cdot]_{L^*}$  on the space of sections of $\pi_{L^*}$ is,  in Marle's terminology \cite{Ma},  
 a  Jacobi bundle. This kind of structures are  essentially equivalent to the conformal Jacobi structures studied in  \cite{DLM}.
 
When the  line bundle $\pi_L:L\to K$ is trivial, i.e. $L\cong K\times \R$, the sections of $\pi_{L^*}$ can be identified with smooth functions on $K$. 
Under this identification, the local Lie algebra $[\cdot,\cdot]_{L^*}$ is a Lie bracket 
$$\{\cdot,\cdot\}_K:C^\infty(K)\times C^\infty(K)\to C^\infty(K)$$
satisfying that, for all $f\in C^\infty(K),$ 
$$\{f f_1,f_2\}_K=f\{f_1,f_2\}_K + X^{\{\cdot,\cdot\}_K}_{f_2}(f)f_1,$$
for all $f_1,f_2\in C^\infty(K).$ 

Note that if $f_1=1$ then $\{f,f_2\}_K=f\{1,f_2\}_K + X^{\{\cdot,\cdot\}_K}_{f_2}(f)$, which implies 
$$\{f f_1,f_2\}_K=f\{f_1,f_2\}_K + f_1\{f,f_2\}_K-ff_1\{1,f_2\}_K.$$ This means that $\{\cdot,\cdot\}_K$ is a Jacobi bracket, 
whose associated Jacobi structure $(\Pi,E)$  is given by 
$$E(f_1)=\{1,f_1\}_K \qquad \mbox{ and }\qquad  \Pi(df_1,df_2)=\{f_1,f_2\}_K -f_1\{1,f_2\}_K + f_2\{1, f_1\}_K,$$
with $f_1,f_2\in C^\infty(K).$ Conversely, every Jacobi manifold $(K,\{\cdot,\cdot\}_K)$  
defines a Kirillov structure on the trivial line bundle $\pi:K\times \R\to K.$ Therefore, Jacobi structures are just  trivial Kirillov structures.  

In the case of a contact manifold $(C,{\mathcal D}),$ consider the line bundle with total space the annihilator bundle ${\mathcal D}^o$ of ${\mathcal D},$ 
$\pi_{ {\mathcal D}^o}: {\mathcal D}^o\to C$  of ${\mathcal D}$, which is, in general, not trivial.  Using this line bundle and the representation $\R^\times\times \R\to \R$  of $\R^\times$ (with $\R^\times=\R^+$ or $\R^\times=\R-\{0\}$)  over the vectorial space of real numbers  given by $$(s,t)\to \frac{t}{s},$$ we have a $\R^\times$-principal bundle ${\bf p}:S:= ({\mathcal D}^o-0_C)\to C\cong S/\R^\times$ (see Appendix \ref{A}). Here, $0_C$ is the zero section of $\pi_{ {\mathcal D}^o}: {\mathcal D}^o\to C.$ Moreover, we may consider the $1$-form $\lambda_S$ on $S$
$$\lambda_S(\alpha)(v)=<\alpha,T_\alpha {\bf p}(v)>,\;\;\; \mbox{ with } \alpha\in ({\mathcal D}^o-0_C), \,\,\,v\in T_\alpha ({\mathcal D}^o-0_C),$$
which defines the symplectic structure $\omega_S=-d\lambda_S.$ This symplectic structure is homogeneous with respect 
to the  $\R^\times$-action  $\phi^S:\R^\times \times S\to S$ on $S,$ i.e.
$$(\phi_s^S)^*(\omega_S)=s\omega_S, \mbox{ for } s\in \R^\times.$$

Now, a Lie bracket $[\cdot,\cdot]_{({\mathcal D}^o)^*}$ on the space of sections $\Gamma( ({\mathcal D}^o)^*)$ of the line bundle $({\mathcal D}^o)^*\to C$ can be constructed as follows.

There is a one-to-one correspondence between the sections of $\pi_{ ({\mathcal D}^o)^*}: ({\mathcal D}^o)^*\to C$ and the homogeneous functions $H:S\to \R$ on $S$ satisfying
$$H\circ \phi^S_s=sH, \,\;\; \mbox{ for } s\in \R^\times,$$
(see Appendix \ref{A}). Using the homogeneous character of the symplectic structure $\omega_S,$ we deduce that the Poisson bracket $\{H_1,H_2\}_S$ induced by $\omega_S$ of two homogeneous functions $H_1,H_2:S\to \R$ is again a homogeneous function. Taking into account this fact, we define the Kirillov bracket 
 $[\cdot,\cdot]_{({\mathcal D}^o)^*}:\Gamma(({\mathcal D}^o)^*)\times \Gamma(({\mathcal D}^o)^*)\to \Gamma(({\mathcal D}^o)^*)$
by the relation 
\begin{equation}\label{KS}
\{H_1,H_2\}_S=-H_{[h_{H_1},h_{H_2}]_{({\mathcal D}^o)^*}},
\end{equation}
where $h_{H_i}$ is the section of $\pi_{({\mathcal D}^o)^*}:({\mathcal D}^o)^*\to C$ associated with the homogeneous function $H_i$ on $S$ and $H_{[h_{H_1},h_{H_2}]_{({\mathcal D}^o)^*}}$ is the homogenous function associated with the section $[h_{H_1},h_{H_2}]_{({\mathcal D}^o)^*}.$
In conclusion, every contact manifold $(C,{\mathcal D})$ admits a Kirillov structure on the line bundle $\pi_{{\mathcal D}^o}: {\mathcal D}^o\to C$. 

The following diagram illustrates the relations among all the previous geometric structures.

\begin{center}
\tikzstyle{elps}=[ellipse,draw=black!50,thick]
\tikzstyle{crcl}=[circle,draw=black!50,thick]
\begin{tikzpicture}[thick,scale=0.6,every node/.style={scale=0.7}]

\node[elps,minimum width=18cm,minimum height=7cm] (G) at (3.5,0) {};

\node[elps,minimum width=9cm,minimum height=5cm] (G) at (0,0) {};
\node[elps,minimum width=6cm,minimum height=3cm] (G) at (-0.5,0) {};
\node[elps,minimum width=9cm, minimum height =5cm] (H) [right=2pt of G] {CONTACT};
\node[crcl,minimum width=2cm] (ker) [left=60pt of H] {\scriptsize{SYMPLECTIC}};
\node[] (Poisson) [left=20pt of H] {\small{POISSON}};
\node[above=1cm of Poisson] (Jacobi) {{JACOBI}};
\node[crcl,minimum width=0.5cm,label=90:\begin{tabular}{c}{\tiny{CONTACT}}\\ {\tiny{$1$-FORM}}\end{tabular}] (p) at (4.2,-0.8)
{\tiny{REEB}};
\node[] (Kirillov)  at (3.5,3)
 {\large{KIRILLOV}};
\end{tikzpicture}
\end{center}

%%%%%%%%%%%%%%%%%%%%%%%%%%%%%%%%%%%%%%%%%%%
 
\section{Scaling symmetries and symplectic (Poisson) Hamiltonian systems}\label{ssp}

In the previous examples, the reduction processes  are the fundamental tool  to obtain contact structures from symplectic structures. Now, we will show this process for a general symplectic Hamiltonian system, which was discussed in  \cite{GG}, and then we will present some examples. 
We begin by recalling the notion of scaling symmetries \cite{BJS} for this kind of dynamical systems. 

\begin{definition}
Let $(S,\omega)$ be a symplectic manifold and $H:S\to \R$ a function on $S$. A scaling symmetry for the dynamical system $(S,\omega,H)$ 
is a principal action $\phi:\R^\times\times S\to S$ of the multiplicative group $\R^\times$ (with $\R^\times=\R^+$  or $\R^\times=\R-\{0\})$ on $S$ such that 
$$\phi_s^*\omega=s\omega\qquad \textrm{and}\qquad \phi_s^*H=sH,\qquad \textrm{for all}\,\,\, s\in \R^\times.$$ 
\end{definition}
Note that if $\Delta\in {\mathfrak X}(S)$ is the infinitesimal generator of the scaling symmetry, then 
$${\mathcal L}_{\Delta}\omega=\omega\qquad \mbox{ and }\qquad {\mathcal L}_{\Delta} H=H.$$ 

In fact, if $\R^\times$ is connected (that is, $\R^\times =\R^+$),
 then the previous conditions are equivalent to the fact that the principal action $\phi$ is a scaling symmetry. 

An immediate consequence of the existence of a scaling symmetry is that the symplectic structure is exact, that is, 
$\omega=-d\lambda$ with $\lambda=-i_\Delta\omega.$
Moreover, the $1$-form $\lambda$ is homogeneous, i.e. $(\phi_s)^*\lambda =s\lambda,$
and if $\Pi_\omega$ is the Poisson bi-vector induced by $\omega,$ then $\Pi_\omega$ satisfies the following relation 
\begin{equation}\label{Pw}
\wedge^2T\phi_s\circ \Pi_\omega={s}\Pi_\omega\circ\phi_s,
\end{equation}
where $\wedge^2T\phi_s:\wedge^2TS\to \wedge^2TS$ is the vector bundle isomorphism induced by the diffeomorphism $\phi_s:S\to S.$ 

Now, we will develop the reduction process with the scaling symmetry $\phi.$ 

Denote by  $C:= S/{\Bbb R}^\times$ the corresponding quotient   manifold and by ${\bf p_S}:S\to C$ its  quotient projection. Then, we may consider the distribution 
$$\widetilde{\mathcal D}=\left\langle\lambda\right\rangle^o,$$ which is ${\bf p}$-projectable and the corresponding distribution ${\mathcal D}$ on $C$, which is a contact structure. 

Denote by $[\cdot,\cdot]_{({\mathcal D}^o)^*}$  the Kirillov bracket on the space of sections of the line bundle $\pi_{({\mathcal D}^o)^*}:({\mathcal D}^o)^*\to C$ characterized by (\ref{KS}). On the other hand, from the homogeneity of $H:S\to\R$ with respect to the scaling symmetry, we have a section $h_H:C\to ({\mathcal D}^o)^*$ of $\pi_{({\mathcal D}^o)^*}$. The corresponding symbol $X_{h_H}^{[\cdot,\cdot]_{({\mathcal D}^o)^*}}$ of $h_H$ given as in (\ref{fh}) is just the ${\bf p}$-projection on $C$ of the Hamilton vector field $X_H^{\omega}.$  The following diagram summarizes this reduction process (see \cite{GG}, for more details on this reduction process).
\[\xymatrix{ 
&&\R&\\
&
(S,\omega,X^{\omega}_{H})\kern-20pt\ar@(ul,u)[]^{\R^\times}\kern40pt\ar[d]^{{\bf p_S}} \ar[ur]^{H}
&&\\&
 (C,{\mathcal D},X_{h_H}^{[\cdot,\cdot]_{({\mathcal D}^o)^*}}) \ar[rr]_{\kern30pt h_H}&&({\mathcal D}^o)^*\ar@<-1ex>[ll]_{\kern30pt \pi_{({\mathcal D}^o)^*}}}
\]

Now, we will exhibit two examples of contact dynamical  systems  induced by a scaling reduction process. 
 
 \begin{example}[\bf{The 2d harmonic oscillator and the spherical cotangent bundle}]\label{ex:STQho}\rm  
% Suppose that $Q=\R^2-\{(0,0)\}\cong \R^+\times S^1$.  We consider the symplectic Hamiltonian system $(T^*(\R^+\times S^1),\omega_{\R^+\times S^1},H)$ 
% of the harmonic oscillator such that $\omega_{\R^+\times S^1}$ is the standard symplectic structure on $T^*(\R^+\times S^1)$ and  
% $H:T^*(\R^+\times S^1)\to\R $ is the Hamiltonian function 
%$$H(r,\theta,p_r,p_\theta)=\frac{1}{2m}(p_r^2 + \frac{p_\theta^2}{r^2}) + kr^2,$$
% where $m$ is the mass, $k$ is a real constant and $(r,\theta,p_r,p_\theta)$ are local coordinates on $T^*(\R\times S^1)$. The action (\ref{action2}) under the identification 
% $Q=\R^2-\{(0,0)\}\cong \R^+\times S^1$ is just 
%$$\phi:\R^+ \times (T^*(\R^+\times S^1)-0_{\R^+\times S^1})\to (T^*(\R^+\times S^1)-0_{\R^+\times S^1});\;\;\;(s,(r,\theta,p_r,p_\theta))\mapsto (\sqrt{s}r,\theta,\sqrt{s}p_r,sp_\theta).$$

 Consider the manifold $Q=\R^2-\{(0,0)\}$, which is diffeomorphic  to $\R^+\times S^1$ via the map 
 \begin{equation}\label{id}
 \Psi:\R^2-\{(0,0)\}\to \R^+\times S^1,\qquad \Psi(q)=(\|q\|, \frac{q}{\|q\|}).
 \end{equation}
 Then, under this identification the space $T^*Q-0_Q\cong (\R^2-\{(0,0)\})\times(\R^2-\{(0,0)\})$ is just $(\R^+\times S^1)\times (\R^+\times S^1)$. Moreover,  if $(r,\theta)$  (respectively, $(r,\theta, r', \theta')$) are polar coordinates on $Q\cong \R^+\times S^1$ (respectively, on $T^*Q-0_Q\cong \R^+\times S^1\times \R^+\times S^1$), we have that the local expression of 
 %the Liouville $1$-form, 
 the standard symplectic form $\omega_Q$ and the corresponding Poisson bi-vector $\Pi_{\omega_Q}$   on $\R^+\times S^1\times \R^+\times S^1$ are  respectively
 %$$\lambda_Q=r'\cos(\theta-\theta')dr -rr'\sin(\theta-\theta')d\theta$$
 $$\omega_Q=cos(\theta-\theta')dr\wedge dr'+ r'\sin(\theta-\theta')dr\wedge d\theta'-r\sin(\theta-\theta')d\theta\wedge dr' + rr'cos(\theta-\theta')d\theta\wedge d\theta'$$
 and
 \begin{equation}\label{wP}
 \begin{array}{lcr}
 \Pi_{\omega_Q}&=& -\cos(\theta-\theta'){\partial_r}\wedge {\partial_{r'}}- \displaystyle\frac{\sin(\theta-\theta')}{r'}{\partial_r}\wedge{\partial_{\theta'}}+\displaystyle\frac{\sin(\theta-\theta')}{r}{\partial_\theta}\wedge {\partial_{r'}} - \displaystyle\frac{\cos(\theta-\theta')}{rr'}{\partial_\theta}\wedge {\partial_{\theta'}}.
 \end{array}
 \end{equation}
 
Now,  we consider the symplectic Hamiltonian system $(T^*Q,\omega_Q,H)$ 
 of the harmonic oscillator where, under the identification (\ref{id}),
 $H:T^*Q\to\R $ is the Hamiltonian function  given by 
	\begin{equation}\label{eq:Hho}
	H(r,\theta,r',\theta')=\frac{1}{2}\left(r^2+(r')^{2}\right)\,, 
	\end{equation} 
with $r,r'\in \R^+.$  In this case the dynamics is given by the  Hamiltonian vector field 
$$X_{H}^{\omega_{Q}}=r\cos(\theta-\theta') {\partial_{r'}}+ r\frac{\sin(\theta-\theta')}{r'}{\partial_{\theta'}}-r'{\cos(\theta-\theta')}{\partial_r} + r'\frac{\sin(\theta-\theta')}{r} {\partial_\theta}.$$

We consider the action of $\R^+$ on $\R^+\times S^1$, whose infinitesimal generator is 
$$\Delta=\frac{1}{2} (r{\partial_r} + r'{\partial_{r'}}).$$
Note that it defines a scaling symmetry, since ${\mathcal L}_{\Delta}\omega_Q=\omega_Q$ and ${\mathcal L}_{\Delta}H=H.$

On the other hand, the diffeomorphism
$$\begin{array}{rcl}
\R^+\times S^1\times \R^+\times S^1&\to& \R^+\times S^1\times \R^+\times S^1\\
(r,\theta,r',\theta')&\to & (\rho,\theta,\rho',\theta')=(r,\theta, \displaystyle\frac{r'}{r},\theta')
\end{array}
$$
transforms the generator $\Delta$ of the ${\Bbb R}^+$-action on $\R^+\times S^1\times \R^+\times S^1$ 
into the vector field $\frac{1}{2} \rho\partial_\rho$. 
The inverse of this map is $(\rho,\theta,\rho',\theta')\to (\rho, \theta,\rho\rho',\theta')$. 
Then,  we have that:
\begin{itemize}
\item The reduced space  ${\Bbb S}(T^*(\R^+\times S^1))$ (see Example \ref{ex:STQ})
is diffeomorphic to $\R^+\times S^1\times S^1.$ Under this identification, the quotient map ${\bf p}:\R^+\times S^1\times \R^+\times S^1\to \R^+\times S^1\times S^1$ is just  $${\bf p}(\rho,\theta,\rho',\theta')=(\rho',\theta,\theta')$$

\item The contact $1$-form under this identification  is given by
$$\eta=\iota^*(i_{\Delta}\omega_Q)=\frac{1}{2}\big(\rho'\sin(\theta-\theta')(d\theta + d\theta')+\cos(\theta-\theta')d\rho'\big)$$
with $(\rho',\theta,\theta')\in \R^+\times S^1\times S^1.$ Here $\iota:\R^+\times S^1\times S^1\to \R^+\times S^1\times \R^+\times S^1$ is the inclusion $\iota(\rho',\theta,\theta')=(1,\theta,\rho',\theta').$

The Reeb vector field associated with this contact $1$-form is 
$${\mathcal R}=2\cos(\theta-\theta')\partial_{\rho'}+2\displaystyle\frac{\sin(\theta-\theta')}{\rho'}\partial_{\theta'}.$$

From the homogeneity of the Poisson structure $\{\cdot,\cdot\}_{\omega_Q}$ with respect to the symplectic form $\omega_Q$ we deduce that 
$$\{\rho^2h,\rho^2h'\}_{\omega_Q}=\frac{1}{2}\rho{\partial_\rho}\{\rho^2h,\rho^2h'\}_{\omega_Q},$$ with 
$h,h'\in C^\infty(\R^+\times S^1\times S^1).$ Therefore, 
$$\{\rho^2h,\rho^2h'\}_{\omega_Q}=\rho^2\{h,h'\}_{C},$$
where  $\{\cdot,\cdot\}_C$ is the Jacobi bracket on $C=\R^+\times S^1\times S^1$ and $h,h'\in C^\infty(C).$ 

From this fact and using the local expression of $\Pi_{\omega_Q}$ with respect to the coordinates $(\rho,\theta,\rho',\theta')$, we obtain the Jacobi bracket associated with the contact structure defined by $\eta$

\begin{equation}\label{BJ}
\begin{array}{rcl}
\{h,h'\}_C&=&-2\cos(\theta-\theta')(h\partial_{\rho'}h'-h'\partial_{\rho'}h)-2\displaystyle\frac{\sin(\theta-\theta')}{\rho'}(h\partial_{\theta'} h' -h'\partial_{\theta'} h)\\[8pt]&& \sin(\theta-\theta')(\partial_{\rho'}h\partial_{\theta'}h'-\partial_{\rho'}h'\partial_{\theta'}h) + \sin(\theta-\theta')(\partial_{\theta}h\partial_{\rho'}h'-\partial_{\theta}h'\partial_{\rho'}h)\\[8pt]&& +\displaystyle\frac{\cos(\theta-\theta')}{\rho'}(\partial_{\theta}h\partial_{\theta'}h'-\partial_{\theta}h'\partial_{\theta'}h)
\end{array}
\end{equation}

Therefore, the Jacobi structure  is given by 
\begin{equation}\label{JC}
\begin{array}{rcl}
\Pi_C&=&\sin(\theta-\theta')\partial_{\rho'}\wedge \partial_{\theta'} - \sin(\theta-\theta')\partial_{\rho'}\wedge \partial_{\theta}-\displaystyle\frac{\cos(\theta-\theta')}{\rho'}\partial_{\theta}\wedge \partial_{\theta'},\\[5pt] E_C&=&-2\cos(\theta-\theta')\partial_{\rho'}-2\displaystyle\frac{\sin(\theta-\theta')}{\rho'}\partial_{\theta'}.
\end{array}
\end{equation}

\item The reduced Hamiltonian function $H$ is the function $H_{|\R^+\times S^1\times S^1}(\rho',\theta,\theta')=\displaystyle\frac{1}{2}((\rho')^2+1)$. 
\item The reduced vector field on $\R^+\times S^1\times S^1$ is 
\begin{equation}\label{p1}T{\bf p}(X^{\omega_Q}_H)=(1+(\rho')^2)\cos(\theta-\theta')\partial_{\rho'}+\sin(\theta-\theta')(\frac{1}{\rho'}\partial_{\theta'}+\rho'\partial_{\theta}),\end{equation}
which is just the contact  Hamiltonian vector field of the restriction $H_{|\R^+\times S^1\times S^1}$ with respect to the contact $1$-form $\eta$ or, equivalently, the Jacobi Hamiltonian vector field  of $H_{|\R^+\times S^1\times S^1}$ with respect to $(\Pi_C,E_C).$ 
\end{itemize}
 \hfill$\blacklozenge$
 \end{example}

In the previous example the Hamiltonian function $H$ induces a  function $H_{|C}$ on the reduced space $C$. 
However, in general, we do not necessarily have a function on the reduced space, as the following example proves.

 \begin{example}\label{ex:projective0}{\bf The projective cotangent Hamiltonian system deduced from a standard linear Hamiltonian system. }{\rm Let  $Y\in {\mathfrak X}(Q)$ be a vector field on the manifold $Q$ of dimension $n.$ We denote by $Y^{\ell}:T^*Q\to \R$ the fiberwise-linear function induced by $Y$, i.e.
 \begin{equation}\label{ell}
 Y^\ell(\alpha)=<\alpha,Y(\tau^*_Q(\alpha))>,\;\;\; \forall \alpha\in T^*Q,
 \end{equation} 
 with $\tau_Q^*:T^*Q\to Q$ the canonical projection. 
 If $(q^i,p_i)$ are local coordinates of $T^*Q-0_Q$, the local expression of $Y^\ell$ is 
  $$Y^\ell(q,p)=Y^i(q)p_i,$$
  where  $Y(q)=Y^i(q){\partial_{ q^i}}.$ 
   We remark that  the linearity of $Y^\ell$ implies its homogeneity, i.e.
  $$Y^\ell(s\alpha) =sY^\ell(\alpha),\mbox{  for all $s\in \R-\{0\}$ and $\alpha\in T^*Q$,}$$
   with respect to the action given in (\ref{action1}).    
   
    The local expression of the Hamiltonian vector field $X^{\omega_Q}_{Y^\ell}\in {\mathfrak X}(T^*Q)$ with respect to the canonical symplectic structure $\omega_Q$ on $T^*Q$ is 
   $$X^{\omega_Q}_{Y^\ell}=Y^k\partial_{q^k} - p_j{\partial_{q^k}}Y^j\partial_{p_k}.$$
   
   Moreover, if $\{\cdot, \cdot\}_{\omega_Q}$ is the Poisson bracket induced by $\omega_Q$, then 
   $$\{Y^\ell, Z^\ell\}_{\omega_Q}=-[Y,Z]^\ell,$$ for all  $Y,Z\in\mathfrak{X}(Q).$
   
  Let $U_{i_0}$ be the open subset of $T^*Q-0_Q$ given by
$$U_{i_0}=\{(q^1,\dots, q^n,p_1,\dots, p_{n})\in T^*Q-0_Q/p_{i_0}\not=0\}.$$ 
   
Then, if $H$ is the restriction of $Y^\ell$ to $T^*Q-0_Q,$ after the reduction process of the  symplectic  
Hamiltonian system $(T^*Q-0_Q,\omega_Q,H)$ by the scaling symmetry, 
we have that:
\begin{itemize}
\item  The corresponding reduced space is the  projective cotangent bundle ${\bf p}:T^*Q-0_Q\to {\Bbb P}(T^*Q)$ induced by the action (\ref{action1}). If we denote by $\widetilde{p}=(\widetilde{p}_1,\dots ,\widetilde{p}_{i_0-1},\widetilde{p}_{i_0+1},\cdots,\widetilde{p}_n)$ the standard coordinates on ${\bf p}(U_{i_0})\subseteq {\Bbb P}(T^*Q)$, then the local expression of the projection ${\bf p}$  on $U_{i_0}$ is 
$${\bf p}(q^1,\dots q^n, p_1,\dots p_n)=(q^1,\dots q^n,\frac{p_1}{p_{i_0}}, \dots,\frac{p_{i_0-1}}{p_{i_0}}, \frac{p_{i_0+1}}{p_{i_0}},\dots, \frac{p_n}{p_{i_0}})=(q,\widetilde{p}).$$

\item The contact distribution ${\mathcal D}$  on ${\bf p}(U_{i_0})$ is just

$$
\begin{array}{rcl}
({\mathcal D}_{(q,\widetilde{p})})_{|{{\bf p}(U_{i_0})}}
=&&T_{(q,p)}{\bf p}(<p_idq^i>^o)=T_{(q,p)}{\bf p}<X_1,\dots,X_{i_0-1},X_{i_0+1},\dots, X_{n},
\partial_{p_1}, \dots, \partial_{p_n}>\\[5pt]
=&&<\widetilde{X}_1,\dots,\widetilde{X}_{i_0-1},\widetilde{X}_{i_0+1},\dots, \widetilde{X}_{n},
\partial_{\widetilde{p}_1},\dots, \partial_{ \widetilde{p}_{i_0-1}},\partial_{\widetilde{p}_{i_0+1}},\cdots,\partial_{\widetilde{p}_n}>,
\end{array}
$$
with $X_i=p_{i}\partial_{q^{i_{0}}}-p_{i_{0}}\partial_{q^{i}}$, 
$\widetilde{X}_i=\widetilde{p}_{i}\partial_{q^{i_{0}}}-{\widetilde{p}_{i_{0}}}\partial_{q^{i}}.$ Moreover, the local expression of the line bundle $\pi_{{\mathcal D}^o}:{\mathcal D}^o\to  {\Bbb P}(T^*Q)$  on  ${\bf p}(U_{i_0})$  is 
$$\pi_{{\mathcal D}^o}(q,\widetilde{p},t)=(q,\widetilde{p}).$$

\item The section $h_{Y^\ell}:{\Bbb P}(T^*Q)\to ({\mathcal D}^o)^* $ of $\pi_{({\mathcal D}^o)^*}:({\mathcal D}^o)^*\to  {\Bbb P}(T^*Q)$ associated with $Y^\ell$ is defined locally by 
\begin{equation}\label{ell}
h_{Y^\ell}(q,\widetilde{p})(q,\widetilde{p},t)=Y^\ell(q,\widetilde{p}_1,\cdots, \widetilde{p}_{i_0-1}, t, \widetilde{p}_{i_0+1}\cdots, \widetilde{p}_{n})=Y^i(q)\widetilde{p}_i + Y^{i_0}(q)t.
\end{equation}
\item The Kirillov bracket $[\cdot,\cdot]_{({\mathcal D}^o)^*}$ on the sections of the dual of the line bundle $\pi_{{\mathcal D}^o}$ satisfies the condition
$$[h_{X^\ell}, h_{Y^\ell}]_{({\mathcal D}^o)^*}=-h_{\{X^\ell,Y^\ell\}_{\omega_Q}}=h_{[X,Y]^\ell}.$$

\item The Hamiltonian vector field  $X^{\omega_Q}_{Y^\ell}\in {\mathfrak X}(T^*Q)$ is ${\bf p}$-projectable  to a vector field on ${\Bbb P}(T^*Q)$ whose local expression is 
$$Y^i\partial_{{q}^i}+ \big(\widetilde{p}_j(\widetilde{p}_i\partial_{{q}^{i_0}}Y^j - \partial_{{q}^i}Y^j)+ \widetilde{p}_i\partial_{q^{i_0}}Y^{i_0} -\partial_{{q}^i}Y^{i_0}
\big)\partial_{\widetilde{p}_i}.$$
\end{itemize}
 \hfill$\blacklozenge$
 
 {\bf The particular case of a Lie group.} When $Q$ is a Lie group $G$ and the vector field $Y$ on $G$ is left-invariant,  we have (see Example \ref{ex:PTQ}): 
\begin{itemize}
\item The vector field $Y$ is given by $Y(g)=T_eL_{g}(\xi),$ with $\xi$ an element of the Lie algebra ${\mathfrak g}$ of $G$. 
\item The linear function $Y^\ell:G\times {\mathfrak g}^*\to \R$ is just 
$Y^\ell(g,\alpha)=\alpha(\xi).$
\item The reduced space is $G\times {\Bbb P}{\mathfrak g}^*.$ 
\item The contact structure is the distribution on $G\times {\Bbb P}{\mathfrak g}^*$ 
given by 
$${\mathcal D}_{(g,p(\mu))}=\left\langle(T_gL_{g^{-1}})^*(\mu)\right\rangle^o\times T_{{p}(\mu)}({\Bbb P}{\mathfrak g}^*)\mbox{ for all $g\in G$ and $\mu\in {\mathfrak g}^*-\{0\}$}.$$
 Here $p:{\mathfrak g}^*-\{0\}\to {\Bbb P}{\mathfrak g}^*$ is the corresponding quotient map determined by the scaling symmetry on   ${\mathfrak g}^*-\{0\}.$ 
\item 
The fiber of the  line bundle $\pi_{{\mathcal D}^o}: {\mathcal D}^o\to G\times {\Bbb P}{\mathfrak g}^*$  at $(g,\mu)\in G\times {\Bbb P}{\mathfrak g}^*$ is just 
$$
{{\mathcal D}^o}_{(g,p(\mu))}=\left\langle(T_gL_{g^{-1}})^*(\mu)\right\rangle.$$

\item 
The reduced Hamiltonian section of $\pi_{({\mathcal D}^o)^*}: ({\mathcal D}^o)^*\to G\times {\Bbb P}{\mathfrak g}^*$ induced by $Y^\ell$ is 
$$h_\xi(g,p(\mu))(t(T_gL_{g^{-1}})^*(\mu))=t\mu(\xi)$$
with $g\in G$, $\mu\in {\mathfrak g}^*-\{0\}$ and $\xi=Y(e).$

\item Under the identification $T^*G-0_G\cong G\times ({\mathfrak g}^*-\{0\}),$ the symplectic structure $\omega_G$ is given by 
$$\omega_G(g,\mu)((v_1,\mu_1), (v_2,\mu_2))= -\mu_1(T_gL_{g^{-1}}(v_2))+\mu_2(T_gL_{g^{-1}}(v_1))+\mu[T_gL_{g^{-1}}(v_1), T_gL_{g^{-1}}(v_2)]_{{\mathfrak g}}$$ 
for all $g\in G$, $\mu, \mu_1,\mu_2\in {\mathfrak g}^*$ and $v_1,v_2\in T_gG$ (see \cite{AM}). Here $[\cdot,\cdot]_{\mathfrak g}$
 is the Lie algebra structure on ${\mathfrak g}.$ Then,  the Hamiltonian vector field $X_{Y^\ell}^{\omega_G}\in {\mathfrak X}(T^*G-0_G)$   
can be identified with the pair $$(Y,\{\cdot,\xi^\ell\}_{{\mathfrak g}^*-\{0\}})\in {\mathfrak X}(G)\times {\mathfrak X}({\mathfrak g}^*-\{0\}),$$ 
where ${\xi^\ell}$ is the restriction to ${\mathfrak g}^*-\{0\}$ of the linear function ${\xi}^\ell: {\mathfrak g}^*\to \R$ 
induced by $\xi$ and $\{\cdot,\cdot\}_{{\mathfrak g}^*-\{0\}}$ is the restriction  to functions on ${\mathfrak g}^*-\{0\}$ of the Lie-Poisson bracket on ${\mathfrak g}^*$.  
We recall that this bracket is characterized by 
\begin{equation}\label{Lie-Poisson}
\{{\xi}^\ell_1,{\xi}^\ell_2\}_{{\mathfrak g}^*}(\alpha)=-\alpha([\xi_1,\xi_2]_{{\mathfrak g}}),
\end{equation}
with $\alpha\in {\mathfrak g}^*$ and $\xi_i\in {\mathfrak g}$ (for more details,  see \cite{AM}). 

 The reduced vector field after this reduction is just $(Y, X_{h_\xi})\in {\mathfrak X}(G)\times {\mathfrak X}({\Bbb P}{\mathfrak g}^*),$ such that 
\begin{equation}\label{h1}
X_{h_\xi}(f)\circ p=\{f\circ p, {\xi^\ell}\}_{{\mathfrak g}^*-\{0\}}, \quad\forall f\in C^\infty({\Bbb P}{\mathfrak g}^*),
\end{equation}
which is the symbol of the derivation $[\cdot, h_\xi]_{({\mathcal D}^o)^*}.$

A more explicit (local) expression of the vector field $X_{h_\xi}\in {\mathfrak X}({\Bbb P}{\mathfrak g}^*)$ may be obtained as follows. For each   $\nu\in {\mathfrak g}-\{0\}$ one can consider the coordinate open  neighborhood 
$p(U)$ of ${\Bbb P}{\mathfrak g}^*$ with $U=\{\alpha\in {\mathfrak g}^*/\nu^\ell(\alpha)=\alpha(\nu)\not=0\}$. On $p(U)$  the typical local coordinates  in ${\Bbb P}{\mathfrak g}^*$ have the form  ${\emph r}(\zeta,\nu)$ 
 characterized by 
$${\emph r}(\zeta,\nu)\circ p=\frac{\zeta^\ell}{\nu^\ell},$$
with $\zeta\in {\mathfrak g}-\{0\}.$ Moreover, using (\ref{Lie-Poisson}) and (\ref{h1}), we deduce that 
\begin{equation}\label{21'}
X_{h_\xi}({\emph r}(\zeta,\nu))\circ p=\frac{\zeta^\ell ([\nu,\xi]_{\mathfrak g})^\ell - \nu^\ell ([\zeta,\xi]_{\mathfrak g})^\ell}{({\nu^\ell})^2}.
\end{equation}

\end{itemize}

 }
\end{example}

Given the above facts, it is natural to ask if it is possible to extend the previous reduction to a  Poisson Hamiltonian system, not necessarily symplectic. 
The following result gives an 
affirmative answer to this question. Before that, we introduce the notion of scaling symmetry for this kind of systems. 

\begin{definition}\label{def:scalingPoisson} \rm
 If $(P,\Pi,H)$ is a  Poisson Hamiltonian system on the Poisson manifold $(P
 ,\Pi)$,  {\it a scaling symmetry for  
 $(P,\Pi,H)$}  is a principal  action $\phi^P: \R^\times \times P\to P$ of the multiplicative group
  $\R^\times$ (with $\R^\times=\R^+$ or $\R^\times=\R-\{0\}$)  on $P$ such that  the Poisson structure $\Pi$ and the function 
  $H$ are homogeneous with respect to  the action $\phi^P$, that is,
 \begin{equation}\label{Ps}
\wedge^2T\phi^P_s\circ \Pi={s}\Pi\circ \phi_s^P \quad \mbox{ and }\quad H\circ \phi^P_s=sH\quad \mbox{ for } s\in \R^\times,
  \end{equation}
where $\wedge^2T\phi^P_s: \wedge^2 TP\to \wedge^2TP$ is the vector bundle isomorphism induced by $\phi_s^P:P\to P$. 
\end{definition}

The conditions in (\ref{Ps}) are equivalent to the following ones 
\begin{equation}\label{Ps2}
\{F\circ \phi_s^P,G\circ \phi_s^P\}_P=s(\{F,G\}_P\circ \phi_s^P) \quad\mbox{ and } \quad H\circ \phi^P_s=sH\,,\quad \mbox{ for } F,G\in C^\infty(P) \mbox{ and } s\in \R^\times,
\end{equation}
where $\{\cdot,\cdot\}_P$ is the Poisson bracket of functions on $P.$ 

We remark that (\ref{Ps}) implies that the Poisson structure $\Pi$ and the Hamiltonian function $H$  satisfy (see \cite{DLM,Ma})
$${\mathcal L}_{\Delta_P}\Pi=-\Pi\quad \mbox{ and }\quad {\mathcal L}_{\Delta_P}H=H,$$
where $\Delta_P$ is the infinitesimal generator of $\phi^S.$ Moreover, if $\R^\times$ is connected (that is, $\R^\times=\R^+$) the previous conditions are equivalent to (\ref{Ps}). 
In addition, in the case of a symplectic manifold $(S,\omega),$ the condition $$\wedge^2T\phi^P_s\circ \Pi_\omega={s}\Pi_\omega\circ \phi_s^P,$$ 
with $\Pi_\omega$ the Poisson structure induced by $\omega, $ is equivalent to 
$(\phi_s^P)^*\omega=s\omega.$

We have the following important result. 

%If we have a scaling symmetry on the Poisson Hamiltonian system $(P,\Pi,H)$ then, in general,  we cannot induce a Hamiltonian function on the quotient manifold $K=P/\R^\times$. Then, two natural questions appear: is it possible to induce a dynamics (a vector field) on $K$  from Poisson Hamiltonian system $(P,\Pi,H)$ and if is so, is this dynamics Hamiltonian in a certain sense? 

\begin{theorem}\label{reductionPoisson}\rm
Let ${\bf p_P}: P\to K=P/\R^\times$ be a principal $\R^\times$-bundle with total space a homogeneous Poisson manifold $(P,\Pi)$.  
If $\pi_L:L\to K$ is the line bundle associated with the principal bundle ${\bf p_P}$ (see Appendix \ref{A}), then: 
\begin{enumerate}

\item[$a)$] There is a one-to-one correspondence between homogeneous functions $H:P\to \R$ and sections $h_H:L^*\to K$ of the dual line bundle $\pi_{L^*}:L^*\to K$ of $\pi_L$.

\item[$b)$]  On the space $\Gamma(L^*)$ of the sections of the line bundle $\pi_{L^*}:L^*\to K$,  we have a Kirillov bracket $$[\cdot,\cdot]_{L^*}:\Gamma(L^*)\times \Gamma(L^*)\to \Gamma(L^*)$$
such that the Poisson bracket $\{H_1,H_2\}_P$ of two homogeneous functions $H_1,H_2:P\to \R$ is just  
$$\{H_1,H_2\}_P=-H_{[h_{H_1},h_{H_2}]_{L^*}},$$
where $H_{[h_{H_1},h_{H_2}]_{L^*}}$ is the homogeneous function on $P$ associated with ${[h_{H_1},h_{H_2}]_{L^*}}.$
\item[$c)$]  The Hamiltonian vector field $X_H^{\{\cdot,\cdot\}_P}=-i(dH)\Pi\in {\mathfrak X}(P)$ of a homogeneous function $H$ with respect to the Poisson bracket $\{\cdot,\cdot\}_P$ is ${\bf p_P}$-projectable and its projection is the symbol $X_{h_H}^{[\cdot,\cdot]_{L^*}}\in {\mathfrak X}(K)$ of the derivation $[\cdot,h_H]_{L^*},$ i.e.  the following diagram is commutative 
\[
\xymatrix{
P\ar[d]^{X^{\{\cdot,\cdot\}_P}_H}\ar[rr]^{\bf p_P}&&K\ar[d]^{X_{h_H}^{[\cdot,\cdot]_{L^*}}}\\
TP\ar[rr]^{T{\bf p_P}}&&TK}
\]

\item[$d)$]  We have that 
$$
\big[X^{[\cdot,\cdot]_{L^*}}_{h_1}, X^{[\cdot,\cdot]_{L^*}}_{h_2}\big]=-X^{[\cdot,\cdot]_{L^*}}_{[h_1,h_2]_{L^*}},$$
for all $h_1,h_2\in \Gamma(L^*).$
\end{enumerate}
\end{theorem}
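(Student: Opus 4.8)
The plan is to set up the correspondence between homogeneous functions on $P$ and sections of $L^*$ via the construction of the associated line bundle (Appendix \ref{A}), and then transport the entire Poisson algebraic structure along this correspondence. Concretely, a principal $\R^\times$-bundle ${\bf p_P}:P\to K$ together with the representation $(s,t)\mapsto t/s$ of $\R^\times$ on $\R$ gives the line bundle $L = (P\times\R)/\R^\times \to K$; its dual $L^*$ is the line bundle associated with the dual representation $(s,t)\mapsto st$. A section $h\in\Gamma(L^*)$ then corresponds exactly to an $\R^\times$-equivariant map $P\to\R$ intertwining the action on $P$ with $(s,t)\mapsto st$, i.e. a function $H:P\to\R$ with $H\circ\phi^P_s = sH$. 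This gives part $a)$ almost immediately, and I would just cite the Appendix for the isomorphism $\Gamma(L^*)\cong C^\infty_{\mathrm{hom}}(P)$.

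\textbf{For part $b)$,} the key observation is the one already flagged in the text: if $H_1,H_2$ are homogeneous of degree $1$, then $\{H_1,H_2\}_P$ is again homogeneous of degree $1$. This follows from \eqref{Ps2}: applying $\phi_s^P$ to $\{H_1,H_2\}_P$ and using homogeneity of $\Pi$ and of the $H_i$ gives $\{H_1,H_2\}_P\circ\phi_s^P = s\{H_1\circ\phi_s^P,\ H_2\circ\phi_s^P\}_P \cdot s^{-1}\cdot s = s\{H_1,H_2\}_P$ after bookkeeping — more carefully, $\{H_1\circ\phi_s^P, H_2\circ\phi_s^P\}_P = s(\{H_1,H_2\}_P\circ\phi_s^P)$ and $H_i\circ\phi_s^P = sH_i$, so $s^2\{H_1,H_2\}_P = s(\{H_1,H_2\}_P\circ\phi_s^P)$, hence $\{H_1,H_2\}_P\circ\phi_s^P = s\{H_1,H_2\}_P$. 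Thus I define $[h_{H_1},h_{H_2}]_{L^*}$ to be the section corresponding, under the bijection of part $a)$, to $-\{H_1,H_2\}_P$. The sign is a convention chosen so that the symbols compose correctly in part $d)$; it matches the contact case \eqref{KS}. That $[\cdot,\cdot]_{L^*}$ is then a genuine Lie bracket is inherited verbatim from the Jacobi (Lie) identities of $\{\cdot,\cdot\}_P$ via the $\R$-linear bijection; the derivation property \eqref{fh} needs a short check: for $f\in C^\infty(K)$, $fh_{H_1}$ corresponds to $(f\circ{\bf p_P})\cdot H_1$, and $\{(f\circ{\bf p_P})H_1, H_2\}_P = (f\circ{\bf p_P})\{H_1,H_2\}_P + H_1\{f\circ{\bf p_P}, H_2\}_P$ by the Leibniz rule of the Poisson bracket; since $f\circ{\bf p_P}$ is $\R^\times$-invariant (degree $0$), $\{f\circ{\bf p_P},H_2\}_P = X_{H_2}^{\{\cdot,\cdot\}_P}(f\circ{\bf p_P})$, and one shows this equals $(Z(f))\circ{\bf p_P}$ for a well-defined vector field $Z$ on $K$ — which is exactly the symbol $X_{h_{H_2}}^{[\cdot,\cdot]_{L^*}}$. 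This simultaneously establishes part $c)$ (the Hamiltonian vector field $X_H^{\{\cdot,\cdot\}_P}$ projects: it is $\R^\times$-related to itself because $\Pi$ and $dH$ transform with compensating weights, so it descends, and the descended field is by construction the symbol), and gives the displayed commutative square.

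\textbf{Finally, part $d)$} is the statement that the symbol map $h\mapsto X_h^{[\cdot,\cdot]_{L^*}}$ is a Lie algebra anti-homomorphism. I would derive it by projecting the corresponding identity on $P$: from $c)$, $X_{h_i}^{[\cdot,\cdot]_{L^*}}$ is the projection of $X_{H_i}^{\{\cdot,\cdot\}_P}$, and the bracket of projectable vector fields projects to the bracket of projections, so $[X_{h_1}^{[\cdot,\cdot]_{L^*}}, X_{h_2}^{[\cdot,\cdot]_{L^*}}]$ is the projection of $[X_{H_1}^{\{\cdot,\cdot\}_P}, X_{H_2}^{\{\cdot,\cdot\}_P}] = X_{\{H_1,H_2\}_P}^{\{\cdot,\cdot\}_P}$ (the standard Poisson identity). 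But $\{H_1,H_2\}_P = -H_{[h_1,h_2]_{L^*}}$, so its Hamiltonian vector field is $-X_{H_{[h_1,h_2]_{L^*}}}^{\{\cdot,\cdot\}_P}$, whose projection is $-X_{[h_1,h_2]_{L^*}}^{[\cdot,\cdot]_{L^*}}$, giving the claim. \textbf{The main obstacle} I anticipate is purely bookkeeping rather than conceptual: keeping the homogeneity weights and the sign conventions consistent across parts $a)$–$d)$, and making precise the passage ``a degree-$0$ function on $P$ is the pullback of a function on $K$, and the Poisson bracket of a homogeneous function with a degree-$0$ function descends to a well-defined derivation on $C^\infty(K)$'' — this is where one must invoke that ${\bf p_P}$ is a \emph{principal} (hence surjective submersion with connected-along-fibers-mod-$\R^\times$) bundle so that invariant functions and projectability behave as expected, and it is exactly the content the Appendix is there to supply.
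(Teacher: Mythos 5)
Your proposal is correct and follows essentially the same route as the paper: part $a)$ via the appendix correspondence, part $b)$ by transporting the Poisson bracket of homogeneous functions (shown homogeneous by exactly the computation you give) with a minus sign and verifying the Leibniz identity, part $c)$ by descending the Hamiltonian vector field (the paper checks $\mathcal{L}_{\Delta_P}X^{\{\cdot,\cdot\}_P}_{H_{h}}=0$, which is the infinitesimal form of your equivariance argument), and part $d)$ by projecting the bracket identity for Hamiltonian vector fields. The only divergence is the sign you take in $\big[X^{\{\cdot,\cdot\}_P}_{H_1},X^{\{\cdot,\cdot\}_P}_{H_2}\big]=\pm X^{\{\cdot,\cdot\}_P}_{\{H_1,H_2\}_P}$, a convention-dependent point (your choice is in fact the one that lands directly on the stated sign in $d)$) that does not affect the substance of the argument.
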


\begin{proof}
 
For a proof of $a)$ see Appendix \ref{A}.

\noindent  If $H_1,H_2$ are two homogeneous functions then,
$$H_1\circ \phi^P_s=sH_1 \mbox{ and } H_2\circ \phi^P_s=sH_2\,,$$
and, using (\ref{Ps2}), we deduce that 
$$\{H_1\circ \phi^P_s, H_2\circ \phi^P_s\}_P=s(\{H_1,H_2\}_P\circ \phi_s^P),$$
which implies that
$$s\{H_1,H_2\}_P=\{H_1,H_2\}_P\circ\phi_s^P,$$ that is, 
the function $\{H_1,H_2\}_P$ is homogeneous. Thus, 
the Poisson bracket $\{\cdot,\cdot\}_P$ is closed for homogeneous functions with respect to $\Delta_P.$

Using this fact and Proposition \ref{1-1}, (see Appendix \ref{A}) we may define a bracket 
$[ \cdot,\cdot]_{L^*}:\Gamma(L^*)\times \Gamma(L^*)\to \Gamma(L^*)$ on the space $\Gamma(L^*)$ of the sections of $\pi_{L^*}:L^*\to K$ which is characterized by 
\begin{equation}\label{charac}
H_{[h_1,h_2]_{L^*}}=-\{H_{h_1},H_{h_2}\}_P, \mbox{ with }h_1,h_2\in \Gamma(L^*).
\end{equation} 

This bracket was described (up to the sign) in \cite{BGG} (Theorem 3.2). 
Using the fact that the Poisson bracket $\{\cdot,\cdot\}_P$ defines a  Lie algebra on the space of functions on $P$ and (\ref{charac}), we deduce that  $[\cdot,\cdot]_{L^*}$ is a Lie bracket. Moreover, for a $C^\infty$ function $f:K\to \R$, from the properties of the Poisson bracket $\{\cdot,\cdot\}_P$, we have that 
\begin{equation}\label{D1}
\begin{array}{rcl}
H_{[fh_1,h_2]_{L^*}}&=&-\{H_{fh_1},H_{h_2}\}_P=-\{(f\circ {\bf p_P})H_{h_1},H_{h_2}\}_P\\[5pt]&=&-(f\circ {\bf p_P})\{H_{h_1},H_{h_2}\}_P- \{(f\circ {\bf p_P}),H_{h_2}\}_P H_{h_1}\\[5pt]&=&(f\circ {\bf p_P})H_{[h_1,h_2]_{L^*}}+ X^{\{\cdot,\cdot\}_P}_{ H_{h_2}}(f\circ {\bf p_P})H_{h_1}.\end{array}
\end{equation}
On the other hand, using the homogeneity of $\Pi$ and $H_{h_2},$ we deduce that
$${\mathcal L}_{\Delta_P} X_{H_{h_2}}^{\{\cdot,\cdot\}_P}=-{\mathcal L}_{\Delta_P} i_{dH_{h_2}}\Pi=-i_{dH_{h_2}}{\mathcal L}_{\Delta_P} \Pi-i(d(\Delta_P(H_{h_2})))\Pi=0,$$
or, equivalently,  $X^{\{\cdot,\cdot\}_P}_{ H_{h_2}}$ is ${\bf p_P}$-projectable. Then, there is a vector field $X_{h_2}^{[\cdot,\cdot]_{L^*}}$ on $K$ such that 
\begin{equation}\label{proyX}X^{[\cdot,\cdot]_{L^*}}_{h_2}\circ {\bf p_P}=T{\bf p_P}\circ X^{\{\cdot,\cdot\}_P}_{ H_{h_2}}. \end{equation}
From (\ref{D1}) and (\ref{proyX}), we have that 
$$H_{[fh_1, h_2]_{L^*}}=(f\circ {\bf p_P})H_{[h_1,h_2]_{L^*}}+ (X^{[\cdot,\cdot]_{L^*}}_{h_2}(f)\circ {\bf p_P})H_{h_1},$$
and consequently (see (\ref{fh}))  we have a Kirillov structure on the space of sections of $\pi_{L^*}:L^*\to K$ and the symbol of $[\cdot,h]_{L^*}$ is just the ${\bf p_P}$-projection on $K$ of the Hamiltonian vector field $X^{\{\cdot,\cdot\}_P}_{H_h}$. 
This proves $b)$ and $c).$

Finally, from (\ref{proyX}) and  using that $\big[X^{\{\cdot,\cdot\}_P}_{H_{h_1}}, X^{\{\cdot,\cdot\}_P}_{H_{h_2}}\big]=-X^{\{\cdot,\cdot\}_P}_{\{H_{h_1},H_{h_2}\}_P},$
we have that 
$$
\big[X^{[\cdot,\cdot]_{L^*}}_{h_1}, X^{[\cdot,\cdot]_{L^*}}_{h_2}\big]=-X^{[\cdot,\cdot]_{L^*}}_{[h_1,h_2]_{L^*}}.$$
Therefore, we deduce $d).$
\end{proof}

\begin{remark} In \cite{Ma} Marle proves that if $\pi_L:L\to K$ is a line bundle endowed with a Kirillov structure 
--~$(L^*,\pi_{L^*},K)$ is a Jacobi bundle in his terminology~-- 
and $h:K\to L^*$  is a section of $\pi_{L^*}$,  
then one can induce a Poisson structure $\Pi$ on $L^*$ (which is homogeneous with respect to the negative of the Euler vector field $\Delta$ on $L^*$),  
a differentiable function $H:P:=(L^*-0_{L^*})\to \R$ and a vector field $X$ on $L^*$ such that: 
\begin{itemize}
\item
The restriction of $X$ to $P$ is just the Hamiltonian vector field induced by $\Pi$ and $H.$ 
\item 
The vector field  $X$  projects on a vector field $X_h$ on  $K$
\end{itemize}
(see Theorem 4.3 and Proposition 4.7 in \cite{Ma}). Therefore, if the flow of $\Delta$ induces a principal action on $P,$ then we have a Poisson Hamiltonian system $(P,\Pi,H)$ with a scaling symmetry in such a way that the corresponding reduced Kirillov Hamiltonian system is just the original system. So, Marle's result may be considered as a converse of Theorem \ref{reductionPoisson}. 
\end{remark}
\begin{remark}
In \cite{ViWa2} (see Theorem 2.2.6 of \cite{ViWa2}), the authors obtain a one-to-one correspondence between Atiyah $(l,m)$-tensors on a line bundle and homogeneous $(l,m)$-tensors on its slit dual bundle (the dual bundle with the zero section removed). Using this general result, one could prove that there exists a one-to-one correspondence between Kirillov structures on the line bundle and homogeneous Poisson structures on its slit dual bundle  (see Example 2.4.2 in \cite{ViWa2}). Anyway, in order to have our paper more self-contained, we have included a direct and simple proof of the items $a), b), c)$ and $d)$ of Theorem \ref{reductionPoisson}.

\end{remark}
The following diagram summarizes Theorem \ref{reductionPoisson}
\[\xymatrix{ 
&&\R&\\
&
(P,\{\cdot,\cdot\}_P,X^{\{\cdot,\cdot\}_P}_{H})\ar@(ul,u)[]^{\R^\times}\ar[d]^{{\bf p_P}} \ar[ur]^{H}
&&\\&
 (K=P/\R^\times,\pi_L:L\to K,X_{h_H}^{[\cdot,\cdot]_{L^*}}) \ar[rr]_{\kern80pt h_H}&&L^*\ar@<-1ex>[ll]_{\kern80pt \pi_{L^*}}}
\]

\section{Reduction of  symplectic Hamiltonian systems using first  standard symmetries and then scaling symmetries}\label{section3}

 In this section, we will discuss the reduction of symplectic Hamiltonian systems which are invariant under the action of a symmetry Lie group and, in addition, admit a scaling symmetry. 
 The standard and the scaling symmetries will be compatible in the following sense. 
\begin{definition}{\rm 
Let $(S,\omega,H)$ be a symplectic Hamiltonian system. Suppose  that  $\phi^S:\R^\times \times S\to S$ is a scaling symmetry on  $(S,\omega,H)$. Additionally, suppose that we have a Lie group $G$  and a $G$-principal bundle $\wp_S:S\to S/G$  such that    the  corresponding  action $\Phi^S:G\times S\to S$ on the symplectic manifold $S$ satisfies:
\begin{enumerate}
\item[(i)] $(\Phi^S_g)^*(\omega)=\omega,$ for $g\in G,$ i.e. the action $\Phi^S$ is symplectic.

\item[(ii)] $H:S\to \R$ is $G$-invariant, that is, $H(\Phi^S(g,x))=H(x),$ for all $x\in S$ and $g\in G.$ 

\item[(iii)] The symplectic and  the scaling actions commute, that is, $\Phi^S_g\circ \phi_s^S=\phi_s^S\circ \Phi^S_g,$  for all $s\in \R^\times$ and $g\in G.$ 
\end{enumerate}
In this case we say that the dynamical system {\it $(S,\omega,H)$  admits  a scaling symmetry $\phi^S:\R^\times \times S\to S$ and a symplectic $G$-symmetry $\Phi^S:G\times S\to S$ which are compatible}. }
\end{definition}
 
 Note that the previous conditions (i) and (ii)  imply that 
\begin{equation}\label{sp}
{\mathcal L}_{\xi_S}\Pi_{\omega}=0 \quad\mbox{ and }\quad {\mathcal L}_{\xi_S}H=0,
\end{equation}
where $\xi_S$ is the infinitesimal generator of the action $\Phi^S$ associated with an element $\xi$ of the Lie algebra ${\mathfrak g}$ of $G$ and $\Pi_\omega$ is the Poisson bi-vector on $S$ induced by the symplectic structure $\omega$. If $G$ is connected, then the conditions (i) and (ii)  are equivalent to (\ref{sp}). 

\subsection{The first step: Reduction by standard symmetries.}
It is well-known (see \cite{MR}) that  the symplectic structure on $S$ induces a Poisson bracket $\{\cdot,\cdot\}_P$ on the quotient manifold $P:=S/G$ characterized by
\begin{equation}\label{pc=cp}
\{f_1\circ \wp_S,f_2\circ \wp_S\}_S=\{f_1,f_2\}_P\circ \wp_S.
\end{equation}
with $f_i\in C^\infty(P)$, where $\{\cdot,\cdot\}_S$ is the Poisson bracket induced by the symplectic structure $\omega$  on $S.$ Consequently, the Poisson structure $\Pi_P$ on $P$ and the Poisson structure $\Pi_\omega$ induced by the symplectic structure $\omega$ are related as follows 
\begin{equation}\label{spPI}
\wedge^2T\wp_S\circ \Pi_\omega=\Pi_P\circ \wp_S
\end{equation}

In addition, from the $G$-invariance of $H$, there is a reduced Hamiltonian function $H^G:P\to \R$ such that
\begin{equation}\label{HG1}
H^G\circ \wp_S=H.
\end{equation}

Moreover, the Hamiltonian vector field  $X_{H}^\omega\in {\mathfrak X}(S)$ is $\wp_S$-projectable 
and its projection is just the Hamiltonian vector field $X_{H^G}^{\{\cdot,\cdot\}_P}=\{\cdot, H^G\}_P\in {\mathfrak X}(P)$ associated with the Poisson structure $\Pi_P.$ 

The following diagram summarizes this first reduction process

\[
\xymatrix{ 
&&\R&\\
&
(S ,\omega,X^{\omega}_{H})\kern-20pt\ar@(ul,u)[]^{G}\kern20pt\ar[d]^{\wp_S} \ar[ur]^{H}
&&\\&
 (P=S/G,\{\cdot,\cdot\}_P, X_{H^G}^{\{\cdot,\cdot\}_P}) \ar[ruu]_{H^G}&&
}
\]
  
On the other hand,  using that $\Phi^S_g\circ \phi_s^S=\phi_s^S\circ \Phi^S_g,$  for all $s\in \R^\times$ and $g\in G, $ the $\R^\times$-action $\phi^S$ induces an  action $\phi^{P}:\R^\times\times P\to P$ characterized by 
\begin{equation}\label{aP}
\phi_s^{P}(\wp_S(x))=\wp_S(\phi^S_s(x)), \mbox{ for all } x\in S\mbox{ and } s\in \R^\times.
\end{equation}

Then, we have
\begin{proposition}
$\phi^P$ is a scaling symmetry for the Poisson Hamiltonian system $(P,\Pi_P, H^G).$ 
\end{proposition}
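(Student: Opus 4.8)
The plan is to verify the two defining conditions of a scaling symmetry for a Poisson Hamiltonian system (Definition \ref{def:scalingPoisson}) directly from the construction of $\phi^P$ in (\ref{aP}) and the properties already established for $\phi^S$ and $\Phi^S$. First I would check that $\phi^P$ is a well-defined principal $\R^\times$-action on $P$: since $\Phi^S_g\circ\phi^S_s=\phi^S_s\circ\Phi^S_g$, the map $\phi^S_s$ descends to the quotient $P=S/G$, and the descended maps compose correctly because $\phi^S$ is itself an action; principality of $\phi^P$ follows from principality of $\phi^S$ together with the fact that the two actions commute (so $\R^\times$ still acts freely and properly on the quotient, using that $\wp_S$ is a submersion).

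The core of the argument is then homogeneity. For the Hamiltonian, I would use $H^G\circ\wp_S=H$ from (\ref{HG1}) and $H\circ\phi^S_s=sH$ from the definition of a scaling symmetry: then $(H^G\circ\phi^P_s)\circ\wp_S = H^G\circ\wp_S\circ\phi^S_s = H\circ\phi^S_s = sH = s(H^G\circ\wp_S)$, and since $\wp_S$ is surjective this gives $H^G\circ\phi^P_s = sH^G$. For the Poisson structure, I would work with the bracket form (\ref{Ps2}). Take $f_1,f_2\in C^\infty(P)$; using the characterization (\ref{pc=cp}) of $\{\cdot,\cdot\}_P$, the defining relation (\ref{aP}) of $\phi^P$, and the homogeneity of $\{\cdot,\cdot\}_S$ with respect to $\phi^S$ (which holds because $\omega$ is homogeneous, i.e.\ $(\phi^S_s)^*\omega=s\omega$, so the induced Poisson bracket satisfies $\{F\circ\phi^S_s,G\circ\phi^S_s\}_S=s(\{F,G\}_S\circ\phi^S_s)$), one computes
$$\{f_1\circ\phi^P_s,f_2\circ\phi^P_s\}_P\circ\wp_S = \{(f_1\circ\phi^P_s)\circ\wp_S,(f_2\circ\phi^P_s)\circ\wp_S\}_S = \{(f_1\circ\wp_S)\circ\phi^S_s,(f_2\circ\wp_S)\circ\phi^S_s\}_S,$$
which equals $s\big(\{f_1\circ\wp_S,f_2\circ\wp_S\}_S\circ\phi^S_s\big) = s\big((\{f_1,f_2\}_P\circ\wp_S)\circ\phi^S_s\big) = s\big((\{f_1,f_2\}_P\circ\phi^P_s)\circ\wp_S\big)$. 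Cancelling $\wp_S$ on the right (it is surjective) yields $\{f_1\circ\phi^P_s,f_2\circ\phi^P_s\}_P = s(\{f_1,f_2\}_P\circ\phi^P_s)$, which is exactly the first condition in (\ref{Ps2}).

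The only slightly delicate point — the main obstacle, such as it is — is making sure that $\phi^P$ is genuinely a \emph{principal} action rather than merely a smooth action, and that all the ``cancel $\wp_S$ on the right'' steps are legitimate; both rest on $\wp_S$ being a surjective submersion and on the commutativity (iii), which guarantees the $\R^\times$-action and the $G$-action are compatible enough that the quotient $\R^\times$-action inherits freeness and properness. Everything else is a routine transport of the homogeneity identities through the quotient map, so I would state these verifications compactly and spend the space on the principality check.
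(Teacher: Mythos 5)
Your proof is correct and follows essentially the same route as the paper: compose with the surjective submersion $\wp_S$, transport the homogeneity of the symplectic data through the relations (\ref{pc=cp})--(\ref{aP}), and cancel $\wp_S$. The only cosmetic difference is that you verify the Poisson condition in the bracket form (\ref{Ps2}) while the paper works with the bivector identity (\ref{Ps}) via (\ref{spPI}); the paper itself notes these are equivalent, so nothing substantive changes.
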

\begin{proof}
Given $s\in \R^\times$, using (\ref{spPI}) and (\ref{aP}), it follows that
$$\wedge^2T\phi^P_s\circ \Pi_P\circ \wp_S=\wedge^2T\phi^P_s\circ \wedge^2T\wp_S\circ \Pi_\omega=\wedge^2T\wp_S\circ \wedge^2T\phi_s^S\circ \Pi_\omega.$$
Now, since $\phi^S$ is a scaling symmetry for the symplectic manifold $(S,\omega)$, we deduce that 
$$\wedge^2T\phi^P_s\circ\Pi_P\circ \wp_S=s\wedge^2T\wp_S\circ \Pi_\omega \circ \phi_s^S$$
and, using again (\ref{spPI}), we obtain that 
$$\wedge^2T\phi^P_s\circ \Pi_P\circ \wp_S=s\Pi_P\circ \phi_s^P\circ \wp_S.$$
This implies that 
$$\wedge^2T\phi^P_s\circ \Pi_P=s\Pi_P\circ \phi_s^P.$$
On the other hand, from (\ref{HG1}) and (\ref{aP}), it follows that 
$$H^G\circ \phi^P_s\circ \wp_S=H^G\circ \wp_S\circ \phi_s^S=H\circ \phi_s^S$$
and, since $H$ is a homogeneous function for the action $\phi^S,$ we deduce that 
$$H^G\circ \phi_s^P\circ \wp_S=sH=sH^G\circ \wp_S,$$
where for the last equality we use again  (\ref{HG1}). 
This implies that 
$$H^G\circ \phi_s^P=sH=sH^G,$$
which ends the proof of the result. 
\end{proof}

 Now, we may apply the scaling reduction process.

\subsection{The second step: Reduction by scaling symmetry.}
Consider the Poisson Hamiltonian system $(P,\Pi_P,$ $H^G)$ obtained in the previous subsection by reduction from  the  symplectic Hamiltonian system $(S,\omega,H).$ In  the second step of the reduction process we will apply Theorem \ref{reductionPoisson} to the Poisson Hamiltonian system $(P,\Pi_P,H^G)$ and the scaling symmetry $\phi^P:\R^\times \times P\to P.$

The complete reduction process is described in the following theorem.

\begin{theorem}\label{2reduccion}\rm
Let $(S,\omega,H)$ be a symplectic Hamiltonian system with compatible  scaling symmetry  $\phi^S:\R^\times \times S\to S$ and  symplectic $G$-symmetry $\Phi^S:G\times S\to S$, $G$ being a Lie group. Then:

\begin{enumerate}
\item The multiplicative group $\R^\times$ acts on the Poisson manifold $P=S/G$ 
such that  the corresponding quotient map ${\bf p_{P}}:P\to P/\R^\times$ is a $\R^\times$-principal bundle. 
Moreover, if $\pi_L:L\to K=P/\R^\times$ is the line bundle associated with ${\bf p_P}:P\to K=P/\R^\times$, 
then the homogeneous function $H^G:P\to  \R$ induces a section $h_{H^G}:K\to L^*$ of the dual line bundle $\pi_{L^*}:L^*\to K$ of $\pi_L$.
 
 \item On the space of sections $\Gamma( {L}^*)$ of  $\pi_{L^*}:L^*\to  K$,  we have a Kirillov bracket 
 $$[\cdot,\cdot]_{ {L}^*}:\Gamma( {L}^*)\times \Gamma( {L}^*)\to \Gamma( {L}^*)$$ 
 such that, if $\{\cdot,\cdot\}_P$ is the Poisson bracket on $P,$ 
$$
 [h_{H^G_1},h_{H^G_2}]_{ {L}^*}=-h_{\{H_1^G,H_2^G\}_P},
$$
 for  $H_1^G,H_2^G\in C^\infty(P)$ homogeneous functions  on $P.$ 
 \item The Hamiltonian vector field $X_H^\omega$ is $({\bf p_P}\circ \wp_S)$-projectable on $K$ and its projection is the symbol $X^{[\cdot,\cdot]_{ {L}^*}}_{h_{H^G}}\in{\mathfrak X}(K) $ of $[\cdot,h_{H^G}]_{ {L}^*}$.  \end{enumerate}

\end{theorem}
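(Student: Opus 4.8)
The plan is to assemble Theorem \ref{2reduccion} by composing the two reduction steps already established, so that essentially nothing new has to be proved — the work is in checking that the hypotheses of the earlier results hold at each stage. First I would invoke the Proposition just proved (that $\phi^P$ is a scaling symmetry for the Poisson Hamiltonian system $(P,\Pi_P,H^G)$), together with the fact that $\wp_S:S\to P=S/G$ is a principal $G$-bundle and $\phi^S$ descends to the principal $\R^\times$-action $\phi^P$ via \eqref{aP}. The one point needing attention here is that $\phi^P$ is genuinely a \emph{principal} action, i.e.\ that ${\bf p_P}:P\to K=P/\R^\times$ is a smooth principal $\R^\times$-bundle: this follows because $\phi^S$ is principal and commutes with $\Phi^S$, so the quotient by $\R^\times$ is compatible with the quotient by $G$ and one can realize $K$ as $S/(G\times\R^\times)$; freeness and properness of the $\R^\times$-action on $P$ are inherited from those of $\phi^S$ on $S$. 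Once this is in place, part (1) is exactly the content of item $a)$ of Theorem \ref{reductionPoisson} applied to the homogeneous Poisson manifold $(P,\Pi_P)$ with its scaling symmetry $\phi^P$: the line bundle $\pi_L:L\to K$ is the one associated with ${\bf p_P}$, and the homogeneous function $H^G$ (homogeneous by the Proposition) corresponds to a section $h_{H^G}$ of $\pi_{L^*}$.

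Next, part (2) is immediate from item $b)$ of Theorem \ref{reductionPoisson}: that theorem produces the Kirillov bracket $[\cdot,\cdot]_{L^*}$ on $\Gamma(L^*)$ with $\{H_1,H_2\}_P=-H_{[h_{H_1},h_{H_2}]_{L^*}}$ for homogeneous $H_1,H_2$ on $P$, which is exactly the claimed identity with $H_i=H_i^G$. Here one only needs to remark that the bracket of two $G$-reduced homogeneous functions is again of this form, but this is automatic since homogeneity is with respect to $\phi^P$ and the bracket $\{\cdot,\cdot\}_P$ preserves homogeneity (established inside the proof of Theorem \ref{reductionPoisson}).

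For part (3) I would argue by composing the two projections. In the first step (the subsection ``Reduction by standard symmetries''), the Hamiltonian vector field $X_H^\omega$ is $\wp_S$-projectable and its projection is $X_{H^G}^{\{\cdot,\cdot\}_P}$ on $P$; this is the standard Poisson reduction fact recalled there via \eqref{pc=cp}–\eqref{HG1}. In the second step, item $c)$ of Theorem \ref{reductionPoisson} gives that $X_{H^G}^{\{\cdot,\cdot\}_P}$ is ${\bf p_P}$-projectable with projection the symbol $X_{h_{H^G}}^{[\cdot,\cdot]_{L^*}}\in\mathfrak X(K)$. Composing, $X_H^\omega$ is $({\bf p_P}\circ\wp_S)$-projectable and
\[
T({\bf p_P}\circ\wp_S)\circ X_H^\omega = T{\bf p_P}\circ T\wp_S\circ X_H^\omega = T{\bf p_P}\circ X_{H^G}^{\{\cdot,\cdot\}_P}\circ\wp_S = X_{h_{H^G}}^{[\cdot,\cdot]_{L^*}}\circ {\bf p_P}\circ\wp_S,
\]
which is precisely the assertion.

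I expect the only real obstacle to be the verification underlying part (1): namely that the composite quotient $S\to P\to K$ is well behaved, i.e.\ that the induced $\R^\times$-action on $P$ is free and proper (hence principal) so that $K=P/\R^\times$ is a manifold and $\pi_L:L\to K$ makes sense. This is where the compatibility condition (iii), $\Phi^S_g\circ\phi^S_s=\phi^S_s\circ\Phi^S_g$, is essential: it guarantees that the two group actions together define a principal $(G\times\R^\times)$-action (or an iterated principal action), so that each successive quotient is a smooth principal bundle. Everything else is a formal concatenation of Theorem \ref{reductionPoisson} with classical Poisson reduction, and the $C^\infty$ details (smoothness of $\phi^P$, of $H^G$, etc.) are routine consequences of the principal-bundle structure.
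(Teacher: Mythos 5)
Your proposal is correct and follows exactly the route the paper takes (indeed the paper gives no separate proof of Theorem \ref{2reduccion}, presenting it as the concatenation of the Proposition that $\phi^P$ is a scaling symmetry for $(P,\Pi_P,H^G)$ with Theorem \ref{reductionPoisson}, plus the composition of the two projections for part (3)). Your extra remark on verifying that the induced $\R^\times$-action on $P$ is principal is a reasonable point the paper leaves implicit, and the rest of your argument coincides with the intended one.
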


 The following diagram illustrates both reduction processes together.

 \[
\xymatrix{&\R&&\\
(P=S/G,\{\cdot,\cdot\}_P,X^{\{\cdot,\cdot\}_P}_{H^G})  \ar[d]^{\bf p_{P}}\ar[ur]^{H^G}&&(S,\omega,X_H^\omega)\ar[ul]^H\ar[ll]^{\kern20pt \wp_S}
&\\
(K=P/\R^\times,[\cdot,\cdot]_{  L^*}, X_{h_{H^G}}^{[\cdot,\cdot]_{  L^*}})\ar@/_/[dr]_{h_{H^G}}&&&
\\&{  L}^*\ar@<-1ex>[ul]_{\kern20pt \pi_{  L^*}}&&}
\]

\subsection{Examples}\label{SectionExamplesred}
In this subsection we will apply the previous reduction processes to Examples \ref{ex:STQho} and \ref{ex:projective0}.
\begin{example}\label{ex:2dho1}{\bf Continuing Example \ref{ex:STQho}: The 2d harmonic oscillator reduced first by a standard and then by a scaling symmetry.} {\rm
%In this case we have $Q=\R^{2}$, $T^{*}Q=T^{*}\R^{2}$, $\omega=d\theta_{L}$, with $\theta_{L}$ the canonical (Liouville 1-form),
%and  $\tilde H(q,p)=\frac{1}{2}\left(\|p\|^{2}+\|q\|^{2}\right)$, where we chose the coordinates $(q,p)=(x,y,p_{x},p_{y})$.
In this case we have:
\begin{enumerate}
\item A standard rotational $S^1$-symmetry, with infinitesimal generator $\xi_S=x\partial_{y}-y\partial_{x}+p_{x}\partial_{p_{y}}-p_{y}\partial_{p_{x}},$ where $(x,y,p_x,p_y)$ are coordinates on $S=T^*(\R^2-\{(0,0)\})$. Using the identification $\R^+\times S^1\times \R^+ \times S^1\cong T^*(\R^2-\{(0,0)\})-0_{\R^2-\{(0,0)\}},$  the local expression of $\xi_S$ is 
$$\xi_{S}=\partial_\theta +\partial_{\theta'},$$
where $(r,\theta,r',\theta')$ are polar coordinates on $\R^+\times S^1\times \R^+ \times S^1.$ 
 \item A scaling $\R^{+}$-symmetry, with generator $$\Delta_S=\frac{1}{2} (r{\partial_r} + r'{\partial_{r'}}).$$
\end{enumerate}

One can also easily check that $[\xi_S,\Delta_{S}]=0$ and thus, since the multiplicative group $\R^+$ and $S^1$ are connected, the two symmetries commute. 
Therefore, the corresponding actions are \emph{compatible} 
and we can apply Theorem~\ref{2reduccion}. In order to highlight all the mechanisms involved, we will proceed by steps and indicate the main derivations.

In the first step, with the $S^1$-symmetry, the reduced objects are: 
\begin{itemize}
\item{\bf The reduced space:}
We perform the reduction by the standard symmetry, obtaining the Poisson system $(P,\Pi_P,H^{G})$.  
Firstly, we have that the symplectomorphism 
$$\begin{array}{rcl}\R^+\times S^1\times \R^+ \times S^1&\to& S^1\times( \R^+ \times \R^+\times S^1)\\
((r,\theta),(r',\theta'))&\to& (\theta,(r,r',\alpha))=(\theta,(r,r',\theta-\theta'))\end{array}$$ 
 transforms $\xi_S$  into $\partial_\alpha.$ 
 Using this identification, the quotient manifold $(\R^+\times S^1\times \R^+ \times S^1)/S^1$ is just 
 $$P=\R^+\times \R^+ \times S^1$$ 
 and the reduced Poisson structure on $P$ is given by (see (\ref{wP}))
\begin{equation}\label{P}
\Pi_P(r,r',\alpha)=-\cos\alpha\;\partial_r\wedge \partial_{r'} + \frac{\sin\alpha}{r'}\partial_r\wedge \partial_{\alpha}-\frac{\sin\alpha}{r}\partial_{r'}\wedge \partial_{\alpha},
\end{equation}
where $(r,r',\alpha)$  are local coordinates on  $\R^+ \times \R^+\times S^1.$ 

\item{\bf The reduced Hamiltonian function:}
The reduced Hamiltonian function is 
\begin{equation}\label{HS1}
H^{S^1}(r,r',\alpha)=\frac{1}{2}(r^2 + (r')^2).
\end{equation}

\item{\bf The reduced dynamics:} The corresponding Hamiltonian vector field  on $P$ is just 
$$X_{H^{S^1}}^{\{\cdot,\cdot\}_P}= r\cos\alpha\; \partial_{r'}-r'\cos\alpha\;\partial_{r}- \sin\alpha(\frac{r}{r'} -\frac{r'}{r})\partial_\alpha.$$

\item {\bf The scaling symmetry on the reduced space:}
The projection on $P$ of the scaling symmetry $\Delta_S$ is 
$$\Delta_P=\frac{1}{2} (r{\partial_r} + r'{\partial_{r'}}),$$ 
which  generates the scaling action $\phi^P:\R^+\times (\R^+ \times \R^+\times S^1) \to  (\R^+ \times \R^+\times S^1)$ given by 
$$\phi^P(s,(r,r',\theta))=(\sqrt{s}r,\sqrt{s}r',\theta).$$

\end{itemize}

Now, using Theorem~\ref{reductionPoisson}, we can further reduce again the system (second step) with this last scaling symmetry. We obtain: 

\begin{itemize}
\item {\bf The reduced space:} Consider the diffeomorphim
$$\begin{array}{rcl}\R^+\times \R^+\times S^1&\to& \R^+\times (\R^+\times S^1)\\[5pt](r,r',\alpha)&\to &(\rho,\rho',\sigma)=(r,\displaystyle\frac{r'}{r},\alpha)\end{array}$$ 
which  transforms the generator  $\Delta_P$ of the $\R^+$-action on $\R^+\times \R^+\times S^1$  into the vector field 
$$\frac{1}{2}\rho\partial_\rho$$
with $(\rho,\rho',\sigma)$ local coordinates on $\R^+\times (\R^+\times S^1).$

Thus, the space of orbits of the reduced $\R^+$-action may be identified with $$K=\R^+\times S^1$$ and, under this identification, the canonical projection is 
$${\bf p_P}: P=\R^+\times \R^+\times S^1\to K=\R^+\times S^1,\qquad {\bf p_P}(r,r',\alpha)=(\frac{r'}{r},\alpha).$$

The associated line bundle is trivial 
$$\pi_L: L:=\R\times \R^+\times S^1\to \R^+\times S^1\qquad \pi_{L}(t,\rho',\sigma)=(\rho',\sigma)$$
and therefore, we have a Jacobi bracket on the space of functions on $K.$ In the sequel we will describe this structure. 

The expression of the reduced Poisson structure on $P$  in terms of the new local coordinates $(\rho,\rho',\sigma)$ is (see (\ref{P}))

\begin{equation}\label{P2}
\Pi_P=-\frac{\cos\sigma}{\rho}\partial_\rho\wedge \partial_{\rho'} + \frac{\sin\sigma}{\rho\rho'}\partial_\rho\wedge \partial_{\sigma}-2\frac{\sin\sigma}{\rho^2}\partial_{\rho'}\wedge \partial_{\sigma}
\end{equation} 
Note that 
$${\mathcal L}_{\frac{1}{2}\rho{\partial_\rho}}\Pi_{P}=-\Pi_{P}.$$ 

Since the homogenous functions with respect to the vector field $\frac{1}{2}\rho\partial_\rho$ are of  the form $\rho^2\,h$, with $h\in C^\infty(\R^+\times S^1)$, then we have that 
$$\{\rho^2h,\rho^2h'\}_P=\frac{1}{2}\rho{\partial_\rho}\{\rho^2h,\rho^2h'\}_P,$$
for all $h,h'\in C^\infty(\R^+\times S^1).$
This implies that  the Jacobi bracket $\{\cdot,\cdot\}_K$ on the space of functions on $K$ satisfies 
$$\{\rho^2h,\rho^2h'\}_P=\rho^2\{h,h'\}_K,\;\;\; h,h'\in C^\infty(\R^+\times S^1).$$
 As a consequence (see (\ref{P2})),
$$\{h,h'\}_K=-2\cos\sigma(h\partial_{\rho'}h'-h'\partial_{\rho'}h) + 2\frac{\sin\sigma}{\rho'}(h\partial_\sigma h'-h'\partial_\sigma h) - 2\sin \sigma(\partial_{\rho'}h\partial_{\sigma}h'-\partial_{\sigma}h\partial_{\rho'}h').$$

Therefore, the corresponding Jacobi structure $(\Pi_K,E_K)$ is 
\begin{equation}\label{JC2}
\Pi_K=-2\sin \sigma \partial_{\rho'}\wedge \partial_{\sigma},\qquad E_K=-2\cos\sigma\partial_{\rho'} + 2\frac{\sin \sigma}{\rho'}\partial_{\sigma}.
\end{equation}

\item{\bf The reduced Hamiltonian function:} 
The Hamiltonian function $H^{S^1}$ (see (\ref{HS1})), in terms of the local coordinates $(\rho,\rho',\sigma),$ is

$$
H^{S^1}(\rho, \rho',\sigma)=\frac{\rho^2}{2}(1 + (\rho')^2).$$
Since 
$$\frac{1}{2}\rho{\partial_\rho H^{S^1}}=H^{S^1},$$ we deduce that 
 $H^{S^1}(\rho^2,\rho',\sigma)=\rho^2h_{H^{S^1}}(\rho',\sigma)$ and therefore 
$$h_{H^{S^1}}(\rho',\sigma)=\frac{1}{2} (1+(\rho')^2).$$

\item{\bf The reduced dynamics:}
The Hamiltonian vector field induced by the previous Jacobi structure and the function $h_{H^{S^1}}$ is 
\begin{equation}\label{12}
X_{h_{H^{S^1}}}^{\{\cdot,\cdot\}_K}=-i(dh_{H^{S^1}})\Pi_K-h_{H^{S^1}}E_K=(1+(\rho')^2)\cos\sigma\;\partial_{\rho'}+\frac{(\rho')^2-1}{\rho'}\sin\sigma\;\partial_\sigma,
\end{equation}
which is the ${\bf p_P}$-projection of $X_{H^{S^1}}^{\{\cdot,\cdot\}_P}$.
\end{itemize}}
\end{example}

	\begin{example}[\bf {Continuing Example \ref{ex:projective0}: The linear Hamiltonian system reduced first by a standard and then by a scaling symmetry}]\label{ex:linear}
	
{\rm
	Let $\Phi:G\times Q\to Q$ be a free and proper action of a Lie group  $G$ on a manifold $Q$. Denote by  $0_Q$ the zero section of the cotangent bundle $\tau^*_Q:T^*Q\to Q$ and by $T^*\Phi:G\times (T^*Q-0_Q)\to (T^*Q-0_Q)$ the restriction to $T^*Q-0_Q$ of the cotangent lift action, i.e.~the free and proper action given by 
	\begin{equation}\label{35'}
	(T^*\Phi)_g(\alpha_q)=(T_{\Phi_g(q)}\Phi_{g^{-1}})^*(\alpha_q), \;\; \forall g\in G \mbox{ and } \forall \alpha_q\in T^*_qQ-0_q.
	\end{equation} 
	 It is well-known that $(T^*\Phi)_g$ is a symplectomorphim with respect to the standard symplectic structure $\omega_Q$ on $T^*Q-0_Q.$
	
	Suppose that $Y\in {\mathfrak X}(Q)$ is a $G$-invariant vector field on $Q$, that is, 
	\begin{equation}\label{YG}
	T_q\Phi_g(Y(q))=Y(\Phi_g(q)),\;\; g\in G \mbox{ and } q\in Q.
	\end{equation}
	
Moreover, let $\phi:\R-\{0\}\times (T^*Q-0_Q)\to (T^*Q-0_Q)$ be the action  given by (\ref{action1}). 

A direct computation, using (\ref{35'}) and (\ref{YG}), shows that the fiberwise-linear  function $Y^\ell:T^*Q\to \R$ induced by $Y$ is $G$-invariant, i.e.
$$Y^\ell\circ(T^*\Phi)_g=Y^\ell.$$
The symplectic action $T^*\Phi$ is fiberwise linear. So, 
$$(T^*\Phi)_g\circ \phi_s=\phi_s\circ (T^*\Phi)_g, \mbox{ for } g\in G \mbox{ and } s\in \R-\{0\}.$$

Thus, the previous comments imply that the actions $T^*\Phi$ and $\phi$ are compatible and the conditions of Theorem~\ref{2reduccion} hold. 
Now, we will reduce the Hamiltonian symplectic system $(T^*Q-0_Q,\omega_Q,Y^\ell)$, first by $T^*\Phi $ and then by the scaling symmetry $\phi.$ 
The objets obtained after the $G$-reduction are: 
\begin{itemize}
\item {\bf The reduced space:} The restriction of the canonical projection $\tau_Q^*:T^*Q\to Q$ to $T^*Q-0_Q$ is $G$-equivariant and therefore it  induces a fibration
$$\tau_Q^G:P:=(T^*Q-0_Q)/G\to Q/G$$ 
which is just the restriction of the Atiyah bundle $\tau_Q^G:T^*Q/G\to Q/G$ to $T^*Q/G-O,$ with $O$ the zero section of this vector bundle. 
The Poisson bracket $\{\cdot,\cdot\}_P$ on the space of functions $C^\infty((T^*Q-0_Q)/G)$ is characterized by 
$\{f\circ  \wp,g\circ \wp\}_{\omega_Q}= \{f,g\}_P\circ \wp$
with $$\wp:(T^*Q-0_Q)\to (T^*Q-0_Q)/G$$ the quotient map. 

\item {\bf The reduced Hamiltonian function:} The $G$-invariant function $Y^\ell$  induces a function $(Y^\ell)^G:(T^*Q-0_Q)/G\to \R$ such that 
$$(Y^\ell)^G(\wp(\alpha))=Y^\ell(\alpha).$$

\item {\bf The reduced dynamics:} The Hamiltonian vector field  $X_{Y\ell}^{\omega_Q}$ is $\wp$-projectable and its projection is just $$X_{(Y^\ell)^G}^{\{\cdot,\cdot\}_P}=\{\cdot,(Y^\ell)^G\}_P.$$

\item{\bf The scaling symmetry on the reduced space:} The scaling symmetry $\phi:(\R-\{0\})\times (T^*Q-0_Q)\to (T^*Q-0_Q)$ induces a scaling symmetry $\phi^G:(\R-\{0\})\times P\to P$ for the reduced Poisson Hamiltonian system $(P,\{\cdot,\cdot\}_P,(Y^\ell)^G)$ which is given by 
$$\phi^G(s,\wp(\alpha))=\wp(s\alpha),\mbox{ for } s\in \R-\{0\} \mbox{ and } \alpha\in T^*Q-0_Q.$$
\end{itemize}

Now, we will apply the second reduction step to the Poisson Hamiltonian  system $(P=(T^*Q-0_{Q})/G,\Pi_P, (Y^\ell)^G).$ with respect to the scaling symmetry $\phi^G:(\R-\{0\})\times P\to P.$ The reduced objects in this second reduction are:
\begin{itemize}
\item {\bf The reduced space:}
In this case, the reduced space is the projective bundle ${\Bbb P}(T^*Q/G)=((T^*Q-0_{Q})/G)/(\R-\{0\})$ of the vector bundle $(\tau^*_Q)^G: (T^*Q-0_G)/G\to Q/G$ (see Remark \ref{projPV}). 
\item {\bf The reduced Hamiltonian function:}
Denote by $\pi_L:L\to {\Bbb P}(T^*Q/G)$ the line bundle  associated with ${\bf p_P}:(T^*Q-0_{Q})/G\to {\Bbb P}(T^*Q/G).$ 
The section of the dual bundle $\pi_{L^*}:L^*\to {\Bbb P}(T^*Q/G)$ induced by the homogeneous function  $(Y^\ell)^G\in C^\infty((T^*Q-0_Q)/G)$ is the reduced Hamiltonian function. 

\item{\bf The reduced dynamics:} The Hamiltonian vector field $X_{(Y^\ell)^G}^{\{\cdot,\cdot\}_P}$ is ${\bf p_P}$-projectable and it determines the final reduced dynamics. 
\end{itemize}

\noindent {\bf The particular case of a Lie group.}  In what follows, we will show the previous  reduction process 
in the particular case when the initial manifold $Q$ is a Lie group $G.$ 
In such a case, one may use the left trivialization of the cotangent bundle $T^*G$ in order to identify $T^*G$ 
with the product manifold $G\times {\mathfrak g}^*$, where $({\mathfrak g},[\cdot,\cdot]_{\mathfrak g})$ is the Lie algebra of $G$, 
in such a way that the canonical projection $\tau_G^*:T^*G\to G$ is just the first projection $p_1:G\times {\mathfrak g}^*\to G.$ 
The left action  $\Phi:G\times G\to G$ on $G$ is the one defined by the group operation of $G$. We take the left invariant  
vector field $Y=\overleftarrow{\xi}$ on $G$ induced by  an element $\xi$ of ${\mathfrak g}.$ 
In the  first reduction with the cotangent lift of $\Phi$, the reduced space is $(T^*G-0_G)/G\cong {\mathfrak g}^*-\{0\}$ and   
the reduced function induced by $Y$ is the restriction to ${\mathfrak g}^*-\{0\}$ of the linear map ${\xi}^\ell$ associated with $\xi\in {\mathfrak g},$ i.e.
$$ {\xi}^\ell:{\mathfrak g}^*-\{0\}\to \R,\quad {\xi}^\ell(\alpha)=\alpha(\xi).$$
On the other hand,  the Lie-Poisson bracket $\{\cdot,\cdot\}_{{\mathfrak g}^*}$ on $(T^*G-0_G)/G\cong {\mathfrak g}^*-\{0\}$ is characterized by 
$$\{{\xi_1}^\ell,{\xi_2}^\ell\}_{{\mathfrak g}^*}=-{[\xi_1,\xi_2]}^\ell_{{\mathfrak g}},
\quad\mbox{ for all $\xi_1,\xi_2\in {\mathfrak g}.$}$$

 The scaling symmetry  on ${\mathfrak g}^*-\{0\}$ is just 
\begin{equation}\label{2sc}
\phi^G:(\R-\{0\})\times ({\mathfrak g}^*-\{0\})\to ({\mathfrak g}^*-\{0\}), \qquad (s,\alpha)\to s\alpha.
\end{equation} 

%Now, we will apply the second reduction step to the (Lie)-Poisson Hamiltonian system $({\mathfrak g}^*-\{0\},\{\cdot,\cdot\}_{{\mathfrak g}^*},\xi^\ell)$, with respect to the scaling symmetry $\phi^G:(\R-\{0\})\times ({\mathfrak g}^*-\{0\})\to ({\mathfrak g}^*-\{0\}).$
 Now, we apply the second reduction step to the (Lie)-Poisson Hamiltonian system $({\mathfrak g}^*-\{0\},\{\cdot ,\cdot\}_{{\mathfrak g}^*},\xi^\ell)$, 
 with respect to the scaling symmetry $\phi^G.$ In this case, the reduced space is the projective space ${\Bbb P}{\mathfrak g}^*$. 
 The corresponding line bundle $\pi_L:L:=({\mathfrak g}^*-\{0\}\times \R)/(\R-\{0\})\to {\Bbb P}{\mathfrak g}^*$ is defined by the action 
$$\widetilde\phi^G: (\R-\{0\})\times (({\mathfrak g}^*-\{0\})\times \R)\to ({\mathfrak g}^*-\{0\})\times \R,\qquad \widetilde\phi^G_s(\alpha,t)=(s\alpha,\frac{t}{s}).$$
The section of  the dual line bundle $\pi_{L^*}:L^*\to {\Bbb P}{\mathfrak g}^*$  associated with the linear map ${\xi}^\ell: {\mathfrak g}^*-\{0\}\to \R$ is 
$$h_{\xi}(p(\alpha))([(\alpha,t)])=t\alpha(\xi),$$
with $[(\alpha,t)]\in L,$ where $p:({\mathfrak g}^*-0)\to {\Bbb P}{\mathfrak g}^*$ is the quotient projection.

The Kirillov bracket on the projective space ${\Bbb P}{\mathfrak g}^*$ is characterized by 
$$
\begin{array}{rcl}[h_{\xi_1} ,h_{\xi_2}]_{{\Bbb P}{\mathfrak g}^*}(p(\alpha))([(\alpha,t)])&=&
-h_{\{\xi_1^\ell, \xi_2^\ell\}_{{\mathfrak g}^*}}(p(\alpha))([(\alpha,t)])=-t\{\xi_1^\ell, \xi_2^\ell\}_{{\mathfrak g}^*}(\alpha)\\[5pt]&=&
t\alpha([\xi_1,\xi_2]_{{\mathfrak g}})=h_{[\xi_1,\xi_2]_{{\mathfrak g}}}(p(\alpha))([(\alpha,t)]).
\end{array}$$

This structure on the line bundle $L\to {\Bbb P}{\mathfrak g}^*$ may be considered as the Kirillov version of the Lie-Poisson structure on ${\mathfrak g}^*$ and for this reason we will use the terminology {\it the  Lie-Kirillov structure on ${\Bbb P}{\mathfrak g}^*.$}

The reduced dynamics is determined by the $p$-projection of the Lie-Poisson Hamiltonian vector field associated with
the  linear function $\xi^\ell\in C^\infty({\mathfrak g}^*-\{0\})$, that is,
$$X_{{\xi}^\ell}^{\{\cdot,\cdot\}_{{\mathfrak g}^*}}=\{\cdot, {\xi}^\ell\}_{{\mathfrak g}^*}.$$

Note however, that this $p$-projection of $X_{\xi^\ell}^{\{\cdot,\cdot\}_{{\mathfrak g}^*}}$ is just the vector field 
$X_{h_\xi}\in {\mathfrak X}({\Bbb P}{\mathfrak g}^*)$, which is locally characterized by (\ref{21'}). 

	}

\end{example}

\section{Reduction of symplectic Hamiltonian systems using first the scaling symmetry and then the standard symmetries}\label{Section4}

As in the previous section, we have a  symplectic Hamiltonian system $(S,\omega,H)$  
with  a scaling symmetry $\phi^S:\R^\times \times S\to S$ and a symplectic $G$-symmetry $\Phi^S:G\times S\to S$ which are compatible. 
In what follows we describe the reduction process of the system $(S,\omega,H)$ in two steps, but in the following  order: 
the first reduction is obtained by the scaling symmetry and the second step is done using the standard symmetry.

First of all, we will show a reduction process for Kirillov structures  in the presence of a standard symmetry. 

\subsection{Reduction of Kirillov structures by standard symmetries}
Let $\pi_L:L\to K$ be a real line vector bundle with a Kirillov bracket $$[\cdot,\cdot]_{L^*}:\Gamma(L^*)\times \Gamma(L^*)\to \Gamma(L^*)$$
 on the space of the sections $\Gamma(L^*)$ of the dual vector bundle $\pi_{L^*}:L^*\to K$ of $\pi_L.$
Denote  by $0_L$  the zero section of $\pi_L$ and by $\phi^{L-0_L}:\R^\times \times (L-0_L)\to (L-0_L)$ the $\R^\times$-action 
associated with the principal bundle $p_{L-0_L}:(L-0_L)\to K$  whose line bundle is $\pi_L$ (see Appendix \ref{A}).
 
We suppose that $(\Phi^L:G\times L\to L,\Phi^K:G\times K\to K)$ is a representation  of a Lie group $G$ on the vector bundle $\pi_L:L\to K.$  
This means that $(\Phi^L_g,\Phi^K_g)$ is a vector bundle isomorphism for every $g\in G.$ So, we have a dual representation 
$(\Phi^{L^*}:G\times L^*\to L^*,\Phi^K:G\times K\to K)$ on the dual vector bundle $\pi_{L^*}:L^*\to K.$ Here, $\Phi^{L^*}:G\times L^*\to L^*$ 
is the representation of $G$ on $L^*$ induced by $\Phi^{L},$ 
given by 
$$\langle\Phi_g^{L^*}(\alpha),x\rangle=\langle\alpha,\Phi^L_{g^{-1}}(x)\rangle, \quad \mbox{ for all }\alpha\in L^* \mbox{ and } x\in L.$$

Note that 
$\pi_{L}\circ \Phi_g^{L}=\Phi_g^K\circ \pi_{L},$ which  implies that $\pi_{L^*}\circ \Phi_g^{L^*}=\Phi_g^K\circ \pi_{L^*},$ for all $g\in G.$

\begin{definition}
If the local Lie algebra structure $[\cdot,\cdot]_{L^*}$ is closed for $G$-equivariant sections of $\Gamma(L^*)$, 
we say that the representation $(\Phi^L:G\times L\to L,\Phi^K:G\times K\to K)$ {\it is compatible with the Kirillov structure.}
\end{definition}

We recall that a section $h:K\to L^*$ is \emph{$G$-equivariant} if 
$$\Phi^{L^*}_g\circ h=h\circ \Phi_g^{K},\mbox{ for all }g\in G.$$

On the other hand, since the principal bundle associated with $\pi_L$ is the   restriction  $p_{L-0_L}:L-0_L\to K$ of $\pi_L$ to $L-0_L$ (see Appendix \ref{A}), we deduce that 
\begin{equation}\label{pac}
\Phi_g^K\circ p_{L-0_L}=p_{L-0_L}\circ \Phi_g^L.
\end{equation}

In what follows, we suppose that the orbit space $K/G$ of the action $\Phi^K$ of $G$ on $K$ is a smooth quotient manifold. 
As a consequence, the orbit space $L/G$ is a real line bundle over $K/G$ whose fibers are isomorphic to the fibers of $\pi_L:L\to K.$ 

Denote by $0_{L/G}$  the zero section of the line bundle  $\pi_{L/G}:L/G\to K/G$.  The $\R^\times$-principal bundle  
$$p_{L/G-0_{L/G}}: (L/G-0_{L/G})\cong (L-0_L)/G\to K/G$$
 associated with $\pi_{L/G}$
is deduced from the $G$-equivariant principal bundle $p_{L-0_L}:(L-0_L)\to K.$

Moreover, the principal actions $\phi^{L-0_{L}}$ and $\phi^{(L-0_{L})/G}$  of $\R^\times$ on ${L-0_{L}}$ and ${(L-0_{L})/G},$ respectively, are related by 
\begin{equation}\label{tp}
 \wp_{{L-0_{L}}}\circ\phi_s^{L-0_{L}}=\phi_s^{{(L-0_{L})/G}}\circ \wp_{{L-0_{L}}}, \quad \mbox{ for }s\in \R^\times 
 \end{equation}
where  $\wp_{{L-0_{L}}}:{{L-0_{L}}}\to {({L-0_{L}})}/G$ is the quotient map. 

On the other hand, the dual vector bundle $\pi_{L/G}^*:(L/G)^*\to K/G$ is isomorphic to the line bundle $\pi_{L^*/G}:L^*/G\to K/G$ deduced from the $G$-equivariant  dual vector bundle $\pi_{L^*}:L^*\to K$ of $\pi_L$ for the pair of actions $(\Phi^{L^*}, \Phi^K).$ The following diagram summarizes the previous comments 

\[
\xymatrix{
L^*\ar@(ul,u)[]^{\Phi^{L^*}}\ar[d]^{\wp_{L^*}}\ar[rr]^{\pi_{L^*}}&&K\ar@(ul,u)[]^{\Phi^K}\ar[d]^{\wp_K}\\
L^*/G\ar[rr]^{\pi_{L^*/G}}&&K/G}
\]

Now we can prove the following general result that will be used in the following.
\begin{theorem}\label{Kirillov}\rm
Let  $[\cdot,\cdot]_{L^*}:\Gamma(L^*)\times \Gamma(L^*)\to \Gamma(L^*)$ be  a  Kirillov structure on the real line bundle $\pi_L:L\to K.$ 
Suppose that  $(\Phi^L,\Phi^K)$ is a compatible representation of $G$ on $L.$ Then: 
\begin{enumerate}
\item  There is a one-to-one correspondence between $G$-equivariant sections $h:K\to L^*$ of $\pi_{L^*}:L^* \to K$ with respect to $(\Phi^{L^*},\Phi^K)$ 
%and $G$-invariant homogeneous functions $H:(L-0_L)\to \R$ with respect to the principal bundle $p_{L-0_L}:(L-0_L)\to K$. 
%There is a one to one correspondence between
%$G$-invariant homogeneous functions $H:(L-0_L)\to \R$ with respect to the principal bundle $p_{L-0_L}:(L-0_L)\to K$
and sections ${h}^G:K/G\to L^*/G$ of the line bundle $\pi_{L^*/G}:L^*/G\to K/G.$
\item On the space of sections of $\pi_{L^*/G}:L^*/G\to K/G$ there is a Kirillov structure $[\cdot,\cdot]_{L^*/G}$, characterized by 
$$[h_1^G,h_2^G]_{L^*/G}=([h_1,h_2]_{L^*})^G,$$
for all $G$-equivariant sections $h_1,h_2$ of $\pi_{L^*}.$

\item If $h:K\to L^*$ is a $G$-equivariant section of $\pi_{L^*},$ then  the symbol $X_h^{[\cdot,\cdot]_{L^*}}\in {\mathfrak X}(K)$  associated with the derivation $[\cdot, h]_{L^*}$ 
is $G$-invariant with respect to $\Phi^K.$ Moreover, if $\wp_K:K\to K/G$ is the quotient map, the $\wp_K$-projection of $X_h^{[\cdot,\cdot]_{L^*}}\in {\mathfrak X}(K)$  is the symbol $X_{h^G}^{[\cdot,\cdot]_{L^*/G}}\in {\mathfrak X}(K/G)$ of the derivation $[\cdot,h^G]_{L^*/G}$.
\end{enumerate}
\end{theorem}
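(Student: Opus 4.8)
The strategy is to pull everything back to the slit dual bundle $L^*-0_{L^*}$, where the Kirillov structure becomes a homogeneous Poisson structure and $G$-equivariant sections become $G$-invariant homogeneous functions; then the three items reduce to standard Poisson reduction statements that we can cite or verify directly. More precisely, by the discussion preceding Definition~(Kirillov structure) and by Theorem~\ref{reductionPoisson} (read in reverse, as in the Remark following it), the line bundle $\pi_{L^*}:L^*\to K$ with its Kirillov bracket corresponds to the $\R^\times$-principal bundle $p_{L-0_L}:L-0_L\to K$ — equivalently, to the homogeneous Poisson manifold $(L^*-0_{L^*},\Pi)$ whose Poisson bracket of homogeneous functions encodes $[\cdot,\cdot]_{L^*}$ via $H_{[h_1,h_2]_{L^*}}=-\{H_{h_1},H_{h_2}\}$. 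The dual representation $\Phi^{L^*}:G\times L^*\to L^*$ restricts to an action on $L^*-0_{L^*}$ which, since each $\Phi^{L^*}_g$ is a vector bundle isomorphism covering $\Phi^K_g$, commutes with the $\R^\times$-action and is free and proper (it is free and proper on $K$ by hypothesis, and the bundle structure transports this upstairs).

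\textbf{Item (1).} I would first observe that a section $h:K\to L^*$ is $G$-equivariant precisely when its associated homogeneous function $H_h:L^*-0_{L^*}\to\R$ is $G$-invariant for $\Phi^{L^*}$: this is immediate from the defining relation $H_h(\alpha)=\langle\alpha,h(\pi_{L^*}(\alpha))\rangle$ (in the notation of Appendix~\ref{A}) together with $\langle\Phi^{L^*}_g(\alpha),x\rangle=\langle\alpha,\Phi^L_{g^{-1}}(x)\rangle$. Hence $G$-equivariant sections of $\pi_{L^*}$ biject with $G$-invariant homogeneous functions on $L^*-0_{L^*}$, which in turn biject (passing to the quotient, and using that the $G$- and $\R^\times$-actions commute) with homogeneous functions on $(L^*-0_{L^*})/G\cong L^*/G-0_{L^*/G}$, i.e. with sections of $\pi_{L^*/G}:L^*/G\to K/G$. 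Composing these two bijections gives item (1), and unravelling the identifications shows the correspondence is $h\mapsto h^G$ with $h^G\circ\wp_K = \wp_{L^*}\circ h$.

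\textbf{Items (2) and (3).} Because the representation is compatible with the Kirillov structure, the bracket $\{\cdot,\cdot\}$ on $L^*-0_{L^*}$ is closed on $G$-invariant homogeneous functions; being already $G$-invariant, such a bracket descends to the quotient $(L^*-0_{L^*})/G$, giving a Poisson structure there which is still homogeneous (the descent commutes with the $\R^\times$-action by~\eqref{tp}). Applying Theorem~\ref{reductionPoisson} to this quotient Poisson manifold produces exactly the claimed Kirillov structure $[\cdot,\cdot]_{L^*/G}$ on $\pi_{L^*/G}$, and the characterizing formula $[h_1^G,h_2^G]_{L^*/G}=([h_1,h_2]_{L^*})^G$ follows by chasing $H_{[h_1,h_2]_{L^*}}=-\{H_{h_1},H_{h_2}\}$ through the two quotient maps. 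For item (3): by Theorem~\ref{reductionPoisson}(c) the symbol $X_h^{[\cdot,\cdot]_{L^*}}$ is the $p_{L-0_L}$-projection of the Poisson–Hamiltonian vector field $X_{H_h}$ on $L^*-0_{L^*}$; since $H_h$ is $G$-invariant and $\Pi$ is $G$-invariant, $X_{H_h}$ is $G$-invariant, hence so is its projection $X_h^{[\cdot,\cdot]_{L^*}}$ on $K$; projecting once more by $\wp_K$ and using Theorem~\ref{reductionPoisson}(c) for the quotient Poisson manifold identifies the result with $X_{h^G}^{[\cdot,\cdot]_{L^*/G}}$, the needed commuting-square being a diagram chase among the four maps $\wp_{L^*},\wp_K,p_{L-0_L},p_{L/G-0_{L/G}}$.

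\textbf{Main obstacle.} The only genuinely non-formal point is establishing cleanly that the quotient $(L^*-0_{L^*})/G$ really is the slit dual of $L^*/G$ and that the Poisson bracket descends there as a \emph{homogeneous} Poisson structure compatible with all the identifications of Appendix~\ref{A}; once this bookkeeping is in place, everything else is an application of Theorem~\ref{reductionPoisson} plus diagram chasing. I would therefore spend most of the written proof making the identifications $(L^*-0_{L^*})/G\cong L^*/G-0_{L^*/G}$ and $L^*/G\cong(L/G)^*$ precise and checking $G$-invariance of $\Pi$, then invoke Theorem~\ref{reductionPoisson} twice.
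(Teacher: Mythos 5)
Your route --- Poissonize the Kirillov bracket to a homogeneous Poisson structure on the slit bundle, reduce by $G$ there, and apply Theorem \ref{reductionPoisson} to the quotient --- is genuinely different from the paper's proof, which never leaves the base: the paper defines $[h_1^G,h_2^G]_{L^*/G}=([h_1,h_2]_{L^*})^G$ directly and extracts projectability of the symbol from the identity $[(f\circ\wp_K)h_1,h_2]_{L^*}=(f\circ\wp_K)[h_1,h_2]_{L^*}+X_{h_2}^{[\cdot,\cdot]_{L^*}}(f\circ\wp_K)h_1$, using only that the other terms in it are $G$-equivariant. Two remarks on your version before the main point. First, a dualization slip: with the conventions of Appendix \ref{A}, sections of $\pi_{L^*}$ correspond to homogeneous functions on the principal bundle $L-0_L$ via $H_h(x)=\langle h(\pi_L(x)),x\rangle$; your pairing $\langle\alpha,h(\pi_{L^*}(\alpha))\rangle$ with $\alpha\in L^*$ pairs two covectors, and the Poissonization should live on $L-0_L$, not on $L^*-0_{L^*}$. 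This is cosmetic. Second, you lean on the converse of Theorem \ref{reductionPoisson} (existence of a homogeneous Poisson structure inducing a given Kirillov bracket), which the paper only cites (Marle, Vitagliano--Wade) and deliberately avoids using; that is legitimate, but it makes your argument rest on an external result.

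The genuine gap is in the descent step. You assert that $\Pi$ is $G$-invariant and that the bracket, ``being already $G$-invariant'', descends to a Poisson structure on the quotient. Since the differentials $dH_{h}$, $h\in\Gamma(L^*)$, span the cotangent space at every point of $L-0_L$, $G$-invariance of $\Pi$ is \emph{equivalent} to $[(\Phi^{L^*}_g)_*h_1,(\Phi^{L^*}_g)_*h_2]_{L^*}=(\Phi^{L^*}_g)_*[h_1,h_2]_{L^*}$ for \emph{all} sections $h_1,h_2$, i.e.\ full equivariance of the bracket; the compatibility hypothesis only says that the bracket of two $G$-equivariant sections is again $G$-equivariant, which is strictly weaker. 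Consequently you cannot conclude that $(L-0_L)/G$ carries a Poisson bivector merely from closedness on invariant fiberwise-linear functions: to define a Poisson structure on the quotient you need $\{F_1,F_2\}$ to be $G$-invariant for \emph{all} $G$-invariant $F_1,F_2$. The gap is bridgeable --- at each $x\in L-0_L$ the differentials $dH_h(x)$ with $h$ equivariant span the annihilator of the tangent space to the $G$-orbit, so any invariant $F$ has $dF(x)$ in their span with coefficients that transport along the orbit, and invariance of $\{F_1,F_2\}$ then follows from invariance of the brackets $\{H_{h_i},H_{h_j}\}$ --- but this spanning argument is precisely the non-formal content your proof needs and does not supply. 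The same unjustified appeal to $G$-invariance of $\Pi$ underlies your derivation of item (3).
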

\begin{proof}
From the general theory of representations of Lie groups, we have that there is a one-to-one correspondence between $G$-equivariant sections 
$h:K\to L^*$ of $\pi_{L^*}:L^* \to K$ with respect to $(\Phi^{L^{*}},\Phi^{K})$ and sections ${h}^G:K/G\to L^*/G$ of the line bundle $\pi_{L^*/G}:L^*/G\to K/G$ such that 
$$h^G({\wp_K}(x))={\wp_{L^*}}(h(x)),\;\; \mbox{ for all } x\in K,$$
where ${\wp_{L^*}}:L^*\to L^*/G$ is the quotient  map.
Thus, we can induce a  bracket $[\cdot,\cdot]_{L^*/G}:\Gamma(L^*/G)\times \Gamma(L^*/G)\to \Gamma(L^*/G)$ characterized by 
\begin{equation}\label{hG}
[h_1^G,h_2^G]_{L^*/G}=([h_1,h_2]_{L^*})^G,
\end{equation}
where $h_1,h_2:L^*\to K$ are $G$-equivariant sections of $\pi_{L^*}:L^*\to K.$

It is clear that $[\cdot,\cdot]_{L^*/G}$ is a Lie algebra structure. On the other hand, if $f\in C^\infty(K/G)$, then 
$$
[(f\circ \wp_K)h_1,h_2]_{L^*}=(f\circ \wp_K)[h_1,h_2]_{L^*} + X_{h_2}^{[\cdot,\cdot]_{L^*}}(f\circ \wp_K)h_1,$$
for all $h_1,h_2\in \Gamma(L^*).$ 

Now, by hypothesis, the sections $[(f\circ \wp_K)h_1,h_2]_{L^*}$ and $(f\circ \wp_K)[h_1,h_2]_{L^*}$ are $G$-equivariant. 
Thus, $$X_{h_2}^{[\cdot,\cdot]_{L^*}}(f\circ \wp_K)h_1$$ 
is $G$-equivariant too, which implies that the function $X_{h_2}^{[\cdot,\cdot]_{L^*}}(f\circ \wp_K)$ is $\wp_K$-basic. 

So, we have proved that the vector field 
$X_{h_2}^{[\cdot,\cdot]_{L^*}}$ is $\wp_K$-projectable over a vector field $X_{h_2^G}^{[\cdot,\cdot]_{L^*/G}}(f\circ \wp_K)$ on $K/G$ and, in addition, 
$$\begin{array}{rcl}
[fh_1^G,h_2^G]_{L^*/G}&=&f[h_1^G,h_2^G]_{L^*/G} + X_{h^G_2}^{[\cdot,\cdot]_{L^*/G}}(f)h_1^G.
\end{array}$$

Therefore, $[\cdot,h_2^G]_{L^*/G}$  is a derivation and its symbol is just the ${\wp_K}$-projection of the symbol of $[\cdot, h_2]_{L^*}.$ This finishes the proof of the theorem. 
\end{proof}

The following diagram summarizes this reduction process 

\[
\xymatrix{
L^*\ar@(ul,u)[]^{G}\ar[d]^{\wp_{L^*}}\ar[rr]^{\pi_{L^*}}&&(K,\ar@(ul,u)[]^{G}X_{h}^{[\cdot,\cdot]_{L^*}})\ar[ll]<1ex>^h\ar[d]^{\wp_K}\\
L^*/G\ar[rr]^{\pi_{L^*/G}}&&(K/G, X_{h^G}^{[\cdot,\cdot]_{L^*/G}})\ar[ll]<1ex>^{h^G}}
\]

\begin{remark} When the real line bundle $\pi_L:L\to K$ is trivial, the previous theorem is just  the reduction process of Jacobi manifolds given in \cite{Nunes}. 
\end{remark}

\subsection{The first step: Reduction by a scaling symmetry}
Now,  we start with the scaling reduction process of the symplectic Hamiltonian system $(S,\omega,H).$ In this case we have (see Section \ref{ssp}):
\begin{itemize}
\item 
The reduced space $C=S/\R^\times$ admits a contact distribution ${\mathcal D}.$ 
\item The principal bundle ${\bf p_S}:S\to C$ is isomorphic to the restriction of $\pi_{{\mathcal D}^o}: {\mathcal D}^o\to C$ to $({\mathcal D}^o-0_C),$ 
where ${\mathcal D}^o$ is the annihilator of ${\mathcal D}$ and $0_C$ is its  zero section. Therefore, the associated real line bundle, 
under this isomorphism, is $\pi_{{\mathcal D}^o}:{\mathcal D}^o\to C.$ Moreover, there is a one-to-one correspondence between the sections $h:C\to ({\mathcal D}^o)^*$  
of the dual vector bundle of $\pi_{{\mathcal D}^o}$ and  the homogeneous functions $H_h:S\to \R$ on the symplectic manifold $S$. 
\item On the space $\Gamma(({\mathcal D}^o)^*)$ of the sections of the dual vector bundle  of $\pi_{{\mathcal D}^o},$ we have a Kirillov bracket $$[\cdot,\cdot]_{({\mathcal D}^o)^*}:\Gamma(({\mathcal D}^o)^*)\times \Gamma(({\mathcal D}^o)^*)\to \Gamma(({\mathcal D}^o)^*)$$
such that 
$$H_{[h_1,h_2]_{({\mathcal D}^o)^*}}=-\{H_{h_1},H_{h_2}\}_S,$$
for all $h_1,h_2\in \Gamma(({\mathcal D}^o)^*),$ where $\{\cdot,\cdot\}_S$ is the Poisson bracket associated with the symplectic structure on~$S.$ 

\item The Hamiltonian vector field $X_H^\omega\in {\mathfrak X}(S)$ of $H$ with respect to the symplectic structure  $\omega$ is ${\bf p_S}$-projectable on $C$ 
and its projection is   the symbol $X_{h_H}^{[\cdot,\cdot]_{({\mathcal D}^o)^*}}\in {\mathfrak X}(C)$ of the derivation $[\cdot,h_H]_{({\mathcal D}^o)^*}.$

\item A $G$-action on $C$. In fact, the relation $\Phi^S_g\circ \phi^S_s=\phi^S_s\circ \Phi^S_g$, for all $g\in G$ and $s\in \R^\times$,   
implies that   $\Phi_g^S:S\to S$ is $\R^\times$-equivariant and, therefore, it induces a principal action $\Phi^C:G\times C\to C$ such that 
\begin{equation}\label{aC}
\Phi^C_g\circ {\bf p_S}={\bf p_S}\circ \Phi_g^S.
\end{equation} Moreover, 
 \begin{equation}\label{inv-De}
 T\Phi^S_g\circ \Delta=\Delta\circ \Phi^S_g
  \end{equation}
  with $\Delta$ the infinitesimal generator of the scaling symmetry $\phi^S.$ Using this relation and that $(\Phi^S_g)^*\omega=\omega,$ 
  we conclude the $G$-invariance of the $1$-form $\lambda=-i_{\Delta}\omega,$ i.e.
 \begin{equation}\label{inv-lambda}
 (\Phi_g^S)^*(\lambda)= -(\Phi_g^S)^*(i_{\Delta}\omega)=-i_{\Delta}\omega=\lambda.
 \end{equation}
Therefore,
 \begin{equation}\label{lambda0}
 T\Phi_g^S(\left\langle\lambda\right\rangle^o)=\left\langle\lambda\circ \Phi^S_g\right\rangle^o,\mbox{ for all }g\in G,
 \end{equation}
 where $T\Phi^S:G\times TS\to TS$ is the tangent lift of the action of $\Phi^S$.   
 In other words, $\tilde{\mathcal D}=\left\langle\lambda\right\rangle^o$ is a $G$-invariant distribution. 
 So, since $\Phi_g^S\circ \phi^S_s=\phi^S_s\circ \Phi_g^S,$  we deduce that the contact distribution ${\mathcal D}=T{\bf p_S}(\tilde{\mathcal D})$ is $G$-invariant, i.e.
 $$
 T\Phi_g^C({\mathcal D})={\mathcal D}.
$$
This implies that the cotangent lift $T^*\Phi^C$ of the action $\Phi^C$ preserves the annihilator ${\mathcal D}^o$ of the contact distribution. 
Therefore, we have a representation $(\Phi^{{\mathcal D}^o}:=(T^*\Phi^C)_{|{\mathcal D}^o}, \Phi^C)$ of $G$ on the real line bundle $\pi_{{\mathcal D}^o}:{\mathcal D}^o\to C.$ 

\end{itemize}

\subsection{The second step: Reduction by standard symmetries}
Now, we apply the second reduction process with  the representation $(\Phi^{{\mathcal D}^o}:=(T^*\Phi^C)_{|{\mathcal D}^o}, \Phi^C)$. 
To do so, we will use Theorem~\ref{Kirillov} on the reduction of Kirillov structures.

\begin{theorem}\label{p2}\rm
Let $(S,\omega,H)$ be a symplectic Hamiltonian system with a scaling symmetry $\phi^S:\R^\times \times S\to S$, $G$ a  Lie group and  $\Phi^S:G\times S\to S$ a symplectic $G$-symmetry  which is  compatible with $\phi^S$. Then:

\begin{enumerate}
\item If $(C=S/\R^\times,{\mathcal D})$ is the contact manifold  induced by the scaling symmetry $\phi^S$, then we have  a representation $(\Phi^{{\mathcal D}^o}:G\times {\mathcal D}^o\to {\mathcal D}^o,{\Phi}^C:G\times C\to C)$ on the line bundle $\pi_{{\mathcal D}^o}:{\mathcal D}^o\to C$ such that  the corresponding quotient vector bundle $\pi_{{\mathcal D}^o/G}:{\mathcal D}^o/G\to C/G$ is a real line bundle. Moreover, there is a one-to-one correspondence between  the $G$-equivariant sections $h:C\to ({\mathcal D}^o)^*$ of the dual vector bundle of $\pi_{{\mathcal D}^o}:{\mathcal D}^o\to C$ and  sections  $h^G:C/G\to ({\mathcal D}^o)^*/G$ of the dual vector bundle of  $\pi_{{\mathcal D}^o/G}$.
%\begin{equation}\label{acC}
%\Phi^C_g\circ p_S=p_S\circ \Phi_g^S\mbox{ and }
%\Phi^C_g\circ \pi_{{\mathcal D}^o}=\pi_{{\mathcal D}^o}\circ \Phi_g^{{\mathcal D}^o}, \mbox{ for all }g\in G.
%\end{equation}

%\item If $\pi_L:L\to C$ is the associated vector bundle of the principal bundle  $p_S:S\to C$ and $\pi:S\times \R\to L=(S\times \R)/\R^\times$ is the corresponding quotient map,  then the map $\Phi^{L}:G\times L\to L$ given by 
%
%\begin{equation}\label{acL}
%\Phi^L_g(\pi(x,t))=(\Phi^S_g(x),t)\mbox{ for all }x\in S \mbox{ and } t\in \R,
%\end{equation}
% is an action  such that $\pi_L$ is $G$-equivariant with respect to $(\Phi^L,\Phi^C)$. Thus, 
%we can induce a new line bundle $\pi_{L/G}:L/G\to C/G$ on the corresponding quotient spaces. 
%
%\item If $\pi_L^*:L^*\to C$ is the dual vector bundle of $\pi_L$ and $\Phi^{L^*}:G\times L^*\to L^*$ is  the dual action of $\Phi^L$, the $\pi^*_{L}$ is equivariant with respect to $(\Phi^{L^*},\Phi^C).$ Therefore, we can induce a line bundle $\pi_{L^*/G}:L^*/G\to C/G$ which is isomorphic with the dual vector bundle $\pi_{L/G}^*:(L/G)^*\to C/G$ of $\pi_{L/G}:L/G\to C/G$. 
%

\item  There is a  Kirillov bracket 
$[\cdot,\cdot]_{({\mathcal D}^o)^*/G}:\Gamma(({\mathcal D}^o)^*/G)\times \Gamma(({\mathcal D}^o)^*/G)\to \Gamma(({\mathcal D}^o)^*/G)$ 
on the space $\Gamma(({\mathcal D}^o)^*/G)$ of the sections of the dual vector bundle  $\pi_{({\mathcal D}^o)^*/G}:({\mathcal D}^o)^*/G\to C/G$,  
such that
$$([h_1h_2]_{({\mathcal D}^o)^*})^G=[h_1^G,h_2^G]_{({\mathcal D}^o)^*/G},$$
for $h_1,h_2\in\Gamma(({\mathcal D}^o)^*)$ $G$-invariant sections. 

\item  If $h_H:C\to ({\mathcal D}^o)^*$ is the section of $\pi_{({\mathcal D}^o)^*}:({\mathcal D}^o)^*\to C$ induced from $H$, 
the symbol $X_{h_H}^{[\cdot,\cdot]_{({\mathcal D}^o)^*}}\in{\mathfrak X}(C)$ of  the derivation $[\cdot, h_H]_{({\mathcal D}^o)^*}$
 is $G$-invariant and the corresponding vector field on $C/G$ is just 
 the symbol $X_{h_H^G}^{[\cdot,\cdot]_{({\mathcal D}^o)^*/G}}\in {\mathfrak X}(C/G)$ of the derivation $[\cdot, h_H^G]_{({\mathcal D}^o)^*/G}$. 
 Thus, if $\wp_C:C\to C/G$ is the quotient map, the Hamiltonian vector field $X_H^\omega\in {\mathfrak X}(S)$ of $H$ 
 with respect to the symplectic structure $\omega$ is $(\wp_C\circ {\bf p_S})$-projectable  on   
 $C/G$ and its projection is $X^{[\cdot,\cdot]_{({\mathcal D}^o)^*/G}}_{h_H^G}\in {\mathfrak X}(C/G).$ 
\end{enumerate}

\end{theorem}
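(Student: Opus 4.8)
The plan is to derive Theorem \ref{p2} by composing the scaling reduction recalled in Section \ref{ssp} with Theorem \ref{Kirillov}, applied to the representation $(\Phi^{{\mathcal D}^o},\Phi^C)$ of $G$ on $\pi_{{\mathcal D}^o}\colon{\mathcal D}^o\to C$ constructed in the preceding subsection. The three facts that have to be checked before Theorem \ref{Kirillov} can be invoked are: that $\Phi^C$ is a free and proper $G$-action (so that $C/G$ is a manifold and $\pi_{{\mathcal D}^o/G}\colon{\mathcal D}^o/G\to C/G$ a real line bundle); that the representation is \emph{compatible with the Kirillov structure}, i.e.\ that $[\cdot,\cdot]_{({\mathcal D}^o)^*}$ is closed on $G$-equivariant sections; and that the Hamiltonian section $h_H$ is $G$-equivariant. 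The first is immediate, since $\Phi^S$ is free and proper and commutes with the scaling action, so the $G$-action it induces on $C=S/\R^\times$ inherits these properties.

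The key preliminary step is to identify $G$-equivariant sections of $\pi_{({\mathcal D}^o)^*}$ with $G$-invariant homogeneous functions on $S$. By Appendix \ref{A} the sections $h$ of $\pi_{({\mathcal D}^o)^*}$ are in bijection with the homogeneous functions $H_h\colon S\to\R$, via the bundle isomorphism $\iota\colon S\to{\mathcal D}^o-0_C$ determined by the $1$-form $\lambda=-i_\Delta\omega$. Because $\lambda$ is $G$-invariant (see \eqref{inv-lambda}) and $\Phi^C_g\circ{\bf p_S}={\bf p_S}\circ\Phi^S_g$ (see \eqref{aC}), a short computation shows that $\iota$ intertwines $\Phi^S$ with $\Phi^{{\mathcal D}^o}=(T^*\Phi^C)_{|{\mathcal D}^o}$; combining this with the defining property of the dual representation $\Phi^{({\mathcal D}^o)^*}$ yields that $h$ is $G$-equivariant if and only if $H_h\circ\Phi^S_g=H_h$ for all $g\in G$. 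In particular $h_H$ is $G$-equivariant, because $H$ is $G$-invariant by hypothesis (ii).

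With this dictionary, compatibility is formal: if $h_1,h_2$ are $G$-equivariant then $H_{h_1},H_{h_2}$ are $G$-invariant, and since each $\Phi^S_g$ is a symplectomorphism the Poisson bracket $\{H_{h_1},H_{h_2}\}_S$ is $G$-invariant as well; as $H_{[h_1,h_2]_{({\mathcal D}^o)^*}}=-\{H_{h_1},H_{h_2}\}_S$, the section $[h_1,h_2]_{({\mathcal D}^o)^*}$ is $G$-equivariant. Theorem \ref{Kirillov} then applies and furnishes: the one-to-one correspondence $h\leftrightarrow h^G$ of item (1); the reduced Kirillov bracket $[\cdot,\cdot]_{({\mathcal D}^o)^*/G}$ with $([h_1,h_2]_{({\mathcal D}^o)^*})^G=[h_1^G,h_2^G]_{({\mathcal D}^o)^*/G}$ of item (2); and, taking $h=h_H$ in item (3) of Theorem \ref{Kirillov}, that the symbol $X_{h_H}^{[\cdot,\cdot]_{({\mathcal D}^o)^*}}$ is $G$-invariant and $\wp_C$-projects onto $X_{h_H^G}^{[\cdot,\cdot]_{({\mathcal D}^o)^*/G}}$.

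It remains to chain the two reductions for the projectability assertion of item (3). The scaling reduction of Section \ref{ssp} already gives that $X_H^\omega$ is ${\bf p_S}$-projectable onto $C$ with projection $X_{h_H}^{[\cdot,\cdot]_{({\mathcal D}^o)^*}}$, and we have just seen that this vector field $\wp_C$-projects onto $X_{h_H^G}^{[\cdot,\cdot]_{({\mathcal D}^o)^*/G}}$; hence $X_H^\omega$ is $(\wp_C\circ{\bf p_S})$-projectable with the asserted projection. The only real obstacle is the bookkeeping in the second paragraph — checking that $\iota$ intertwines the $\R^\times$-principal action on $S$ with the cotangent-lift action on ${\mathcal D}^o$, so that ``$G$-equivariant section'' means exactly ``$G$-invariant homogeneous function''; once this translation is in place, everything else is a formal consequence of Theorem \ref{Kirillov} together with the scaling reduction.
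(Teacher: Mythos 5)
Your proposal is correct and follows essentially the same route as the paper: identify $G$-equivariant sections of $\pi_{({\mathcal D}^o)^*}$ with $G$-invariant homogeneous functions on $S$ via the appendix correspondence, deduce closure of $[\cdot,\cdot]_{({\mathcal D}^o)^*}$ on equivariant sections from the $G$-invariance of $\{H_{h_1},H_{h_2}\}_S$ (using that $\Phi^S$ is symplectic), and then invoke Theorem \ref{Kirillov} and chain the two projections. The extra details you spell out (freeness/properness of $\Phi^C$, the intertwining of $\iota$ with the actions) are left implicit in the paper but are consistent with its argument.
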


\begin{proof}
  We have the representation $(\Phi^{{\mathcal D}^o}, \Phi^C)$ of $G$ on the real line bundle $\pi_{{\mathcal D}^o}:{\mathcal D}^o\to C$ defined previously. 

Now, we will prove that if $h_1,h_2:C\to ({\mathcal D}^o)^*$ are $G$-equivariant sections of $\pi_ {({\mathcal D}^o)^*}$,
  then the bracket  $[h_1,h_2]_{({\mathcal D}^o)^*}$ is also  $G$-equivariant. From (\ref{Hh}),  (\ref{hH}) (see Appendix \ref{A}) 
  and the commutation of the actions $\Phi^S$ and $\phi^S,$ we deduce   that  $h:C\to ({\mathcal D}^o)^*$ is a $G$-equivariant section 
  if and only if the corresponding homogeneous function $H_h:S\to \R$ is invariant with respect to the action $\Phi^S.$

So, if $h_1,h_2:C\to ({\mathcal D}^o)^*$ are $G$-equivariant, then $H_{h_1}$ and $H_{h_2}$ are $G$-invariant with respect to $\Phi^S$ and, since the action $\Phi^S$ is symplectic, we have that the function $\{H_{h_1},H_{h_2}\}_S$ is $G$-invariant. Therefore, 
$$
H_{[h_1,h_2]_{({\mathcal D}^o)^*}}(\Phi^S_g(x))=-\{H_{h_1},H_{h_2}\}_S(\Phi^S_g(x))=-\{H_{h_1},H_{h_2}\}_S(x)=H_{ [h_1,h_2]_{({\mathcal D}^o)^*}}(x),
$$
for all $x\in S$. In conclusion, $[h_1,h_2]_{({\mathcal D}^o)^*}$ is $G$-equivariant.

Now, applying Theorem \ref{Kirillov},  we deduce the result. 
\end{proof}

 The following diagram shows both reduction processes together 
 
  \[
\xymatrix{&\R&\\
(S,\omega,X_H^\omega)\ar[ur]^H\ar[rr]^{\kern-20pt {\bf p_S}}&&(C=S/\R^\times,{\mathcal D},X_{h_H}^{[\cdot,\cdot]_{({\mathcal D}^o)^*}})\ar[dd]^{\wp_C}\ar@/^/[dl]^{h_H}\\
&({\mathcal D}^o)^*\ar[dd]^{\wp_{({\mathcal D}^o)^*}}\ar@<1ex>[ur]^{\pi_{({\mathcal D}^o)^*}}&\\ &&(C/G,[\cdot,\cdot]_{({\mathcal D}^o)^*/G}, 
X_{h_H^G}^{[\cdot,\cdot]_{({\mathcal D}^o)^*/G}})\ar@/^/[dl]^{h_H^G}
\\&({\mathcal D}^o)^*/G\ar@<1ex>[ur]^{\pi_{({\mathcal D^o})^*/G}}&}
\]

Now we illustrate the reduction processes using the two examples considered above.

\begin{example}[{\bf Continuing Example~\ref{ex:2dho1}: The 2d harmonic oscillator reduced first by a scaling and then by a standard symmetry}]\label{ex:2dho2}\rm
We consider again the example of a 2-dimensional harmonic oscillator (see Examples~\ref{ex:STQho} and~\ref{ex:2dho1}). 
In Example ~\ref{ex:2dho1}  we have shown how to apply the reduction 
process by first using the standard symmetry and then the scaling symmetry. Now, we take the reverse order.

We recall that in this example we have:
\begin{enumerate}
\item A standard rotational $S^1$-symmetry, with infinitesimal generator $\xi_S=x\partial_{y}-y\partial_{x}+p_{x}\partial_{p_{y}}-p_{y}\partial_{p_{x}}$ where $(x,y,p_x,p_y)$ are coordinates on $S=T^*(\R^2-\{(0,0)\})$. Using the identification $\R^+\times S^1\times \R^+ \times S^1\cong T^*(\R^2-\{(0,0)\})-0_{\R^2-\{(0,0)\}}$  the local expression of $\xi_S$ is 
$$\xi_{S}=\partial_\theta +\partial_{\theta'},$$
where $(r,\theta,r',\theta')$ are polar coordinates on $\R^{+}\times S^1\times \R^{+} \times S^1.$ 
 \item A scaling $\R^{+}$-symmetry, with generator $$\Delta=\frac{1}{2} (r{\partial_r} + r'{\partial_{r'}}).$$
\end{enumerate}

As seen in Example~\ref{ex:STQho}, by applying first the scaling $\R^+$-symmetry, we obtain: 
\begin{itemize}
\item {\bf The reduced space:} It 
is  $\R^+\times S^1\times S^1$, with the quotient map 
$${\bf p}:\R^+\times S^1\times \R^+\times S^1\to \R^+\times S^1\times S^1, \quad {\bf p}(\rho,\theta,\rho',\theta')=(\rho',\theta,\theta').$$ 
The Jacobi structure   on $C=\R^+\times S^1\times S^1$ is given by (\ref{JC}).
\item {\bf The reduced Hamiltonian function:} The reduced Hamiltonian function is given by  $$H_{|\R^+\times S^1\times S^1}(\rho',\theta,\theta')=\displaystyle\frac{1}{2}((\rho')^2+1).$$ 
\item {\bf The reduced dynamics:} It is given by the vector field on $\R^+\times S^1\times S^1$ obtained by the ${\bf p}$-projection  
$${\bf p}_*(X^{\omega_Q}_H)=(1+(\rho')^{2})\cos(\theta-\theta')\partial_{\rho'}+\sin(\theta-\theta')(\frac{1}{\rho'}\partial_{\theta'}+\rho'\partial_{\theta}),$$
which is just the contact Hamiltonian vector field $X_{H_{|\R^+\times S^1\times S^1}}^{\{\cdot,\cdot\}_C}$ of the function $H_{|\R^+\times S^1\times S^1}$ 
with respect to the Jacobi structure on $C=\R^+\times S^1\times S^1$ described in (\ref{JC}).

\item 
{\bf The standard symmetry on the reduced space:} We may induce an $S^1$-action on the reduced space $\R^+\times S^1\times S^1$ whose infinitesimal generator is 
$$\xi_{\R^+\times S^1\times S^1}=\partial_\theta + \partial_{\theta'}.$$

\end{itemize}

Now, we apply the second step of the reduction process using this last symmetry, obtaining the reduction of the Kirillov structure by this standard symmetry. 
More precisely, the reduction of the Jacobi structure, because in this case the Kirillov line bundle is trivial. 

\begin{itemize}
\item {\bf The reduced space:}
We consider the diffeomorphism 
$$\begin{array}{rcl}
\R^+\times S^1\times S^1&\to& \R^+\times S^1\times S^1\\
(\rho',\theta,\theta')&\to&(\rho',\theta, \theta-\theta')
\end{array}
$$
which transforms $\xi_{\R^+\times S^1\times S^1}=\partial_\theta + \partial_{\theta'}$ into $\partial_\theta$. 
Therefore, the quotient space $(\R^+\times S^1\times S^1)/S^1$ may be identified with 
$$K=\R^+\times S^1\,,$$ 
so that $\wp_K: \R^+\times S^1\times S^1\to \R^+\times S^1$ is the map $\wp_K(\rho',\theta,\theta')=(\rho',\theta-\theta').$

In this case,  the line bundle associated with $\wp_K$ is trivial and we obtained a Jacobi structure. 
From (\ref{JC}) we deduce that the Jacobi structure on $K=\R^+\times S^1$ is 
\begin{equation}\label{JC3}
\Pi_K=-2\sin\sigma\partial_{\rho'}\wedge \partial_{\sigma},\qquad E_K=-2\cos\sigma\partial_{\rho'}+2\displaystyle\frac{\sin\sigma}{\rho'}\partial_{\sigma},
\end{equation}
with $(\rho',\sigma)$ polar coordinates on $\R^+\times S^1.$ Note that this Jacobi structure is just the one given in (\ref{JC2}). 

\item {\bf The reduced Hamiltonian function:} In this case,  the reduced Hamiltonian is 
$$H_{|\R^+\times S^1}(\rho',\sigma)=\displaystyle\frac{1}{2}((\rho')^2+1),$$ 
with $(\rho',\sigma)$ polar coordinates of $\R^+\times S^1.$ 

\item {\bf The reduced dynamics:} The reduced vector field is  the $\wp_K$-projection
 
$$
	(\wp_K)_{*}X_{H_{|\R^+\times S^1}}^{\{\cdot,\cdot\}_C}=(1+(\rho')^{2})\cos\sigma\,\partial_{\rho'}-\frac{1-(\rho')^{2}}{\rho'}\sin\sigma\,\partial_{\sigma}\,,
$$
which coincides precisely with the results obtained in Example~\ref{ex:2dho1},  using the reverse reduction process (see (\ref{12})).

%and~\eqref{eq:twiceprojectedisJH}.
\end{itemize}
	\end{example}

\begin{example}\label{ex:2dlinear2}{\bf Continuing Example \ref{ex:linear}: The linear Hamiltonian system reduced first by a scaling and then by a standard symmetry. }{\rm
We consider again the example of a  free and proper action $\Phi:G\times Q\to Q$  of a Lie group  $G$ on a manifold $Q$ with 
a  $G$-invariant vector field $Y\in {\mathfrak X}(Q).$ Then, we have two symmetries on $T^*Q-0_Q$: 
\begin{itemize}
\item The restriction $T^*\Phi:G\times (T^*Q-0_Q)\to (T^*Q-0_Q)$ to $T^*Q-0_Q$ of the cotangent lift of the action on $Q.$ 
\item  The scaling action $\phi:\R-\{0\}\times (T^*Q-0_Q)\to (T^*Q-0_Q)$ given by (\ref{action1}). 
\end{itemize}

In  Example~\ref{ex:linear} we have shown how to apply the reduction 
process by first using the standard symmetry and then the scaling symmetry. Now, we take the reverse order.

As seen in Example~\ref{ex:projective0}, by using first the scaling symmetry, we obtain the following reduced objects: 

\begin{itemize}
\item{\bf The reduced space:} It is the projective cotangent bundle
${\Bbb P}(T^*Q).$ Let  ${\mathcal D}$ be the contact distribution on ${\Bbb P}(T^*Q)$ such that ${\bf p}:{\mathcal D}^o-0_Q\to {\Bbb P}(T^*Q)$ 
is a principal bundle with real line bundle $\pi_{{\mathcal D}^o}: {\mathcal D}^o\to {\Bbb P}(T^*Q)$. 
The Kirillov bracket on the sections of $\pi_{({\mathcal D}^o)^*}: ({\mathcal D}^o)^*\to {\Bbb P}(T^*Q)$ satisfies 
$$[h_{X^\ell}, h_{Z^\ell}]_{({\mathcal D}^o)^*}=-h_{[X,Z]^\ell},$$
for all $X,Z\in {\mathfrak X}(Q)$. 

\item{\bf The reduced Hamiltonian section:} It is defined locally by (\ref{ell}). 

\item {\bf The reduced dynamics:} The Hamiltonian vector field $X^{\omega_Q}_{Y^\ell}\in {\mathfrak X}(T^*Q-0_Q)$ 
is ${\bf p}$-projectable and its projection is the symbol of the derivation $[\cdot, h_{Y^\ell}]_{({\mathcal D}^o)^*}.$

\item {\bf The standard symmetry on the reduced space:} The action is defined by 
$$G\times {\Bbb P}(T^*Q) \to {\Bbb P}(T^*Q),\qquad (g,{\bf p}(\alpha))\to {\bf p}((T^*\Phi)_g(\alpha)).$$
\end{itemize}

Now, we can consider the second step of the reduction process. 
The standard symmetry on the reduced space satisfies the conditions of Theorem \ref{Kirillov}, and therefore, we have
\begin{itemize}
\item {\bf The reduced space:} In this case,  the reduced space is the quotient space ${\Bbb P}(T^*Q)/G.$ 
Moreover, the projection ${\bf p:}{\mathcal D}^o-0\to {\Bbb P}(T^*Q)$ is $G$-invariant and it induces a reduced projection ${\bf p^G:}({\mathcal D}^o-0)/G\to {\Bbb P}(T^*Q)/G$. 
The real line bundle ${\bf \pi_{{\mathcal D}^o/G}:}L:={\mathcal D}^o/G\to K:={\Bbb P}(T^*Q)/G$ is deduced from the $G$-equivariant line bundle $\pi_{{\mathcal D}^o}.$ 
On the space of sections of the dual of this real bundle we have a Kirillov structure $[\cdot,\cdot]_{L^*}$ characterized by 
$$ [h_{X^\ell}^G, h_{Z^\ell}^G]_{L^*}=-h_{[X,Z]^\ell}^G,$$
 for $X,Z\in {\mathfrak X}(Q)$ $G$-invariant vector fields on $Q.$ 
 
 \item {\bf The reduced Hamiltonian section:} The section $h_{Y^\ell}$ of  $\pi_{({\mathcal D}^o)^*}: ({\mathcal D}^o)^*\to {\Bbb P}(T^*Q)$ is $G$-invariant and 
 therefore it induces a section $$h^G_{Y^\ell}:{\Bbb P}(T^*Q)/G\to ({\mathcal D}^o)^*/G.$$

\item {\bf The reduced dynamics:} The vector field ${\bf p}_*(X_{Y^\ell}^{\omega_Q})$ is $G$-invariant. Thus, it induces a vector field on ${\Bbb P}(T^*Q)/G,$ which is just the symbol of $[\cdot,h^G_{Y^\ell}]_{L^*}.$ 
\end{itemize}

\noindent {\bf The particular case of a Lie group.} When $Q=G$ is a Lie group,   
for the first reduction step with the scaling symmetry, we have (see Example \ref{ex:projective0}): 
\begin{itemize}
\item The reduced space is $G\times {\Bbb P}{\mathfrak g}^*.$ 
\item The contact structure is the distribution on $G\times {\Bbb P}{\mathfrak g}^*$ given by 
$${\mathcal D}_{(g,p(\mu))}=\left\langle(T_gL_{g^-1})^*(\mu)\right\rangle^o\times T_{p(\mu)}({\Bbb P}{\mathfrak g}^*)$$
for all $g\in G$ and $\mu\in {\mathfrak g}^*-\{0\}.$ 
\item 
The fiber of the real line bundle $\pi_{{\mathcal D}^o}: {{\mathcal D}^o}\to G\times {\Bbb P}{\mathfrak g}^*$  at $(g,p(\mu))\in G\times {\Bbb P}{\mathfrak g}^*$ is just 
$${{\mathcal D}^o}_{(g,p(\mu))}=\left\langle(T_gL_{g^-1})^*(\mu)\right\rangle.$$

\item The reduced Hamiltonian section of $\pi_{({\mathcal D}^o)^*}: ({{\mathcal D}^o})^*\to G\times {\Bbb P}{\mathfrak g}^*$ induced by the function  $Y^\ell$  is characterized by 
$$h_\xi(g,p(\mu))((T_gL_{g^{-1}})^*(\mu))=\mu(\xi),$$
with $g\in G$, $\mu\in {\mathfrak g}^*-\{0\}$, $\xi=Y(e)$ and  $p:{\mathfrak g}^*-\{0\}\to {\Bbb P}{\mathfrak g}^*$ the corresponding quotient map determined by the scaling symmetry on   
${\mathfrak g}^*-\{0\}.$ 
\item The reduced vector field after this reduction is $(Y, X_{h_\xi})\in {\mathfrak X}(G)\times {\mathfrak X}({\Bbb P}{\mathfrak g}^*),$ such that 
\begin{equation}\label{h}
X_{h_\xi}(f)\circ p=\{f\circ p, {\xi}^\ell\}_{{\mathfrak g}^*-\{0\}},
\end{equation}
which is the symbol of the derivation $[\cdot, h_\xi]_{({\mathcal D}^o)^*}.$ 
\end{itemize}

Now, if we perform the second reduction step  associated with the induced $G$-action 
$$G\times (G\times {\Bbb P}{\mathfrak g}^*)\to G\times {\Bbb P}{\mathfrak g}^*,\;\; (g', (g,p(\mu)))\to (gg',p(\mu)),$$
the corresponding reduced elements are: 

\begin{itemize}
\item The reduced space is the projective space ${\Bbb P}{\mathfrak g}^*$. 
\item The line vector bundle $\pi_L:L\to {\Bbb P}{\mathfrak g}^*$ is given by 
$$L_{p(\mu)}=\left\langle\mu\right\rangle,\;\; \mu\in {\mathfrak g}^*.$$
\item The reduced section of $\pi_{L^*}:L^*\to {\Bbb P}{\mathfrak g}^*$ is just 
$${h}^G_\xi(p(\mu))(t\mu)=t\mu(\xi).$$ 
\item The final reduced dynamics is the vector field $X_{h_\xi}$ on ${\Bbb P}{\mathfrak g}^*$ described in   (\ref{h}), 
which is the symbol of $[\cdot,{h}^G_\xi]_{L^*}$ and whose local expression is (\ref{21'}). 
\end{itemize}

	}\end{example}
	
So, also in this case, similarly to the two previous examples (see Examples~\ref{ex:2dho1},~\ref{ex:linear}, ~\ref{ex:2dho2} and~\ref{ex:2dlinear2}), 
both reduction processes give rise to the same reduced dynamics. This fact
motivates further analysis on the equivalence of the two reduction processes, which will be addressed in full generality in the following section.

\section{The equivalence of the two reduction processes}\label{sec:equiv}

Finally,  we will prove that both processes considered in Sections~\ref{section3} and~\ref{Section4} are equivalent. 
Let $(S,\omega,H)$ be a symplectic Hamiltonian system with a scaling symmetry $\phi^S:\R^\times \times S\to S$ and a symplectic $G$-symmetry 
$\Phi^S:G\times S\to S$ which are compatible,  $G$ being a Lie group. 
\begin{theorem}\label{equiv}{\rm
Under the previous conditions we have that:
\begin{enumerate}
\item  There exists  a real line bundle isomorphism $(\Psi,\psi)$   between the line bundles  $\pi_{ {L}}:L\to (S/G)/\R^\times$ and $\pi_{({\mathcal D}^o)/G}: ({\mathcal D}^o)/G\to (S/\R^\times)/G$

\medskip 

\hspace{80pt}
\xymatrix{ 
 {L}\ar[r]^\Psi\ar[d]^{\pi_{ {L}}} & \ar[d]^{{\pi_{{\mathcal D}^o/G}}}{\mathcal D}^o/G\\
(S/G)/\R^\times\ar[r]^{ \psi}  & (S/\R^\times)/G}

  \medskip 
  
 \item 
 The sections  $h_H^G\in \Gamma(({\mathcal D}^o)^*/G)$  and $h_{H^G}\in \Gamma( {L}^*)$ induced by the Hamiltonian function $H:S\to \R$  and obtained in 
 Theorem \ref{p2} and  Theorem \ref{2reduccion} respectively, are related as follows 
\begin{equation}\label{HG}
h_{H^G}=\Psi^*\circ h_H^G \circ \psi,
\end{equation}
where  $\Psi^*$ is the  dual isomorphism, between the line bundles $\pi_{({\mathcal D}^o)^*/G}$ and $\pi_{L^*}$, deduced from $\Psi.$ 

 \item The Kirillov structures  $[\cdot,\cdot]_{  L^*}$ and $[\cdot,\cdot]_{({\mathcal D}^o)^*/G}$ obtained in  Theorem \ref{2reduccion} and Theorem \ref{p2}  respectively, are isomorphic. 
 In fact, we have that  
 \begin{equation}\label{iso}
  [\Psi^*\circ {h_1^G}\circ \psi,\Psi^*\circ {h_2^G}\circ \psi]_{ {L}^*}=\Psi^*\circ [{h}_1^G,{h_2^G}]_{({\mathcal D}^o)^*/G}\circ \psi,
  \end{equation}
 for all ${h}_1, {h}_2$ $G$-invariant sections of the line bundle $\pi:{\mathcal D}^o\to S/\R^\times$.  

\item The vector fields $X_{h_{H^G}}^{[\cdot,\cdot]_{ {L}^*}}$ and  $X_{h_H^G}^{[\cdot,\cdot]_{({\mathcal D}^o)^*/G}}$ given in  
Theorem \ref{2reduccion} and Theorem \ref{p2} respectively,  are $\psi$-related, i.e.~the following diagram is commutative 
\[
\xymatrix{
(S/G)/\R^\times\ar[d]_{X_{h_{H^G}}^{[\cdot,\cdot]_{ {L}^*}}}\ar[rr]^{\psi}&&(S/\R^\times)/G\ar[d]^{X_{h_H^G}^{[\cdot,\cdot]_{({\mathcal D}^o)^*/G}}}\\
T((S/G)/\R^\times)\ar[rr]^{T\psi}&&T((S/\R^\times)/G)}
\]

\end{enumerate}}
 \end{theorem}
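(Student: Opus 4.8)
The plan is to build the line-bundle isomorphism $(\Psi,\psi)$ at the level of the symplectic manifold $S$ and then show all the structures descend compatibly. The starting point is that both final reduced manifolds are quotients of $S$ by the two commuting actions: $(S/G)/\R^\times$ and $(S/\R^\times)/G$. Since $\Phi^S$ and $\phi^S$ commute, the product action of $G\times\R^\times$ on $S$ is well-defined, and both iterated quotients are canonically diffeomorphic to $S/(G\times\R^\times)$; I would take $\psi$ to be this canonical diffeomorphism, characterized by $\psi\circ\wp_C\circ {\bf p_S}=\wp_K\circ{\bf p_P}$ (equivalently, $\psi$ sends the $G\times\R^\times$-orbit of $x\in S$, viewed as first a $G$-orbit then an $\R^\times$-orbit, to the same orbit viewed in the other order). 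For the bundle map $\Psi$, recall from Appendix A and Section 2 that both $\pi_L:L\to(S/G)/\R^\times$ and $\pi_{{\mathcal D}^o/G}:({\mathcal D}^o)/G\to(S/\R^\times)/G$ arise from the same $\R^\times$-principal bundle data on $S$: the line bundle associated with ${\bf p_P}:P\to K$ has, as its total space minus the zero section, the quotient $(S-?)/G$-type construction, while ${\mathcal D}^o-0_C\cong S$ as $\R^\times$-principal bundles over $C$. Concretely, the section of $\pi_L$ resp.\ of $\pi_{{\mathcal D}^o/G}$ corresponds bijectively (item (a) of Theorems \ref{reductionPoisson}, \ref{Kirillov}/\ref{p2}) to homogeneous $G$-invariant functions on $S$; I would \emph{define} $\Psi$ (or rather its dual $\Psi^*$) as the unique line-bundle isomorphism inducing the identity on these function spaces, and check it covers $\psi$. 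The commuting-actions hypothesis is exactly what makes ``homogeneous then $G$-invariant'' the same space as ``$G$-invariant then homogeneous.''

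With $(\Psi,\psi)$ in hand, items (2), (3), (4) become bookkeeping along a single identification. For (2): by construction $\Psi^*$ matches sections corresponding to the same homogeneous $G$-invariant function; $h_{H^G}$ corresponds to $H$ (via $H^G\circ\wp_S=H$ and then the homogeneous-function correspondence of Theorem \ref{reductionPoisson}(a)), while $h_H^G$ corresponds to $H$ as well (via the scaling reduction first, $h_H\leftrightarrow H$, then $G$-equivariant descent of Theorem \ref{p2}(1)); since $H$ is both $G$-invariant and homogeneous, the two match under $\Psi^*$, giving (\ref{HG}). For (3): both Kirillov brackets are characterized, through their respective correspondences, by $H_{[\cdot,\cdot]}=-\{H_\cdot,H_\cdot\}_S$ restricted to homogeneous $G$-invariant functions — see the defining relations in Theorem \ref{reductionPoisson}(b) composed with (\ref{hG}) on one side, and in Theorem \ref{p2}(2) composed with the contact Kirillov bracket (\ref{KS}) on the other. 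Both reduce to the \emph{same} Poisson bracket $\{\cdot,\cdot\}_S$ on the \emph{same} space of functions, so (\ref{iso}) follows from $\Psi^*$ intertwining the two identifications. For (4): the symbol vector fields are, in each reduction, the projection of $X_H^\omega$ (Theorem \ref{reductionPoisson}(c)/\ref{2reduccion}(3) and Theorem \ref{p2}(3)); since $\psi$ is the common quotient map $S\to S/(G\times\R^\times)$ followed by the two identifications, and $X_H^\omega$ is both $\wp_S$- and ${\bf p_S}$-projectable, the two projected vector fields are $\psi$-related — this is just functoriality of projection under the commuting quotients.

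I expect the main obstacle to be (1), specifically the clean construction of $\Psi$ as a \emph{line-bundle} isomorphism rather than merely a set-theoretic bijection of total spaces. One must verify it is fiberwise linear and smooth; the cleanest route is to exhibit $\Psi$ through the universal property of the associated line bundle applied to the $G\times\R^\times$-equivariant structure, or to trivialize locally over a $G\times\R^\times$-slice in $S$ and check the transition functions agree — using that the $\R^\times$-representation defining both line bundles is the same scaling representation $(s,t)\mapsto t/s$, and that the $G$-action on it is trivial on the $\R^\times$-factor because $G$ acts symplectically (hence fixes $\lambda=-i_\Delta\omega$, as noted in (\ref{inv-lambda})). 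Once the local triviality and $G$-invariance of $\lambda$ are combined, the cocycles defining $L$ and ${\mathcal D}^o/G$ over matching charts coincide, which produces $\Psi$ canonically and makes it evidently a line-bundle isomorphism covering $\psi$. The remaining verifications (2)--(4) then carry no genuine difficulty beyond tracking the correspondences of Theorems \ref{reductionPoisson}, \ref{Kirillov}, \ref{2reduccion} and \ref{p2} through this single identification.
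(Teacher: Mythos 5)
Your proposal is correct and, for items (1)--(3), follows essentially the same route as the paper: $\psi$ is the canonical identification of the two iterated quotients (the paper characterizes it by $\psi({\bf p_P}(\wp_S(x)))=\wp_C({\bf p_S}(x))$; note that your displayed formula $\psi\circ\wp_C\circ{\bf p_S}=\wp_K\circ{\bf p_P}$ does not typecheck --- the compositions are on the wrong sides --- although your verbal description of the map is the intended one), $\Psi$ is obtained by performing the two quotients of $S\times\R$ in either order (the paper's diagram (\ref{Psi}) is exactly your ``same associated-bundle data, same representation $(s,t)\mapsto t/s$, trivial $G$-action on the $\R$-factor'' observation made explicit), and (2) and (3) are verified precisely as you say, by pulling every section back to a homogeneous $G$-invariant function on $S$ and using the characterizations $H_{[h_1,h_2]}=-\{H_{h_1},H_{h_2}\}_S$ on both sides together with (\ref{pc=cp}). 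The one place you genuinely diverge is item (4): the paper argues intrinsically on the reduced side, expanding $[(f\circ\psi)(\Psi^*\circ h^G\circ\psi),h_{H^G}]_{L^*}$ by the Leibniz rule and comparing it, via (2) and (3), with the expansion of $[fh^G,h_H^G]_{({\mathcal D}^o)^*/G}$, so that the two symbols are identified without ever returning to $S$; you instead invoke that both symbols are the projections of $X_H^\omega$ along the two composite quotient maps ${\bf p_P}\circ\wp_S$ and $\wp_C\circ{\bf p_S}$, which is also valid --- and arguably more direct --- since Theorems \ref{2reduccion}(3) and \ref{p2}(3) assert exactly that, and $\psi$ intertwines the two composite projections. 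What the paper's algebraic argument buys is independence from the dynamical interpretation of the symbols (it only uses the already-proved items (2) and (3) and the axioms of a Kirillov bracket); what yours buys is brevity. Neither version has a gap.
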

 
 \begin{proof}
  $(1)$ The diffeomorphism $\psi$ is just 
 \begin{equation}\label{psi}
 \psi: (S/G)/\R^\times \to (S/\R^\times)/G,\qquad \psi({\bf p_P}(\wp_S(x)))=\wp_C({\bf p}_S(x)),\quad \mbox{ for all }x\in S,
 \end{equation}
 that is, 
 $$
 \xymatrix{ 
 {S}\ar[r]^{Id_S}\ar[d]^{\wp_S} & \ar[d]^{{\bf p_S}}S\\
 {P=S/G}\ar[d]^{{\bf p_P}} & \ar[d]^{\wp_C}C=S/\R^\times\\
(S/G)/\R^\times\ar[r]^{ \psi}  & (S/\R^\times)/G}
$$

 We remark that this map is a diffeomorphism from the equality  $\Phi_g^S\circ\phi_s^S=\phi^S_s\circ \Phi^S_g.$ 
 Moreover,  the diffeomorphism $\Psi$ is characterized in this diagram 
 \begin{equation}\label{Psi}
 \xymatrix{ 
 { S\times \R}\ar[r]^{Id_{S\times \R}}\ar[d]^{ \wp_S\times Id_{\R}} & \ar[d]^{{\bf p_{S\times \R}}} S\times \R\\
 {P\times \R=S/G\times \R}\ar[d]^{{\bf p_{P\times \R}}} & \ar[d]^{\wp_{(S\times \R)/\R^\times}}(S\times \R)/\R^\times\\
L=((S/G)\times \R)/\R^\times\ar[r]^{ \kern-10pt\Psi}  & ((S\times \R)/\R^\times)/G\cong {\mathcal D}^o/G}
\end{equation}
Here ${\bf p_{P\times \R}}$ is the quotient map deduced from the action 
$$\R^\times \times (P\times \R)\to (P\times \R),\qquad (s,(\wp_S(x),t))\to (\wp_S(sx),\frac{t}{s}),$$
and 
${\bf p_{S\times\R}}$ the quotient map deduced from the action 
$$\R^\times \times (S\times\R)\to (S\times\R ),\qquad (s,(x,t))\to (sx,\frac{t}{s}).$$

% \begin{equation}\label{Psi}
% \Psi({\bf p}_{P\times \R}(\wp_S(x),t))=\wp_{(S\times \R)/\R^\times}({\bf p}_{S\times \R}(x,t)),\;\; \mbox{ for all }(x,t)\in S\times \R,
% \end{equation} 

  \medskip 
 
 (2) From (\ref{hH}) in Appendix \ref{A},  we have  
 $${h}_{H^G}({\bf p_{P}}(\wp_S(x)))({\bf p_{P\times\R }}(\wp_S(x),t))=tH^G(\wp_S(x))=tH(x),$$ 
 for $x\in S$ and $t\in \R.$ 

On the other hand,  using (\ref{psi}), the diagram (\ref{Psi}) and again (\ref{hH}) in Appendix \ref{A}, we obtain 
$$
\begin{array}{rcl}
(\Psi^*\circ h_H^G\circ  \psi)({\bf p_{P}}(\wp_S(x)))({\bf p_{P\times \R}}(\wp_S(x),t))
=h_H({\bf p_S}(x))({\bf p_{S\times \R}}(x,t))=tH(x).
\end{array}$$

\medskip

(3) If $h_1,h_2$ are $G$-invariant
sections of the line bundle $\pi_{({\mathcal D}^o)^*}: ({\mathcal D}^o)^*\to C=S/\R^+,$ then from (2) in Theorem~\ref{p2} and (\ref{HG}), we deduce 
$$
\begin{array}{rcl}
H_{\Psi^*\circ [h_1^G,h_2^G]_{({\mathcal D}^o)^*/G}\circ \psi}\circ \wp_S&=&
H_{\Psi^*\circ [h_1,h_2]_{({\mathcal D}^o)^*}^G\circ \psi}\circ \wp_S=H^G_{[h_1,h_2]_{({\mathcal D}^o)^*}}\circ \wp_S\\[5pt]&=&
H_{[h_1,h_2]_{({\mathcal D}^o)^*}}=-\{H_{h_1},H_{h_2}\}_S.
\end{array}$$

On the other hand, using $b)$ in Theorem~\ref{reductionPoisson}, (\ref{pc=cp}) and (\ref{HG}), we have 
$$
\begin{array}{rcl}
H_{[\Psi^*\circ {h}_1^G\circ\psi ,\Psi^*\circ {h}_2^G\circ \psi]_{ {L}^*}}\circ \wp_S&=&
-\{H_{\Psi^*\circ {h}_1^G\circ \psi},H_{\Psi^*\circ {h}_2^G\circ \psi}\}_{P}\circ \wp_S=-\{H_{\Psi^*\circ {h}_1^G\circ \psi}\circ \wp_S, H_{\Psi^*\circ{h}_2^G\circ \psi}\circ \wp_S\}_S\\[5pt]&=&
-\{H_{h_1}^G\circ \wp_S,H_{h_2}^G\circ \wp_S\}_S=-\{H_{h_1},H_{h_2}\}_S.
\end{array}
$$

Therefore, we have (\ref{iso}). 

\medskip

(4) We consider the section  $\Psi^*\circ h^G\circ \psi\in \Gamma( {L}^*),$ with $h$ a $G$-invariant section on $\pi_{({\mathcal D}^o)^*}$ and $f\in C^\infty((S/\R^\times)/G).$ From the properties of the Kirillov structure $[\cdot,\cdot]_{L^*},$ we have that  
$$
\begin{array}{rcl}\label{u1}
{[ (f\circ \psi)(\Psi^*\circ h^G\circ \psi),h_{H^G}]_{ {L}^*}}&=& (f\circ\psi)[\Psi^*\circ h^G\circ \psi, h_{H^G}]_{ {L}^*} + X_{h_{H^G}}^{[\cdot,\cdot]_{L^*}}(f\circ \psi)(\Psi^*\circ h^G\circ \psi).
\end{array}$$

On the other hand, using (\ref{HG}) and (\ref{iso}), we obtain 

$$[ (f\circ \psi)(\Psi^*\circ h^G\circ \psi),h_{H^G}]_{ {L}^*}=[ \Psi^*\circ(fh^G)\circ \psi , \Psi^*\circ h_{H}^G\circ \psi]_{L^*}=\Psi^*\circ[fh^G ,h_{H}^G]_{ ({{\mathcal D}^o)}^*/G}\circ \psi,$$

$$(f\circ\psi)[\Psi^*\circ h^G\circ \psi, h_{H^G}]_{ {L}^*}=\Psi^*\circ(f[h^G ,h_{H}^G]_{ ({{\mathcal D}^o)}^*/G})\circ \psi.$$

Replacing these relations in (\ref{u1}),  we have that 
\begin{equation}\label{u2}
\Psi^*\circ[fh^G ,h_{H}^G]_{ ({{\mathcal D}^o)}^*/G}\circ \psi=\Psi^*\circ(f[h^G ,h_{H}^G]_{ ({{\mathcal D}^o)}^*/G})\circ \psi + X_{h_{H^G}}^{[\cdot,\cdot]_{L^*}}(f\circ \psi)(\Psi^*\circ h^G\circ \psi).\end{equation}

However, we know that
\begin{equation}\label{u3}
[fh^G ,h_{H}^G]_{ ({{\mathcal D}^o)}^*/G}=f[h^G, h_H^G]_{({\mathcal D}^o)^*/G} + X_{h_{H}^G}^{[\cdot,\cdot]_{({\mathcal D}^o)^*/G}}h^G.
\end{equation}

Comparing (\ref{u2}) and (\ref{u3}), we conclude $(4).$

\end{proof}

Both reduction processes and the corresponding equivalence between them are  summarized in the following diagram
 \[
\xymatrix{&\R&\R&\\
(P=S/G,\{\cdot,\cdot\}_P,X^{\{\cdot,\cdot\}_P}_{H^G})  \ar[dd]^{{\bf p_{P}}}\ar[ur]^{H^G}&(S,\omega,X_H^\omega)\ar[l]^{\kern30pt\wp_S}\ar[ur]^H\ar[r]^{\kern-20pt {\bf p_S}}&(C=S/\R^\times,{\mathcal D},X_{h_H}^{[\cdot,\cdot]_{({\mathcal D}^o)^*}})\ar[dd]^{\wp_C}\ar@/^/[dl]^{h_H}\\
&({\mathcal D}^o)^*\ar@<1ex>[ur]^{\pi_{({\mathcal D}^o)^*}}&\\ ((S/G)/\R^\times,[\cdot,\cdot]_{  L^*}, X_{h_{H^G}}^{[\cdot,\cdot]_{  L^*}})\ar@{<-->}[rr]^\cong \ar@/^/[dr]^{\kern10pt h_{H^G}}&&(C/G,[\cdot,\cdot]_{({\mathcal D}^o)^*/G}, X_{h_H^G}^{[\cdot,\cdot]_{({\mathcal D}^o)^*/G}})\ar@/^/[dl]^{h_H^G}&
\\&\;\;\;\;\;\;{ L}^*\cong \ar@<1ex>[ul]^{ \pi_{ L^*}}({\mathcal D}^o)^*/G\ar@<1ex>[ur]^{\pi_{({\mathcal D}^o)^*/G}}&&}
\]

\section{Reconstruction process for scaling symmetries}\label{sec:reconstruction}

In this section we will study the inverse process of reduction:
the \textit{reconstruction process}. First, we shall introduce the
general involved ideas, for arbitrary dynamical systems and Lie groups,
and then we shall concentrate on the case of symplectic Hamiltonian
systems with scaling symmetries.

\subsection{The general context}

Let $M$ be a manifold, $X\in\mathfrak{X}\left(M\right)$ a vector
field on $M$ and $G$ a Lie group acting on $M$ by an action $\phi^{M}:G\times M\rightarrow M$
such that $X$ is $G$-invariant. Assume that $\phi^{M}$ defines
a principal fiber bundle $p_{M}:M\rightarrow M/G$. In such a case,
the $G$-invariance of $X$ ensures that there exists a vector field
$X^G\in\mathfrak{X}\left(M/G\right)$ such that $X^G\circ p_{M}=Tp_{M}\circ X$.
The question is: how can we get the integral curves of $X$ from those
of $X^G$? To do that, we can proceed as follows. If we want the integral
curve $\Gamma:\left(-\epsilon,\epsilon\right)\rightarrow M$ of $X$
such that $\Gamma\left(0\right)=x_{0}$, then:
\begin{enumerate}
\item consider the integral curve $\gamma:\left(-\epsilon,\epsilon\right)\rightarrow M/G$
of $X^G$ such that $\gamma\left(0\right)=p_{M}\left(x_{0}\right)$;
\item fix a principal connection $A:TM\rightarrow\mathfrak{g}$ for $p_{M}$
(where $\mathfrak{g}$ is the Lie algebra of $G$) and fix a curve
$\varphi:\left(-\epsilon,\epsilon\right)\rightarrow M$ such that $\varphi\left(0\right)=x_{0}$,
\begin{equation}
A\left(\varphi'\left(t\right)\right)=0\quad\textrm{and}\quad p_{M}\left(\varphi\left(t\right)\right)=\gamma\left(t\right)\label{2-2}
\end{equation}
(in other words, $t\to \varphi(t)$ is the horizontal lift of the curve $\gamma$ by the principal connection $A$);

\item and find the curve $g:\left(-\epsilon,\epsilon\right)\rightarrow G$
such that
\begin{equation}
g'\left(t\right)=T_eL_{g\left(t\right)}\left[A\left(X\left(\varphi\left(t\right)\right)\right)\right],\quad g\left(0\right)=e.\label{3-2}
\end{equation}
\end{enumerate}
From now on, we shall take $\epsilon$ small enough in order to fulfill
above conditions. Then, proceeding as in \cite{AM} (see pages 304-305), one may prove that 
\[
\Gamma\left(t\right)=\phi^{M}\left(g\left(t\right),\varphi\left(t\right)\right)
\]
is the curve we are looking for. The above three-step procedure is
usually known as \textit{reconstruction}. The steps $2$ and $3$
are known as the \textit{reconstruction problem} (see,  for example \cite{MMR}).

Clearly, such a procedure can be used for the standard as well as for the scaling symmetries. 
In the following, we shall focus on the latter, since the reconstruction process for scaling symmetries, as far as the authors know, has not been studied in the literature so far.

\subsection{Application to scaling symmetries and symplectic Hamiltonian systems}

Now, as in Section \ref{ssp}, let us suppose that we have a scaling
symmetry $\phi:\R^{\times}\times S\to S$ on a symplectic Hamiltonian
system $(S,\omega,H)$, with infinitesimal generator $\Delta$. Then, assuming
that ${\bf p_{S}}:S\rightarrow C=S/\R^{\times}$ is a principal bundle
(see the first part of Section  \ref{ssp}), 
\begin{itemize}
\item we have a contact distribution ${\mathcal{D}}$ on $C$ and a related
real line bundle $\pi_{{\mathcal{D}}^{o}}:{\mathcal{D}}^{o}\to C$ with a
Kirillov structure $[\cdot,\cdot]_{({\mathcal{D}}^{o})^{*}}$, 
\item and we can ensure that the Hamiltonian vector field $X_{H}^{\omega}\in{\mathfrak{X}}(S)$
of $H$ projects onto the symbol $X_{h_{H}}^{[\cdot,\cdot]_{({\mathcal{D}}^{o})^{*}}}\in{\mathfrak{X}}(C)$
of the derivation $[\cdot,h_{H}]_{({\mathcal{D}}^{o})^{*}}.$
\end{itemize}
Recall that $h_{H}:C\rightarrow({\mathcal{D}}^{o})^{*}$ denotes
the section of $\Gamma(({\mathcal{D}}^{o})^{*})$ related to the homogeneous function $H$. So, we are in
the situation of the previous  subsection, with $M=S$, $X=X_{H}^{\omega}$,
$G=\R^{\times}$, $\mathfrak{g}=\R$ and $X^G=X_{h_{H}}^{[\cdot,\cdot]_{({\mathcal{D}}^{o})^{*}}}$.
We shall apply the reconstruction procedure described above in this
particular context. 

\subsubsection{Existence of a flat connection}

There is a case in which solving the reconstruction problem is especially
simple (as we will show later). This case is when there is a non-vanishing homogeneous function $F:S\rightarrow\R^{\times}$. This kind of functions are 
called \textit{scaling functions}~\cite{BJS}.
%This is when the following equivalent
%affirmations are true:
%\begin{itemize}
%\item The contact manifold $(C,{\mathcal{D}})$ is orientable.
%\item The real line bundle $\pi_{{\mathcal{D}}^{o}}:{\mathcal{D}}^{o}\to C$
%is (globally) trivial. 
%\item There is a (global) section $\sigma:C\rightarrow S\simeq{\mathcal{D}}^{o}-0_{C}$.
%\item The principal bundle $p_{S}:S\rightarrow C=S/\R^{\times}$ is (globally)
%trivial.
%\item There is a homogeneous function $\rho:S\rightarrow\R^{\times}$.
%\end{itemize}
%\begin{remark}
%\label{r1} {\rm Note that always there is a scaling }
%\end{remark}

In such a case  the map
\[
\left(F,{\bf p_{S}}\right):S\rightarrow\R^{\times}\times C
\]
is a diffeomorphism and defines a trivialization for ${\bf p_{S}}$. Its
inverse is given by
\[
\left(F,{\bf p_{S}}\right)^{-1}:\left(s,{\bf p_{S}}\left(x\right)\right)\in\R^{\times}\times C\rightarrow\phi\left(\frac{s}{F\left(x\right)},x\right)\in S,
\]
for all $s\in\R^{\times}$ and $x\in S$, and we have a global section
$\sigma:C\rightarrow S$ of ${\bf p_S}$ which takes the values
\begin{equation}\label{sigma}
\sigma\left(y\right)=\left(F,{\bf p_{S}}\right)^{-1}\left(1,y\right),\quad\forall y\in C.
\end{equation}

Conversely, if ${\bf p_S}:S\to C$ is trivial, i.e. $S\cong \R^\times \times C$ and ${\bf p_S}$ is the second projection,  
then the function $F:S\cong \R^\times \times C\to \R$ given by $F(s,x)=s$ is a non-vanishing homogenous function, i.e.~a scaling function. 

Therefore, the existence of a scaling function $F$ on $S$ is equivalent with the trivialization of the principal bundle ${\bf p_S}:S\to C.$ 
This fact guarantees the local existence of this kind of functions $F$  (see \cite{BJS}).

Moreover,  if $\Delta$ is the infinitesimal generator of $\phi$, since 
\[
dF\left(x\right)\left(\Delta\left(x\right)\right)=F\left(x\right)\neq0,\quad\forall x\in S,
\]
then  we have that 
$$TS=\left\langle \Delta\right\rangle\oplus \left \langle dF\right\rangle ^{o}.$$
So, the map $A:TS\rightarrow\R$, characterized by 
\begin{equation}
A\left(\Delta\left(x\right)\right)=\Delta(F)\left(x\right),\quad\forall x\in S,\label{ad}
\end{equation}
and
\begin{equation}
\ker A=\left\langle dF\right\rangle ^{o},\label{ka}
\end{equation}
is a principal \textit{flat connection} for ${\bf p_{S}}$ (because $\ker A$
is integrable). 

On the other hand, the $1$-form $\eta:=\sigma^*(\lambda)$ is a global generator
of ${\mathcal{D}}^{o}$ with $\lambda=-i_\Delta\omega,$ which makes $\pi_{{\mathcal{D}}^{o}}$ trivial. 
In fact, using (\ref{sigma}), we have that $\sigma\circ {\bf p_S}=\phi_{\frac{1}{F}}$. Therefore, 
$$ \left(\bf p_{S}\right)^{*}\eta=(\phi_{\frac{1}{F}})^*\lambda=(\phi\circ (\frac{1}{F},Id))^*\lambda.$$

Since $\lambda=-i_\Delta\omega, $ then $T_s^*\phi_x(\lambda(\phi(x,s))=0,$ for all $(s,x)\in \R^\times\times S.$ Thus, from the homogeneity of $\lambda,$ we have that 
$$\left((\bf p_{S}\right)^{*}\eta)(x)=(\frac{1}{F},Id)^*(0,(\phi_{\frac{1}{F(x)}})^*(\lambda(x))=\frac{1}{F(x)}\lambda(x).$$

In conclusion, we deduce that 
\begin{equation}
\left(\bf p_{S}\right)^{*}\eta=\displaystyle\frac{1}{F}\lambda.\label{psl}
\end{equation}
 This implies that ${\mathcal{D}}=\left\langle \eta\right\rangle ^{o}$, and $\eta$ is a contact $1$-form on $C.$ 
 
 The  one-to-one correspondence 
 between homogeneous functions $H:S\to \R$ on $S$ and functions $h_H:C\to \R$ on $C$ 
 (sections of the trivial line bundle $\pi_{({\mathcal D}^o)^*}$)  is defined by the relation 
 $$h_H\circ {\bf p_S}=\frac{1}{F}H.$$

% Moreover, we have a section $\hat{\eta}:C\rightarrow({\mathcal{D}}^{o})^{*}$  of $\pi_{({\mathcal{D}}^{o})^{*}}:({\mathcal{D}}^{o})^{*}\to C$ characterized by 
%\[
%\hat{\eta}\left(y\right)\left(\eta\left(y\right)\right)=1,\quad\forall y\in C.
%\]
%It is clear that the sections of $\pi_{({\mathcal{D}}^{o})^{*}}:({\mathcal{D}}^{o})^{*}\to C$
%are of the form
%\[
%h=\mathcal{H}\,\hat{\eta},\quad\textrm{with}\quad\mathcal{H}\in C^{\infty}\left(C\right).
%\]
%Hence, Eqs. \eqref{sec-do} and \eqref{psl} say that
%\[
%H_{h}\left(x\right)=h\left(p_{S}\left(x\right)\right)\left(\eta\left(p_{S}\left(x\right)\right)\right)\,\rho\left(x\right)=\mathcal{H}\left(p_{S}\left(x\right)\right)\,\rho\left(x\right),\quad\forall x\in S,
%\]
%i.e.
%\[
%\left.H_{h}\right/\rho=\mathcal{H}\circ p_{S},
%\]
Note that the function on $C$ associated with $F$ is just the constant function $1$.

The Jacobi bracket of  two functions $h_1,h_2$ on $C$ defined by the contact $1$-form $\eta$ is given by 
\begin{equation}
\left\{h_{1},h_{2}\right\}_C\circ {\bf p_{S}}=-\frac{1}{F}\left\{ F({h_1}\circ {\bf p_{S}}),F({h_2}\circ {\bf p_{S}})\right\} _{S}.\label{braH}
\end{equation}

%From all that, it can be shown that (see Eq. \eqref{xhe})
%\[
%X_{h_{H}}^{[\cdot,\cdot]_{({\mathcal{D}}^{o})^{*}}}=X_{\mathcal{H}}^{\eta}\quad\textrm{with}\quad\mathcal{H}\circ p_{S}=H/\rho,
%\]
%i.e. the projected vector field $X_{h_{H}}^{[\cdot,\cdot]_{({\mathcal{D}}^{o})^{*}}}$
%is in this case a contact Hamiltonian vector field, with respect to
%the contact form $\eta$, with Hamiltonian function $\mathcal{H}$.
The relation between the Hamiltonian vector field $X_{H}^{\omega}$  of $H$ with respect $\omega$ and the Hamiltonian vector field $X_{h_H}^{\eta}$ of $h_H$  
with respect to the contact structure $\eta$ is 
\[
T{\bf p_{S}}\circ X_{H}^{\omega}=X_{h_{H}}^{\eta}\circ {\bf p_{S}}.
\]

\begin{remark}
Since above equation is actually true for any homogeneous function
$H$, for $H=F$ we have that 
\begin{equation}
T{\bf p_{S}}\circ X_{F}^{\omega}=X_{1}^{\eta}\circ {\bf p_{S}}=\mathcal{R}\circ {\bf p_{S}},\label{pxxp}
\end{equation}
where $\mathcal{R}$ is the Reeb vector field of $\eta$.\bigskip{}
\end{remark}
Below, we shall use all these facts to address the reconstruction
problem for the system $\left(S,\omega,H\right)$ and the action~$\phi$. 

\subsubsection{Solving the reconstruction problem}

Suppose that we want to find the integral curve $\Gamma:\left(-\epsilon,\epsilon\right)\rightarrow S$
of $X_{H}^{\omega}$ such that $\Gamma\left(0\right)=x_{0}$. Following
the step $1$ of the reconstruction procedure, let us fix the integral
curve $\gamma:\left(-\epsilon,\epsilon\right)\rightarrow C=S/\R^{\times}$
of $X_{h_H}^{\eta}$ such that $\gamma\left(0\right)={\bf p_{S}}\left(x_{0}\right)$.
Now, we need to find the curves $\varphi\left(t\right)$ and $g\left(t\right)$
of steps $2$ and $3$. Consider the flat connection $A$ given by
\eqref{ad} and \eqref{ka}. Define
\[
\varphi\left(t\right):=\left(F,{\bf p_{S}}\right)^{-1}\left(s_{0},\gamma\left(t\right)\right),\quad\forall t\in\left(-\epsilon,\epsilon\right),
\]
with $s_{0}=F\left(x_{0}\right)$. Then, ${\bf p_{S}}\left(\varphi\left(t\right)\right)=\gamma\left(t\right)$
and $F\left(\varphi\left(t\right)\right)=s_{0}$. In particular, $\varphi\left(t\right)$
belongs to a level set of $F$ (of value $s_{0}\in\R^{\times}$),
and consequently its tangent vector belongs to $\left\langle dF\right\rangle ^{o}=\ker A$,
i.e.
\[
A\left(\varphi'\left(t\right)\right)=0\quad\forall t\in\left(-\epsilon,\epsilon\right).
\]
Then, the two parts of \eqref{2-2} are satisfied. Furthermore, since
$\gamma\left(0\right)={\bf p_{S}}\left(x_{0}\right)$, 
\[
\varphi\left(0\right)=\left(F,{\bf p_{S}}\right)^{-1}\left(s_{0},\gamma\left(0\right)\right)=\left(F,{\bf p_{S}}\right)^{-1}\left(F\left(x_{0}\right),{\bf p_{S}}\left(x_{0}\right)\right)=x_{0}.
\]
Thus, the step $2$ is complete.

In order to find the curve $g\left(t\right)$,
let us calculate $A\left(X_{H}^{\omega}\left(\varphi\left(t\right)\right)\right)$.
Using the decomposition $TS=\left\langle \Delta\right\rangle \oplus\left\langle dF\right\rangle ^{o}$, we have that 
\[
X_{H}^{\omega}=f\,\Delta+Z,
\]
with $f\in C^\infty(S)$ and $Z$ a vector field on $S$ such that $Z(F)=0.$
It follows that
\[
\left\{ F,H\right\} _{S}=X_{H}^{\omega}(F)=f\,\Delta(F)=f\,F,
\]
and consequently
\[
f=\frac{\{ F,H\} _{S}}{F}.
\]
Then
\[
A\circ X_{H}^{\omega}=f\,\Delta=\frac{\left\{ F,H\right\} _{S}}{F}\,\Delta.
\]
Writing $g\left(t\right)=\exp\left(\alpha\left(t\right)\right)$,
Eq.~\eqref{3-2} translates to
\[
\alpha'\left(t\right)=\frac{\left\{ H,F\right\} _{S}\left(\varphi\left(t\right)\right)}{s_{0}},\quad\alpha\left(0\right)=0,
\]
which has the solution
\[
\alpha\left(t\right)=\frac{1}{s_{0}}\int_{0}^{t}\left\{ H,F\right\} _{S}\left(\varphi\left(s\right)\right)\;ds.
\]
Summing up, the trajectory which we are looking for is
\begin{equation}
\Gamma\left(t\right)=\phi^{S}\left(\exp\left(\frac{1}{s_{0}}\int_{0}^{t}\left\{ H,F\right\} _{S}\left(\varphi\left(s\right)\right)\;ds\right),\varphi\left(t\right)\right),\quad\textrm{with}\quad \varphi\left(t\right)=\left(F,{\bf p_{S}}\right)^{-1}\left(s_{0},\gamma\left(t\right)\right).\label{exp}
\end{equation}

\begin{remark}
\label{r2} If $H$ itself is a scaling function (i.e.~$H\left(x\right)\neq 0$
for all $x$), then we can take $F=H$. In such a case $\left\{ H,F\right\} _{S}=0$
and consequently
\begin{equation}\label{G1}
\Gamma\left(t\right)=\varphi\left(t\right)=\left(H,{\bf p_{S}}\right)^{-1}\left(s_{0},\gamma\left(t\right)\right),
\end{equation}
where $s_{0}=H\left(x_{0}\right)$.
\end{remark}
Now, we shall construct an alternative expression of
$\Gamma\left(t\right)$, which involves the Reeb vector field $\mathcal{R}$
of $\left(C,\eta\right)$. Using \eqref{pxxp} and acting with the
first and last members on the differential of the contact Hamiltonian
function $h_H$ (related to $H$), it easily follows that
\[
\left\{ H,F\right\} _{S}=-F\,\left(\mathcal{R}\left(h_H\right)\circ {\bf p_{S}}\right).
\]
Then, 
\[
\frac{1}{s_{0}}\int_{0}^{t}\left\{ H,F\right\} _{S}\left(\varphi\left(s\right)\right)\;ds=-\int_{0}^{t}\mathcal{R}\left(h_H\right)\left(\gamma\left(s\right)\right)\;ds,
\]
and applying $\left(F,{\bf p_{S}}\right)$ on \eqref{exp} we have that
\begin{equation}
\left(F,{\bf p_{S}}\right)\left(\Gamma\left(t\right)\right)=\left(s_{0}\,\exp\left(-\int_{0}^{t}\mathcal{R}\left(h_H\right)\left(\gamma\left(s\right)\right)\;ds\right),\gamma\left(t\right)\right).\label{exp2}
\end{equation}
Thus, we have found, up to quadratures, the trajectories $\Gamma\left(t\right)$
of $X_{H}^{\omega}$ from the trajectories $\gamma\left(t\right)$
of $X_{h_H}^{\eta}$.
\begin{remark}

According to the local existence of scaling functions, if there is not a (global) scaling
function for $\phi^{S}$, then we can proceed as above around every
point $x\in S$, just replacing $S$ by an appropriate coordinate
neighborhood $U$ of $x_0$. In particular, we can obtain the result
of Remark \ref{r2} along the open submanifold of $S$ where $H\neq0$.
\end{remark}

To end this section, suppose that, instead of a symplectic Hamiltonian system, we have
a Poisson Hamiltonian system $(P,\Pi,H)$ with scaling symmetry $\phi^{P}:\R^{\times}\times P\to P$
such that ${\bf p_P}:P\rightarrow K=P/\R^{\times}$ is a principal
bundle. Assume that $F:P\rightarrow\R^{\times}$ is a scaling function
for $\phi^{P}$. Then, as we saw above, the related line bundle $\pi_{L}:L\to K$
is trivial (\textit{via} a global section as that given by \eqref{sigma}),
so the sections of $\pi_{L^{*}}$ can be identified with the functions
$h:K\rightarrow\mathbb{R}$, which in turn are in bijection with the
homogeneous functions $H:P\rightarrow\mathbb{R}$ through the equation
$h_{H}\circ{\bf p_P}=\frac{1}{F}H$. Also, the related Kirillov
bracket $\left[\cdot,\cdot\right]_{L^{*}}$ can be identified with
the Jacobi bracket $\left\{ \cdot,\cdot\right\} _{K}$ given by
\[
\left\{ h_{H_{1}},h_{H_{2}}\right\} _{K}\circ{\bf p_P}=-\frac{1}{F}\left\{ H_{1},H_{2}\right\} _{P}.
\]

Moreover, following the same calculations made along this section
for the symplectic case, given $x_{0}\in P$, we can construct the
trajectory $\Gamma\left(t\right)$ of $X_{H}^{\left\{ \cdot,\cdot\right\} _{P}}$
such that $\Gamma\left(0\right)=x_{0}$, in terms of the trajectory
$\gamma\left(t\right)$ of $X_{h_{H}}^{\left[\cdot,\cdot\right]_{L^{*}}}$such
that $\gamma\left(0\right)={\bf p_P}\left(x_{0}\right)$, through
the equation
\[
\left(F,{\bf p_P}\right)\left(\Gamma\left(t\right)\right)=\left(s_{0}\,\exp\left(-\int_{0}^{t}E\left(h_{H}\right)\left(\gamma\left(s\right)\right)\;ds\right),\gamma\left(t\right)\right),
\]
with $s_{0}=F\left(x_{0}\right)$ and $E\in\mathfrak{X}\left(K\right)$
such that $E\left(f\right)=\left\{ 1,f\right\} _{K}$.

\begin{example}{\bf The 2d harmonic oscillator. } {\rm If we consider the  local coordinates $(\rho,\theta,\rho',\theta')$ defined at the end 
of Example \ref{ex:STQho} on $\R^+ \times S^1\times \R^+ \times S^1\cong T^*(\R^2-\{(0,0)\})-0_{\R^2-\{(0,0)\}}$, then  we have that the local expression of the Hamiltonian function is just 
$$H(\rho,\theta,\rho',\theta')=\frac{1}{2}\rho^2(1 + (\rho')^2)$$
which is a scaling function. 

Moreover, the reduced space  is $\R^+  \times S^1\times  S^1$ and the principal bundle ${\bf p}:\R^+ \times S^1\times \R^+ \times S^1\to \R^+ \times S^1 \times S^1$ is given by ${\bf p}(\rho,\theta,\rho',\theta')=(\rho',\theta,\theta')$. Now, we will describe the integral curve $\Gamma:(-\epsilon,\epsilon)\to \R^+ \times S^1\times \R^+ \times S^1$ of 
$X_H^\omega$ with $\Gamma(0)=(\rho_0,\theta_0,\rho'_0,\theta'_0).$
Note that the inverse of the  diffeomophism 
$$(H,{\bf p}):\R^+ \times S^1\times \R^+\times S^1\to \R^+\times \R^+ \times S^1 \times S^1,\;\;\; 
(H,{\bf p})(\rho,\theta,\rho',\theta')=(\frac{1}{2}\rho^2(1 + (\rho')^2), \rho',\theta,\theta')$$
is 
$$(H,{\bf p})^{-1}:\R^+ \times \R^+\times S^1 \times S^1\to \R^+ \times S^1\times \R^+ \times S^1,\;\;\; 
(H,{\bf p})^{-1}(\rho,\rho',\theta,\theta')=(\sqrt\frac{2\rho}{1 + (\rho')^2}, \theta,\rho',\theta').$$

Therefore, the integral curve of   $X_H^\omega$ such that $\Gamma(0)=(\rho_0,\theta_0,\rho'_0,\theta'_0)$ is (see (\ref{G1}))

$$\Gamma(t)=\left(H,{\bf p}\right)^{-1}\left(\frac{1}{2}\rho_0^2(1 + (\rho_0')^2),\gamma\left(t\right)\right)=\left(\rho_0\sqrt
\frac{(1 + (\rho_0')^2)}{(1 + \rho'(t)^2)},\gamma(t)\right), $$
where  $\gamma(t)=(\theta(t),\rho'(t),\theta'(t))$ is the integral curve of the contact Hamiltonian vector field 
$$X_{h_H}^\eta=(1+(\rho')^2)\cos(\theta-\theta')\partial_{\rho'}+\sin(\theta-\theta')(\frac{1}{\rho'}\partial_{\theta'}+\rho'\partial_{\theta})$$
(see (\ref{p1})). 
 \hfill$\blacklozenge$}
\end{example}

\begin{example}{\bf The projective cotangent Hamiltonian system deduced from a standard linear Hamiltonian system. }
{\rm We consider Example \ref{ex:projective0} with $Y\in {\mathfrak X}(Q)$ a vector field on the manifold $Q$ of dimension $n.$ Let $U_{i_0}$ be the open subset of $T^*Q-0_Q$ 
given by
$$U_{i_0}=\{(q^1,\dots q^n,p_1,\dots p_{n})\in T^*Q-0_Q/p_{i_0}\not=0\},$$ 
with $(q^i,p_i)$ local coordinates on $T^*Q.$ The local expressions of the linear function $Y^\ell$ and of the corresponding Hamiltonian vector field $X^{\omega_Q}_{Y^\ell}$  are
$$Y^\ell(q,p)=Y^i(q)p_i \qquad\mbox{ and }\qquad X^{\omega_Q}_{Y^\ell}=Y^k\partial_{q^k} - p_j{\partial_{q^k}}Y^j\partial_{p_k},$$
 with  $Y(q)=Y^i(q){\partial_{ q^i}}.$ 
  
  Note that $Y^\ell$ is a scaling function if and only if $Y$ is a vector field without zeros. In any case, we have  a scaling function on $U_{i_0}$ given by
 $$F:U_{i_0}\to \R,\quad F(q^i,p_i)=p_{i_0}$$ 
   
  After the reduction process of the Hamiltonian symplectic  system $(T^*Q-0_Q,\omega_Q,H)$ by the scaling symmetry~(\ref{action1}), 
  we have that the local expressions of the reduced elements are:
\begin{itemize}
\item The local expression of the projective bundle  ${\bf p}:T^*Q-0_Q\to {\Bbb P}(T^*Q)$ on $U_{i_0}:$
$${\bf p}(q^1,\dots q^n, p_1,\dots p_n)=(q^1,\dots q^n,\frac{p_1}{p_{i_0}}, \dots,\frac{p_{i_0-1}}{p_{i_0}}, \frac{p_{i_0+1}}{p_{i_0}},\dots, \frac{p_n}{p_{i_0}}).$$

\item The contact distribution ${\mathcal D}$  on $U_{i_0}:$ 
$$
\begin{array}{rcl}
({\mathcal D}_{(q,\widetilde{p})})_{|{{\bf p}(U_{i_{0}})}}
=&&T_{(q,p)}{\bf p}(<p_idq^i>^o)=T_{(q,p)}{\bf p}<X_1,\dots,X_{i_0-1},X_{i_0+1},\dots, X_{n},
\partial_{p_1}, \dots, \partial_{p_n}>\\[5pt]
=&&<\widetilde{X}_1,\dots,\widetilde{X}_{i_0-1},\widetilde{X}_{i_0+1},\dots, \widetilde{X}_{n},
\partial_{\widetilde{p}_1},\dots, \partial_{ \widetilde{p}_{i_0-1}},\partial_{\widetilde{p}_{i_0+1}},\cdots,\partial_{\widetilde{p}_n}>,
\end{array}
$$
with $X_i=p_{i}\partial_{q^{i_{0}}}-p_{i_{0}}\partial_{q^{i}}$, 
$\widetilde{X}_i=\widetilde{p}_{i}\partial_{q^{i_{0}}}-{\widetilde{p}_{i_{0}}}\partial_{q^{i}}$ and 
$(q,\widetilde{p})=(q,\widetilde{p}_1, \cdots \widetilde{p}_{i_0-1}, \widetilde{p}_{i_0+1},\dots, \widetilde{p}_{n})$ local coordinates on ${\Bbb P}(T^*Q).$ 

The local expression of the line bundle $\pi_{{\mathcal D}^o}:{\mathcal D}^o\to  {\Bbb P}(T^*Q)$  on  $U_{i_0}$  is 
$$\pi_{{\mathcal D}^o}(q,\widetilde{p_i},t)=(q,\widetilde{p_i}).$$

\item The section $h_{Y^\ell}:{\Bbb P}(T^*Q)\to ({\mathcal D}^o)^* $ of $\pi_{({\mathcal D}^o)^*}:({\mathcal D}^o)^*\to  {\Bbb P}(T^*Q)$ associated with $Y^\ell:$  
\begin{equation}\label{ellp}
h_{Y^\ell}(q,\widetilde{p})(q,\widetilde{p},t)=Y^\ell(q,\widetilde{p}_1,\cdots, \widetilde{p}_{i_0-1}, t, \widetilde{p}_{i_0+1},\cdots, \widetilde{p}_{n})=Y^i(q)\widetilde{p}_i + Y^{i_0}(q)t.
\end{equation}

\item The ${\bf p}$-projection of the Hamiltonian vector field  $X^{\omega_Q}_{Y^\ell}\in {\mathfrak X}(T^*Q-0_Q)$   to ${\Bbb P}(T^*Q):$
\begin{equation}\label{Y}
Y^i\partial_{{q}^i}+ \big(\widetilde{p}_j(\widetilde{p}_i\partial_{{q}^{i_0}}Y^j - \partial_{{q}^i}Y^j)+ \widetilde{p}_i\partial_{q^{i_0}}Y^{i_0} -\partial_{{q}^i}Y^{i_0}
\big)\partial_{\widetilde{p}_i}.\end{equation}

\item The trivialization $(F,{\bf p}):T^*Q-0_Q\to \R^\times\times {\Bbb P}(T^*Q):$ 
$$(F,{\bf p}) (q^1,\dots ,q^n,p_1,\dots ,p_n)=(p_{i_0}, (q^1,\dots, q^n,\frac{p_1}{p_{i_0}}, \dots,\frac{p_{i_0-1}}{p_{i_0}}, \frac{p_{i_0+1}}{p_{i_0}},\dots, \frac{p_n}{p_{i_0}}))$$

and  its inverse map 
$$(F,{\bf p})^{-1} (s,({q}^1,\dots ,{q}^n,\widetilde{p}_1,\dots, \widetilde{p}_{i_0-1},\widetilde{p}_{i_0+1}, \dots, \widetilde{p}_n))= ({q}^1,\dots ,{q}^n,s\widetilde{p}_1,\dots, s\widetilde{p}_{i_0-1},s,s\widetilde{p}_{i_0+1}, \dots, s\widetilde{p}_n).$$
\end{itemize}
The integral curve $\Gamma:(-\epsilon,\epsilon)\to T^*Q-0_Q$ of the Hamiltonian vector field $X^{\omega_Q}_{Y^\ell}$ 
such that $\Gamma(0)=(q_0^i,p^0_i)$ is (see~(\ref{exp}))
$$
\Gamma\left(t\right)=(q^i(t), \exp\left(\frac{1}{p^0_{i_0}}\int_{0}^{t}(p_j\partial_{q^{i_0}} Y^j(q(s))ds)\right)({p}^0_{i_0}\widetilde{p}_1(t), \dots, {p}^0_{i_0}\widetilde{p}_{i_0-1}(t),{p}^0_{i_0},{p}^0_{i_0}\widetilde{p}_{i_0+1}(t),\dots, {p}^0_{i_0}\widetilde{p}_n(t)),
$$
where $\gamma(t)=(q^i(t),\widetilde{p}_1(t), \dots, \widetilde{p}_{i_0-1}(t),\widetilde{p}_{i_0+1}(t),\dots, \widetilde{p}_n(t))$ is an integral curve of the vector field given in (\ref{Y}) such that $$\gamma(0)=(q_0^i, \frac{p_1^0}{p_{i_0}^0},\dots,  \frac{p_{i_0}^0}{p_{i_0}^0},  \frac{p_{i_0+1}^0}{p_{i_0}^0}, \dots  ,\frac{p_n^0}{p_{i_0}^0}).$$

 \hfill$\blacklozenge$

 }
\end{example}

\section*{\textbf{Acknowledgments}}

\noindent The work of A.~Bravetti was partially supported by DGAPA-UNAM, program PAPIIT, Grant No.~IA-102823. 
S.~Grillo thanks SIIP-UNCuyo and CONICET, Argentina, for financial support. JC Marrero  and E.~Padr\'on acknowledge financial support from the Spanish Ministry of Science and Innovation under grant PID2022-137909NB-C22 and RED2022-134301-T (MCIN/AEI /10.13039/501100011033). Thanks  to  Maldacena program of visiting professors (JC Marrero) and  to the University of La Laguna (E.~Padr\'on)  for the financial support for a stay at the Balseiro Institute where this paper was finished.

\appendix

\section{Line bundles and $\R^\times$-principal bundles}\label{A}
{\rm 
Let  $p_M:M\to K$ be the principal bundle  associated with an action $\phi^M:\R^\times\times M\to M$  of the multiplicative group 
$\R^\times$ (with $\R^\times=\R-\{0\}$ or $\R^\times=\R^+$) on the manifold $M.$  Consider the representation $\R^\times\times \R\to \R$  of $\R^\times$ 
over the vector space of real numbers  given by $$(s,t)\to \frac{t}{s}.$$ 

%Denote by  $\pi_L:L\to K$ the line bundle associated (a vector bundle with fiber $\R$) with $p_M:M\to K$ and this representation  which is defined  as follows: 

Let $\tilde\phi^M:\R^\times \times(M\times \R)\to (M\times \R)$ be the action  of $\R^\times$ on the cartesian product $M\times \R$ given by 
\begin{equation}\label{acvec}
\tilde{\phi}^M(s,(x,t))=(\phi^M(s,x),\frac{t}{s})\mbox{ with }(s,(x,t))\in \R^\times \times (M\times \R).
\end{equation}

Then, the first projection $p_1:M\times \R\to M$ is an equivariant map with respect to the actions $\tilde{\phi}^M$ and $\phi^M$ and the map 
$\pi_L:L:=(M\times \R)/\R^\times\to K=M/\R^\times$ between the corresponding quotient spaces is a vector bundle with fiber $\R.$ 
It is {\it the  line bundle associated with $p_M:M\to K$ and the representation (\ref{acvec})}. 

If $0_L$ is the zero section of the vector bundle $\pi_L: L\to K$ and $\pi:M\times \R\to L=(M\times \R)/\R^\times $ is the quotient map,  
one can identify $M$ with $L-0_L$,  via the isomorphism of principal bundles
$$M\to (L-0_L),\qquad  x\in M \to \pi(x,1)\in L-0_L.$$

Conversely, if $\pi_L:L\to K$ is a line bundle (vector bundle with fiber $\R$) and $0_L$ is the zero section of $\pi_L$, then $p_M:M:=(L-0_L)\to K$ is a $\R^\times$-principal bundle.
 The action associated with this principal bundle is given by 
$$\phi^{M}:\R^\times \times (L-0_L)\to (L-0_L),\qquad \phi^M(s,x)=sx\,,$$
for $x\in L-0_L$,  and the line bundle associated with this principal bundle is isomorphic to $\pi_L.$ 
In fact, the $\R^\times$-invariant map 
$$ (L-0_L)\times \R\to L,\;\;\; (x,t)\to tx, \mbox{ with  } (x,t)\in  (L-0_L)\times \R,$$
induces an isomorphism between the line bundles $((L-0_L)\times \R)/\R^\times$ and $L.$ 

\begin{proposition}\label{1-1}\rm
Let $p_M:M\to K$ be a $\R^\times$-principal  bundle and $\pi_L:L\to K$ its associated  line bundle. Then, there is a one-to-one correspondence 
between the  sections $h:K\to L^*$  on the dual vector bundle of $\pi_L:L\to K$ and the homogeneous functions  on $M$, 
i.e.~functions $H:M\to \R$ satisfying  the condition 
$$H(\phi^M (s,x))=sH(x),\;\;\; \mbox{ for all } s\in \R^\times, x\in P,$$
where $\phi^M:\R^\times \times M\to M$ is the corresponding principal action. 
  \end{proposition}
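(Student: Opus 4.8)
The plan is to exhibit the bijection explicitly and then to check that it is compatible with the smooth structures. Throughout I will use the identification $M\cong L-0_L$ recalled above, under which a point $x\in M$ is regarded as the nonzero vector $\pi(x,1)\in L_{p_M(x)}$, together with the observation that, once a point $x$ in the fibre $p_M^{-1}(y)$ is fixed, every element of $L_y$ is uniquely of the form $\pi(x,t)=[(x,t)]$ for some $t\in\R$, and that in the vector bundle structure of $L$ one has $\pi(x,t_1)+\pi(x,t_2)=\pi(x,t_1+t_2)$ and $c\,\pi(x,t)=\pi(x,ct)$.

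First I would define, for a homogeneous function $H:M\to\R$, a section $h_H:K\to L^*$ by
\[
h_H(p_M(x))\big(\pi(x,t)\big)=t\,H(x),\qquad x\in M,\ t\in\R .
\]
The point to verify is well-definedness: if $\pi(x,t)=\pi(x',t')$ then $(x',t')=\tilde{\phi}^M(s,(x,t))=(\phi^M(s,x),t/s)$ for a unique $s\in\R^\times$, so $t'H(x')=(t/s)\,H(\phi^M(s,x))=(t/s)\,sH(x)=tH(x)$ by homogeneity of $H$; linearity in $t$ is clear, hence $h_H(y)\in L_y^*$. Conversely, for a section $h:K\to L^*$ I would set
\[
H_h:M\to\R,\qquad H_h(x)=\big\langle h(p_M(x)),\pi(x,1)\big\rangle ,
\]
i.e.\ the evaluation of $h$ at $x$ viewed as an element of $L-0_L$. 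Homogeneity of $H_h$ follows since $p_M(\phi^M(s,x))=p_M(x)$ and $\pi(\phi^M(s,x),1)=\pi(x,s)=s\,\pi(x,1)$ in $L_{p_M(x)}$, whence $H_h(\phi^M(s,x))=\langle h(p_M(x)),s\,\pi(x,1)\rangle=sH_h(x)$.

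Next I would check that the two constructions are mutually inverse: $H_{h_H}(x)=h_H(p_M(x))(\pi(x,1))=H(x)$, and $h_{H_h}(p_M(x))(\pi(x,t))=t\,H_h(x)=t\,\langle h(p_M(x)),\pi(x,1)\rangle=\langle h(p_M(x)),\pi(x,t)\rangle$, so $h_{H_h}=h$. Finally, for smoothness I would pass to a local trivialization $p_M^{-1}(U)\cong\R^\times\times U$, under which $L|_U\cong U\times\R$ with $\pi((s,u),t)=(u,t/s)$; then a homogeneous function corresponds to $(s,u)\mapsto s\,f(u)$ for some $f\in C^\infty(U)$, and its associated section of $L^*|_U\cong U\times\R$ is $u\mapsto(u,f(u))$, so both $H\mapsto h_H$ and $h\mapsto H_h$ preserve smoothness. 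I expect the only mildly delicate point to be the bookkeeping with the quotient identifications — in particular keeping straight that rescaling the vector $\pi(x,1)$ by $s$ amounts to moving the base point by $\phi^M(s,\cdot)$ — but there is no genuine obstacle: the statement is little more than a reformulation of the definition of the associated line bundle combined with the linearity of its dual.
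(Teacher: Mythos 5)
Your proposal is correct and follows essentially the same route as the paper: the same two explicit formulas $h_H(p_M(x))(\pi(x,t))=tH(x)$ and $H_h(x)=h(p_M(x))(\pi(x,1))$, with the same well-definedness and homogeneity checks. The additional verifications you include (mutual inverseness and smoothness in a local trivialization) are left implicit in the paper but are correctly carried out.
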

\begin{proof}

Indeed, if $h:K\to L^*$ is a section of $\pi_{L^*}:L^*\to K$ and $\pi:M\times \R\to L={(M\times \R)}/{\R^\times}$ is the canonical projection,  one can define the function 
\begin{equation}\label{Hh}
H_h: M\to \R,\;\;\; H_h(x)=h(p_M(x))(\pi(x,1)),\mbox{ for all }x\in M,
\end{equation}
which satisfies that
$$H_h(\phi^M(s,x))=h(p_M(x))(\pi(\phi^M(s,x),1))=h(p_M(x))(\pi(x,s))=h(p_M(x))(s\pi(x,1))=sH_h(x)$$
for $(s,x)\in \R^\times\times M.$ Therefore, $H_h$ is homogenous with respect to $\phi^M.$ 

Conversely, if $H:M\to \R$ is a homogenous function for the action $\phi^M$,  then we have  a section $h_H:K\to L^*$ of $\pi_{L^*}$ given by 
\begin{equation}\label{hH}
h_H(p_M(x))(\pi(x,t))=tH(x)\mbox{ for all }x\in M \mbox{ and } t\in \R,
\end{equation}
which is well-defined by the homogeneity of $H.$  
\end{proof}

%Consider two principal bundles $p_{M_1}:M_1\to K_1$ and $p_{M_2}:M_2\to K_2$ associated with the $\R^\times$-actions $\phi_1:\R^\times \times M_1\to M_1$ and $\phi_2:\R^\times \times M_2\to M_2,$ respectively. Suppose that $(\Psi: M_1\to M_2,\psi:K_1\to K_2)$ is an isomorphic of the principal bundle  
%\[
%\xymatrix{
%M_1\ar[d]^{\Psi}\ar[rr]^{p_1}&&K_1\ar[d]^{\psi}\\
%M_2\ar[rr]^{p_2}&&K_2}
%\]
%
%In the previous conditions, the line bundle $\pi_{L_1}:L_1\to K_1$ and  $\pi_{L_2}:L_2\to K_2$ associated to $p_1$ and $p_2$, respectively, are isomorphic vector bundles. In fact, we have 
%
%\[
%\xymatrix{
%L_1=(M_1\times \R)/\R^\times\ar[d]^{\tilde{\Psi}}\ar[rr]^{\;\;\;\;\;\;\;\;\;\;\;\;\;\;\pi_{L_1}}&&K_1\ar[d]^{{\psi}}\\
%L_2=(M_1\times \R)/\R^\times\ar[rr]^{\;\;\;\;\;\;\;\;\;\;\;\;\;\;\pi_{L_2}}&&K_2}
%\]
%where $\tilde{\Psi}$ is the map given by 
%
%$$\tilde{\Psi}(\pi_1(x,t))=\pi_2(\Psi(x),t)\;\;\; (x,t)\in M_1\times \R,$$
%with $\pi_1:(M_1\times \R)\to (M_1\times \R)/\R^\times$ and $\pi_2:(M_2\times \R)\to (M_2\times \R)/\R^\times$ are the quotient maps. 
%

%%%%%%%%%%%%%%%%%%%%%%%%%%%%%%%%%%%%%%%%

\end{document}